
\documentclass[11pt]{amsart}

\setlength{\hoffset}{-1cm}\setlength{\textwidth}{16cm}
\setlength{\textheight}{22cm}\setlength{\topmargin}{-.7cm}

\usepackage{amsmath, amssymb}
\usepackage{amsfonts}
\usepackage{graphicx}
\usepackage{mathrsfs}
\usepackage[arrow,matrix,curve,cmtip,ps]{xy}
\usepackage{amsthm}
\usepackage{enumerate}
\usepackage{color}
\usepackage[colorlinks]{hyperref}
\usepackage{tikz}

\allowdisplaybreaks

\newtheorem{thm}{Theorem}[section]
\newtheorem{lem}[thm]{Lemma}
\newtheorem{prop}[thm]{Proposition}
\newtheorem{cor}[thm]{Corollary}
\newtheorem*{theorem*}{Theorem}
\theoremstyle{remark}
\newtheorem{rem}[thm]{Remark}

\newtheorem{defn}[thm]{Definition}
\newtheorem{example}[thm]{Example}

\numberwithin{equation}{section}


\newcommand{\om}{\omega}

\begin{document}
\title[Self-similar ultragraph $C^*$-algebras]{Self-similar group actions on ultragraphs and associated $C^*$-algebras}

\author[H.Larki and N. Rajabzadeh-Hasiri]{Hossein Larki and Najmeh Rajabzadeh-Hasiri}

\address{Department of Mathematics\\
Faculty of Mathematical Sciences and Computer\\
Shahid Chamran University of Ahvaz\\
P.O. Box: 83151-61357\\
Ahvaz\\
 Iran}
\email{h.larki@scu.ac.ir, rajabzadeh.najmeh1396@gmail.com}


\date{\today}

\subjclass[2010]{46L05}

\keywords{Ultragraph, self-similar group action, inverse semigroup, tight groupoid}

\begin{abstract}
As a generalization of the Exel-Pardo's notion of self-similar graph, we introduce self-similar group actions on ultragraphs and their $C^*$-algebras. We then approach to the $C^*$-algebras by inverse semigroup and tight groupoid models.
\end{abstract}

\maketitle


\section{Introduction}

Ultragraph $C^*$-algebras were originally introduced by Tomforde \cite{ana00} to give a unified framework for the graph $C^*$-algebras \cite{nek04,li19} and Exel-Laca algebras \cite{li18}. An ultragraph is in fact a generalized (directed) graph in which the source of each edge is a nonempty set of vertices rather than a single vertex. So, we expect that many ideas and results for the graph $C^*$-algebras in the literature could be extended (with more effort) to the ultragraph setting \cite{pin13,nek05,ana00,anal09}. Furthermore, the algebraic analogue of the ultragraph $C^*$-algebras, namely ultragraph Leavitt path algebras, has attracted attension in recent years (see \cite{dlpo890,bed17,li1345tuih,li1p0987} among others).

The notion of self-similar graph $C^*$-algebras was introduced by Exel and Pardo \cite{bro14} to bring two important classes of $C^*$-algebras, the Katsura's \cite{li1po09} and Nekrashevych's algebras \cite{exe18} under one theory like graph $C^*$-algebras. In particular, it contains all UCT Kirchberg $C^*$-algebras \cite{bro14} and gives examples of important groupoid $C^*$-algebras with non-Hausdorff underlying groupoid \cite{li12}.

The aim of this article is to generalize the Exel-Pardo $C^*$-algebras \cite{bro14} to self-similar ultragraphs, and then give an inverse semigroup model for them. Note that self-similar graphs $(G,E,\varphi)$ in \cite{bro14} are assumed to satisfy
$$\varphi(g,e)\cdot s(e)=g \cdot s(e)$$
for all $g\in G$ and $e \in E^1$. This condition is necessary to define a cyclic action of the group $G$ on the path space $E^*$ of $E$ by
$$g \cdot (\alpha\beta)=(g\cdot \alpha)(\varphi(g,\alpha)\cdot \beta)$$
where $\alpha,\beta \in E^*$ with $s(\alpha)=r(\beta)$. However, for the self-similar ultragraphs we just need
\begin{equation}\label{eq1}
\varphi(g,e)\cdot s(e)\subseteq g\cdot s(e),
\end{equation}
because the source of each edge is a set of vertices (see Remark \ref{rem3.3} below). In particular, by this weaker condition, we can easily construct finite self-similar ultragraphs which are not presented by self-similar graphs of \cite{bro14}. Furthermore, we believe that our construction of self-similar ultragraph $C^*$-algebras can be also modified and extended to the labelled graph setting. (A labelled graph is another generalization of directed graphs in which both source and range of each edge are set of vertices.)

This article is organized as follows. After reviewing necessary background in Section \ref{sec2}, we introduce a self-similar ultragraph system $(G,\mathcal{U},\varphi)$ in Section \ref{sec30}, which is a self-similar group action $G  \curvearrowright   \mathcal{U}$ on an ultragraph $\mathcal{U}=(U^0,\mathcal{U}^1,r,s)$ equipped with a $1$-cocycle $G \times \mathcal{U}^1 \longrightarrow G$ satisfying condition (\ref{eq1}) above, and we then associate to $(G,\mathcal{U},\varphi)$ a universal $C^*$-algebra ${\mathcal{O}}_{G,\mathcal{U}}$ satisfying certain conditions. In Section \ref{sec4}, we consider some clases of examples. First, we show that for source-finite self-similar ultragraphs with a condition, the $C^*$-algebras ${\mathcal{O}}_{G,\mathcal{U}}$ are included in the Exel-Pardo $C^*$-algebras of self-similar graphs. However, general $C^*$-algebras ${\mathcal{O}}_{G,\mathcal{U}}$ are not; we construct a simple finite self-similar ultragraph which is not of the form self-similar graphs in \cite{bro14}. Next, we define a self-similar analogue of Exel-Laca algebras \cite{li18} as another class of examples.

 In Section \ref{sec5}, for each self-similar ultragraph $(G,\mathcal{U},\varphi)$ we define an inverse semigroup ${\mathcal{S}}_{G,\mathcal{U}}$, and the rest of article is devoted to explain and analyze this inverse semigroup. Since ultragraphs contain generalized sets of vertices instead of singletons, elements of ${\mathcal{S}}_{G,\mathcal{U}}$ are of the form $(\alpha,A,g,\beta)$, while we have triples $(\alpha,g,\beta)$ in ${\mathcal{S}}_{G,E}$ of \cite{bro14}. It turns out in particular that ${\mathcal{S}}_{G,\mathcal{U}}$ has a more complicated idempotent semilattice $\mathcal{E}({\mathcal{S}}_{G,\mathcal{U}})$, among others. So, after recalling the description of the tight filter space ${\widehat{\mathcal{E}}}_{\mathrm{tight}}({\mathcal{S}}_{G,\mathcal{U}})$ from \cite{bed17} and the construction of the tight groupoid ${\mathcal{G }_{\mathrm{tight}}}({\mathcal{S}_{G,\mathcal{U}}})$ in Section \ref{sec80}, we compute in Section \ref{sec800} the action ${\mathcal{S}}_{G,\mathcal{U}}     \curvearrowright     {\widehat{\mathcal{E}}}_{\mathrm{tight}}({\mathcal{S}}_{G,\mathcal{U}})$ carefully, which may be of interest even for ordinary ultragraphs $\mathcal{U}$. Then in Section \ref{sec8}, we prove that the canonical representation $\pi : {\mathcal{S}}_{G,\mathcal{U}}\longrightarrow {\mathcal{O}}_{G,\mathcal{U}}$ is a universal tight one, and therefore $ {\mathcal{O}}_{G,\mathcal{U}}\cong C^*_{\mathrm{tight}}({\mathcal{S}}_{G,\mathcal{U}})\cong  C^*({\mathcal{G }_{\mathrm{tight}}}({\mathcal{S}_{G,\mathcal{U}}}))$.

Finally, in the last two sections, we characterize the Hausdorffness of the groupoid ${\mathcal{G }_{\mathrm{tight}}}({\mathcal{S}_{G,\mathcal{U}}})$ and the $E^*$-unitary property of ${\mathcal{S}}_{G,\mathcal{U}}$ which generalize know results of \cite{bro14}. We should notice that structural properties such as minimality and effectiveness of ${\mathcal{G }_{\mathrm{tight}}}({\mathcal{S}_{G,\mathcal{U}}})$ are more complicated than those in the self-similar graph setting, and we leave them to future works.


\section{Preliminaries}\label{sec2}

In this section we recall the necessary background for the rest of the paper.

\subsection{Ultragraphs}
An $\textit{ultragraph}$  is a quadruple $\mathcal{U}=(U^0,\mathcal{U}^1,r,s)$, where $U^0$ denoted the set of vertices, $\mathcal{U}^1$ is the set of edges and $r,s:\mathcal{U}^1\rightarrow P({U}^0)\setminus\{\varnothing\}$, describing the range and source maps, where $P({U}^0)$  is the power set of ${U}^0$ and $r(e)$ is a singleton for every $e\in{\mathcal{U}^1}$. A {\it{source}} in $\mathcal{U}$ is a vertex $v\in{U}^0$ satisfying $\ r^{-1}(\{v\})=\varnothing$. Also, we say $\mathcal{U}$ is $\textit{row-finite}$ whenever each vertex receives at most finitely many edges, that is $\ r^{-1}(\{v\}) $ is a finite set for every $v \in{U^0}$.

\begin{rem}\label{rem2.2}
We prefer to reverse the edge's direction of ultragraphs in the literature \cite{bed17,pin13,ana00}, which will be compatible with self-similar graphs and $k$-graphs in \cite{bro14,bur81,opli9,cla15,cla18,lar2023}.
\end{rem}

{\bf Standing assumption.} Throughout the paper, we will assume that all ultragraphs $\mathcal{U}$ are row-finite and without any source, meaning that the set $r^{-1}(\{v\})=\{e\in \mathcal{U}^1 \;:\; r(e)=\{v\}\}$ is nonempty and finite for all $v\in U^0$.\\

Let $\mathcal{U}$ be an ultragraph. We denote by ${\mathcal{U}}^0$ the smallest subset of $P({U}^0)$ containing $\varnothing$, $\{v\}$ for all $v\in{U}^0$,  $s(e)$ for all $e\in{\mathcal{U}}^1$, and being closed under the  finite set operations $\cap$, $\cup$ and $\backslash$. A  \textit{finite path} in ${\mathcal{U}}$ is either an element of ${\mathcal{U}}^0$ or a sequence of edges $\alpha= e_1\cdots  e_n$ in ${\mathcal{U}}^1$ such that $r(e_{i+1})\subseteq  s(e_i)$ for $1\leq i\leq n$. Then the {\it{range}} of  $\alpha$ is defined by $r(\alpha)=r(e_1)$, the {\it{source}} of $\alpha$ by $s(\alpha)=s(e_n)$, and the {\it{length}} of $\alpha$ by $|\alpha|=n$. Each element ${A}\in{\mathcal{U}}^0$ will be usually considered as a path of length zero in which case we set $r(A)=s(A)=A$.  The set of all finite paths in $\mathcal{U}$ is denoted by ${\mathcal{U}}^*=\cup_{n=o}^{\infty}{\mathcal{U}}^n$. An \textit{infinite path} in  $\mathcal{U}$ is an infinite sequence of edges $\alpha=e_1e_{2} \cdots$, where $r(e_{i+1})\subseteq  s(e_i)$ for all $i\geq1$. The set of infinite paths in $\mathcal{U}$ is denoted by ${\mathcal{U}}^\infty$. If $\alpha$ and $\beta$ are two paths in $\mathcal{U}^{\geq 1}$ such that $r(\beta)\subseteq  s(\alpha)$, we will denote by $\alpha\beta$ the path obtained by juxtaposing $\alpha$ and $\beta$.

\begin{rem}
In \cite{ana00}, Tomforde defined ${\mathcal{U}}^0$ to be the smallest subset of $P({\mathcal{U}}^0)$ containing $\{\{v\},\; s(e)\;:\; v\in U^0 \; \text{and} \; e \in  {\mathcal{U}}^1\}$, which is only closed under the operation $\cap$ and $\cup$. As noted in \cite[Remark 2.5]{li19-boundary}, the further assumption that ${\mathcal{U}}^0$ is closed under the set minus operation dose not change the generated $C^*$-algebra by a Cuntz-Krieger ${\mathcal{U}}$-family. However, it sometimes makes our arguments simpler; in particular in the proofs in Section \ref{sec8}.
\end{rem}

\begin{defn}[\cite{ana00}]\label{defn2.3}
Let $\mathcal{U}$ be a row-finite source-free ultragraph. A {\it Cuntz-Krieger $\mathcal{U}$-family} is a collection of partial isometries $\{s_e:e\in {\mathcal{U}}^{1}\}$ with mutually orthogonal ranges and collection of projections
 $\{p_A:A\in{\mathcal{U}}^{0}\}$ satisfying
\begin{enumerate}[(CK1)]
  \item $p_{\emptyset}=0,\;p_{A}p_{B}=p_{A\cap{B}}$ and $p_{A\cup{B}}=p_{A}+p_{B}-p_{A\cap{B}}$ for all $A,B\in {\mathcal{U}}^{0}$,
  \item $s_e^*s_e=p_{s(e)}$ for all $e\in {\mathcal{U}}^1$,
  \item $s_es_e^*\leq p_{r(e)}$ for all $e\in {\mathcal{U}}^1$, and
  \item $p_{\{v\}}=\sum_{r(e)=\{v\}}s_e s_e^*$ for all $v\in {U}^0$.
\end{enumerate}
\end{defn}

Recall from \cite[Lemma 2.8]{ana00} that if $A\in{\mathcal{U}}^0$  and $e\in{\mathcal{U}}^1$, then
$$p_As_e=\left\{
           \begin{array}{ll}
             s_e & \text{if } r(e)\subseteq  A \\
             0 & \text{otherwise}
           \end{array}
         \right.
\text{ and }
s^*_ep_A=\left\{
  \begin{array}{ll}
    s^*_e & \text{if}\;r(e)\subseteq  A\\
    0 & \text{otherwise}.
  \end{array}
\right.
$$
Moreover, we write $s_\alpha:=s_{e_1} \ldots s_{e_n}$ if $\alpha=e_1 \ldots e_n\in{\mathcal{U}}^n$, and $s_\alpha:=p_A$
if $\alpha=A\in{\mathcal{U}}^0$; so $p_A s_{\alpha}=p_A s_{e_1} \ldots s_{e_n}=s_{\alpha}$ whenever $\alpha=e_1 \ldots e_n$ with $r(\alpha)\subseteq A$, while it is zero otherwise.

\subsection{Groupoid $C^*$-algebras}

Throughout the paper, we consider a specific type of topological groupoids, called \emph{tight groupoids of germs}, and here we recall a brief introduction of topological groupoids and their $C^*$-algebras. For more details, see \cite{don14,exe08}.

A {\it groupoid} is a small category $\mathcal{G}$ with an inverse map $\lambda\mapsto \lambda^{-1}$,  the range map $r(\lambda)=\lambda \lambda^{-1}$ and also the source map $s(\lambda)=\lambda^{-1}\lambda$ for all $\lambda\in \mathcal{G}$, satisfying $r(\lambda)\lambda=\lambda=\lambda s(\lambda)$. Therefore, for every morphisms $\lambda,\eta\in \mathcal{G}$, the composition $\lambda \eta$ is well-defined if and only if $s(\lambda)=r(\eta)$. Moreover, the set of identity morphisms is called {\it the unit space of $\mathcal{G}$}, that is $\mathcal{G}^{(0)}:=\{\lambda^{-1}\lambda:\lambda\in \mathcal{G}\}$.

We will only deal with {\it {topological groupoids}} $\mathcal{G}$ in which multiplication, inverse, range and source maps are continuous. In this case, $\mathcal{G}$ is called an {\it \'{e}tale groupoid} if its topology is locally compact and $r,s$ are local homeomorphisms. A {\it bisection} is a subset $B\subseteq \mathcal{G}$ such that the restricted maps $r|_B$ and $s|_B$ are both homeomorphisms. Also $\mathcal{G}$ is {\it ample} in case the topology on $\mathcal{G}$ is generated by a basis of compact open bisections.

Since all tight groupoids of inverse semigroups in this paper are ample, we recall the full and reduced $C^*$-algebras associated to  ample groupoids. Fixing an ample groupoid $\mathcal{G}$, write $C_c(\mathcal{G})$ for the $*$-algebra of compactly supported continuous functions $f:\mathcal{G} \rightarrow \mathbb{C}$ with the convolution multiplication and the involution $f^*(\lambda):=\overline{f(\lambda^{-1})}$. For any unit $u\in \mathcal{G}^{(0)}$, define the left regular $*$-representation $\pi_u:C_c(\mathcal{G})\rightarrow B(\ell^2(\mathcal{G}_u))$ with
$$\pi_u(f)\delta_\lambda:=\sum_{s(\eta)=r(\lambda)}f(\eta) \delta_{\eta \lambda} \hspace{5mm} (f\in C_c(\mathcal{G}),~\lambda\in \mathcal{G}_u),$$
where $\mathcal{G}_u:=s^{-1}(\{u\})$. Then the {\it reduced $C^*$-algebra $C^*_r(\mathcal{G})$} is the completion of $C_c(\mathcal{G})$ by reduced $C^*$-norm
$$\|f\|_r:=\sup_{u\in \mathcal{G}^{(0)}}\|\pi_u(f)\|.$$
There is also a {\it full $C^*$-algebra $C^*(\mathcal{G})$} associated to $\mathcal{G}$, which is the completion of $C_c(\mathcal{G})$ taken over all $\|.\|_{C_c(\mathcal{G})}$-decreasing representations of $\mathcal{G}$. Hence, $C^*_r(\mathcal{G})$ is a quotient of $C^*(\mathcal{G})$, and by \cite[Proposition 6.1.8]{don14} they are equal whenever $\mathcal{G}$ is amenable.

\subsection{Inverse semigroups and their tight groupoids}\label{sub2.3}

An \emph{inverse semigroup} is a semigroup $S$ which for each $s\in S$, there is a unique $s^*\in S$ satisfying
$$s=s s^* s \hspace{5mm} \mathrm{and} \hspace{5mm} s^*=s^*s s^*.$$
In particular, we have $e^*=e$ for all idempotents in $S$ because $e=eee$. We denote the set of all idempotents in $S$ by $\mathcal{E}(S)$, which is a commutative semilattice by defining $e \wedge f:=ef$. For $e,f\in \mathcal{E}(S)$, we say that $e$ intersects $f$, denoted by $e\Cap f$, if $ef\neq 0$. There is a natural partial order on $S$ by $s\leq t$ if $s=te$ for some $e\in \mathcal{E}(S)$. According to this partial order, for any $s\in S$, we will use the notation
$$s\uparrow:=\{t\in S:t\geq s\}.$$

\begin{defn}
Let $S$ be an inverse semigroup containing a zero element $0$ (in the sense $0s=s0=0$ for every $s\in S$). A {\it filter} in $\mathcal{E}(S)$ is a nonempty set $\mathcal{F}\subseteq \mathcal{E}(S)$ such that it is closed under multiplication and that if $s \in \mathcal{F} $ then $ s\uparrow \subseteq \mathcal{F}$. An {\it ultrafilter} is a proper maximal filter in $\mathcal{E}(S)$. In addition the set of all filters (ultrafilters) without 0 in $\mathcal{E}(S)$ is denoted by $\widehat{\mathcal{E}}_0(S)$ ($\widehat{\mathcal{E}}_\infty(S)$ respectively).
\end{defn}

Note that we can equip $\widehat{\mathcal{E}}_0(S)$ with a locally compact and Hausdorff topology generated by neighborhoods of the form
$$N(e;e_1,\ldots ,e_n):=\{\mathcal{F}\in \widehat{\mathcal{E}}_0(S): e\in \mathcal{F}, e_1,\ldots,e_n\notin \mathcal{F}\} \hspace{5mm} (e,e_1,\ldots,e_n\in \mathcal{E}(S)).$$

\begin{defn}[\cite{exe21}]
The {\it  tight filter space } (or {\it tight spectrum}) of $S$ denoted by $\widehat{\mathcal{E}}_{\mathrm{tight}}(S)$, is the closure of $\widehat{\mathcal{E}}_\infty(S)$ in $\widehat{\mathcal{E}_0}(S)$ and each filter in $\widehat{\mathcal{E}}_{\mathrm{tight}}(S)$ is called a {\it tight filter}.
We may consider $\widehat{\mathcal{E}}_{\mathrm{tight}}(S)$ as a topological space equipped with the restricted topology from $\widehat{\mathcal{E}_0}(S)$.
\end{defn}

Following \cite{exe21}, there is a natural action of $S$ on $\widehat{\mathcal{E}}_{\mathrm{tight}}(S)$, and hence one may construct its groupiod of germs. Briefly, for any $e\in \mathcal{E}(S)$, let
$$D^e:=\{\mathcal{F}\in \widehat{\mathcal{E}}_{\mathrm{tight}}(S): e\in \mathcal{F}\}.$$
Then the action $\theta: S\curvearrowright \widehat{\mathcal{E}}_{\mathrm{tight}}(S)$, $s\mapsto \theta_s$, is defined in terms of the maps $\theta_s:D^{s^* s}\rightarrow D^{s s^*}$ by $\theta_s(\mathcal{F})=(s\mathcal{F}s^*)\uparrow$. Consider the set
$$S*\widehat{\mathcal{E}}_{\mathrm{tight}}(S)=\{(s,\mathcal{F})\in S\times \widehat{\mathcal{E}}_{\mathrm{tight}}(S): s^*s\in \mathcal{F}\}$$
and define $\thicksim$ on $S*\widehat{\mathcal{E}}_{\mathrm{tight}}(S)$ by $(s,\mathcal{F})\sim (t,\mathcal{F}')$ whenever $\mathcal{F}=\mathcal{F}'$ and $se=te$
for some $e\in \mathcal{F}$. For each $(s,\mathcal{F})$, write $[s,\mathcal{F}]$ for the equivalent class of $(s,\mathcal{F})$. Then the {\it tight groupoid associated to $S$} is the groupoid $\mathcal{G}_{\mathrm{tight}}(S)=S*\widehat{\mathcal{E}}_{\mathrm{tight}}(S)/\sim$ with the multiplication
$$[t,\theta_s(\mathcal{F})][s,\mathcal{F}]:=[ts,\mathcal{F}]$$
and the inverse map
$$[s,\mathcal{F}]^{-1}:=[s^*,\theta_s(\mathcal{F})].$$
In addition, the source and  range maps are
$$ s([s,\mathcal{F}])=[s^*s,\mathcal{F}]  \hspace{5mm} \mathrm{and} \hspace{5mm}  r([s,\mathcal{F}])=[ss^*,\theta_s(\mathcal{F})] $$
and we may identify the unit space of $\mathcal{G}_{\mathrm{tight}}(S)$ with $\widehat{\mathcal{E}}_{\mathrm{tight}}(S)$ that $[e,\mathcal{F}]\mapsto \mathcal{F}$.\\


\section{Self-similar ultragraphs and their  $C^*$-algebras}\label{sec30}

In this section, we introduce self-similar ultragraphs and associated $C^*$-algebras. They contain naturally the class of self-similar graph $C^*$-algebras \cite{bro14}, and we will see in the next section the containment is proper.

 Let $\mathcal{U}=(U^0,\mathcal{U}^1,r,s)$ be an ultragraph. By an \textit{automorphism} on $\mathcal{U}$, we mean a bijective map
$$\sigma:{U}^0\sqcup{\mathcal{U}}^1\rightarrow{U}^0\sqcup{\mathcal{U}}^1$$
 such that $\sigma(U^0)\subseteq{U^0}$,
$\sigma({\mathcal{U}}^1)\subseteq{{\mathcal{U}}^1}$ and moreover that $r(\sigma(e))=\sigma(r(e))$,  $s(\sigma(e))=\sigma(s(e))$ for all $e\in{\mathcal{U}}^1 $. Then the collection of all automorphisms of $\mathcal{U}$ forms a group under composition. Let $G$ be a contable discrete group. An {\it{action}} $G\curvearrowright \mathcal{U}$
 is a map $G \times (U^0\sqcup {\mathcal{U}}^1)\rightarrow U^0\sqcup {\mathcal{U}}^1$, denoted by $(g,a)\mapsto g\cdot a$, such that the action of each $g\in G$ on $\mathcal{U}$ gives an ultragraph automorphism. In this case, we define
 $${g \cdot A}:=\left\{g \cdot v\;\;:{v}\in{A}\right\}$$
for ${g}\in{G}$ and $A\in {\mathcal{U}}^o.$

\begin{lem}\label{lem3.1}
Let $G\curvearrowright \mathcal{U}$ be an action of $G$ on $ \mathcal{U}$ as above. For every $g\in{G}$  and $A \in {\mathcal{U}^0}$, we have $g \cdot A \in {\mathcal{U}^0}$ as well.
\end{lem}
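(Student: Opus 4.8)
The plan is to exploit the fact that $\mathcal{U}^0$ is, by definition, the \emph{smallest} subcollection of $P(U^0)$ that contains $\varnothing$, the singletons $\{v\}$, and the sets $s(e)$, and is closed under $\cap$, $\cup$ and $\backslash$. Accordingly I would run a structural induction over this generating procedure, packaged as a minimality argument: fix $g\in G$, set
\[
\mathcal{C}:=\{A\subseteq U^0 : g\cdot A\in\mathcal{U}^0\},
\]
and aim to prove $\mathcal{U}^0\subseteq\mathcal{C}$, which is exactly the assertion that $g\cdot A\in\mathcal{U}^0$ for every $A\in\mathcal{U}^0$.

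First I would record the elementary but crucial observation that, because the action of $g$ restricts to a \emph{bijection} of $U^0$ onto itself, the induced map $A\mapsto g\cdot A$ on $P(U^0)$ is injective and commutes with the three set operations; that is, $g\cdot(A\cap B)=(g\cdot A)\cap(g\cdot B)$, $g\cdot(A\cup B)=(g\cdot A)\cup(g\cdot B)$ and $g\cdot(A\backslash B)=(g\cdot A)\backslash(g\cdot B)$. The only one of these genuinely needing injectivity is the set-minus identity, and it is a one-line check comparing elements on both sides.

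With this in hand I would verify the two hypotheses of the minimality principle for $\mathcal{C}$. For the generators: $g\cdot\varnothing=\varnothing\in\mathcal{U}^0$; next $g\cdot\{v\}=\{g\cdot v\}\in\mathcal{U}^0$, since $g\cdot v\in U^0$ because the automorphism given by $g$ preserves $U^0$; and $g\cdot s(e)=s(g\cdot e)\in\mathcal{U}^0$, using the automorphism axiom $s(\sigma(e))=\sigma(s(e))$ (which reads $s(g\cdot e)=g\cdot s(e)$ here) together with $g\cdot e\in\mathcal{U}^1$. For closure: if $A,B\in\mathcal{C}$ then $g\cdot A,g\cdot B\in\mathcal{U}^0$, and the distributivity identities above show $g\cdot(A\cap B),\,g\cdot(A\cup B),\,g\cdot(A\backslash B)\in\mathcal{U}^0$ because $\mathcal{U}^0$ is closed under $\cap,\cup,\backslash$; hence $A\cap B,\,A\cup B,\,A\backslash B\in\mathcal{C}$. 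Thus $\mathcal{C}$ contains every generator of $\mathcal{U}^0$ and is closed under the three operations, so minimality forces $\mathcal{U}^0\subseteq\mathcal{C}$, and the lemma follows.

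I do not anticipate a genuine obstacle here; the proof is a routine structural induction. The only points requiring a little care are making sure that $A\mapsto g\cdot A$ really commutes with $\backslash$ (this is where injectivity of $g$ on $U^0$ enters, and it would fail for a non-injective map), and correctly translating the phrase ``$\mathcal{U}^0$ is the smallest such family'' into the usable statement that any subcollection of $P(U^0)$ containing the generators and closed under $\cap,\cup,\backslash$ must contain $\mathcal{U}^0$.
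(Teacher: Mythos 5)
Your proof is correct and follows essentially the same route as the paper: the paper likewise observes that $A\mapsto g\cdot A$ commutes with $\cap,\cup,\backslash$ and sends generators to generators, then invokes the fact that every element of $\mathcal{U}^0$ is built from the generators by finitely many such operations. Your version merely packages the paper's informal structural induction as a clean minimality argument (and correctly flags the injectivity needed for the set-minus identity, which the paper leaves implicit).
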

\begin{proof}
The set operations $\cap,\cup$, and $\backslash$ are preserved by the maps $A \longmapsto g \cdot A$; for example, if $A=s(e)\cap s(f)$ then
\begin{align}
g\cdot A&=g\cdot ( s(e)\cap s(f)  )\nonumber\\
&=( g\cdot  s(e)) \cap  ( g\cdot s(f) )\nonumber\\
&=s(g\cdot e)\cap s(g\cdot f),\nonumber
\end{align}
or similarly, if $A=s(e) \backslash  s(f) $ then
$$g\cdot A=g \cdot (  s(e)\backslash  s(f)  )=s(g\cdot e)\backslash s(g\cdot f).$$
Moreover, by definition, every $A \in {\mathcal{U}}^0$ is constructed from $\{ v,s(e)\; :\; v\in U^0, e\in   {\mathcal{U}}^1 \}$ via finite operations $\cap,\cup$, and $\backslash$. Combining these observations proves the statement.
\end{proof}

We now define a self-similar ultragraph.

\begin{defn}\label{defn3.2}
Let $\mathcal{U}$ be a row-finite ultragraph without sources and $G$ a discrete group with identity ${{1}_G}$. A \textit{self-similar ultragraph} is a triple $(G,\mathcal{U},\varphi)$ such that

\begin{enumerate}
  \item $\mathcal{U}$ is an ultragraph,
  \item $G$ acts on $\mathcal{U}$ by automorphisms, and
  \item $\varphi:G\times \mathcal{U}^1\rightarrow G$ is a 1-cocycle for $G\curvearrowright \mathcal{U}$ satisfying

\begin{enumerate}
\item $\varphi(gh,e)=\varphi(g,h\cdot e)\varphi(h,e)$\quad (the 1-cocycle property),
 \item $\varphi(g,e)\cdot s(e)  \subseteq g\cdot s(e)$
\end{enumerate}
for all $g,h\in G$ and $e\in \mathcal{U}^1.$
\end{enumerate}
\end{defn}

\begin{rem}\label{rem3.3}
Condition (3)(b) above is necessary for making cycle change of the group action by the map $\varphi$ in the definition
$$g \cdot (ef):=(g\cdot e)(\varphi(g,e)\cdot f)\qquad (r(f)\subseteq s(e)).$$
In the case of \cite{bro14} for self-similar graphs, since the sources are also singletons, this condition would be $\varphi(g,e)\cdot s(e)=g \cdot s(e)$ for $g\in G$ and $e\in E^1$. However, we may have (even finite) self-similar ultragraphs which is not considered in \cite{bro14} (see Subsection \ref{sub4.1}).
\end{rem}

\begin{example}\label{ex3.4}
Let $G=\mathbb{Z}$ and let $\mathcal{U}=( U^0, {\mathcal{U}}^1 , r,s )$ be the ultragraph

\begin{center}
\begin{tikzpicture}[thick]
\node (...) at (-1.5,0) {$\cdots$};
\node (v_{-1}) at (0,0) {$v_{-1}$};
\node (v_0) at (1.5,0) {$v_0$};
\node (v_1) at (3,0) {$v_1$};
\node (v_2) at (4.5,0) {$v_2$};
\node (v_3) at (6,0) {$v_3$};

\node (00) at (7.5,0) {$\cdots$};
\node (0) at (6,-0.5) {$\cdots$};

\path[->] (v_0) edge[bend right] node[right=1pt]  {$$} (v_{-1});
\path[->] (v_1) edge[bend right] node[above=2pt]  {$e_{-1}$} (v_{-1});
\path[->] (v_2) edge[bend right]   (v_{-1});
\path[->] (v_2) edge[bend right] node[right=1pt]  {$$} (v_1);
\path[->] (v_1) edge[bend left] node[left=1pt]  {$$} (v_0);
\path[->] (v_2) edge[bend left] node[above=-17pt]  {$e_0$} (v_0);
\path[->] (v_3) edge[bend right] node[above=2pt]  {$e_1$} (v_1);
\path[->] (00) edge[bend right] node[above=2pt]  {$$} (v_1);
\path[->] (v_3) edge[bend left] (v_0);
\end{tikzpicture}
\end{center}
where $ U^0=\{v_i \;:\; i\in \mathbb{Z}\}$ and ${\mathcal{U}}^1=\{e_i \;:\;  i\in \mathbb{Z}\}$ such that for each $i\in \mathbb{Z}$,
$$r(e_i)=\{v_i\} \;\;\text{     and     }\; \;s(e_i)=\{v_j \;:\; \;j>  i\}.$$
If the action $\mathbb{Z}\curvearrowright {\mathcal{U}}$ defined by
$$n \cdot v_i=v_{i+n} \;\;\text{     and     }\; \;n \cdot e_i=e_{i+n}\;\; (\forall \; i,n \in \mathbb{Z}),$$
and moreover $\varphi:  \mathbb{Z}\times {\mathcal{U}}^1 \rightarrow \mathbb{Z}$ is a $1$-cocycle satisfying
$$\varphi(n,e_i)\cdot s(e_i) \subseteq n \cdot s(e_i)\;\; (\forall \; i,n \in \mathbb{Z})$$
(for example the identity one $\varphi(n,e_i)=n)$, then $(\mathbb{Z},{\mathcal{U}},\varphi  )$ is a self-similar ultragraph.
\end{example}

\begin{example}\label{ex3.5}
Let ${\mathcal{U}}$ be the ultragraph

\begin{center}
\begin{tikzpicture}[thick]
\node (v) at (-1.5,0) {$v$};
\node (w) at (2.5,0) {$w$};

\path[<-] (v) edge[bend left] node [above=1pt]{$e$} (w);
\path[<-] (w) edge[bend left] node [above=-17pt]{$f$} (v);

\draw[<-] (w) ..controls (4,-1) and (4,1) .. node[right=1pt] {$f$} (w);

\draw[->] (v) ..controls (-3,-1) and (-3,1) .. node[left=1pt] {$e$} (v);
\end{tikzpicture}
\end{center}
and $G=\mathbb{Z}_{2}=\{0,1\}$. Define the action $\mathbb{Z}_{2}$ on ${\mathcal{U}}$ by $0 \cdot \alpha=\alpha$ for all $\alpha \in U^0 \cup {\mathcal{U}}^1$ and
$$1\cdot v=w , \;\;\; \;\;\;\;1\cdot w=v,$$
$$1\cdot e=f \;\;\text{and} \;\; 1\cdot f=e.$$
If $\varphi:  \mathbb{Z}_2 \times {\mathcal{U}}^1 \rightarrow \mathbb{Z}_2$ is a $1$-cocycle, then $(\mathbb{Z}_2,\mathcal{U},\varphi)$ would be a self-similar ultragraph.
\end{example}

If $(G,\mathcal{U},\varphi)$ is a self-similar ultragraph, we may extend the action $G\curvearrowright \mathcal{U}$ and the cocycle $\varphi$ on all ${\mathcal{U}}^*$ and also ${\mathcal{U}}^{\infty}$  as follows. First, for every $g\in{G}$ and $A\in{{\mathcal{U}}^0}$, set $\varphi(g,A)=g$. Thus, given any $\alpha=\alpha_1\alpha_2\in{\mathcal{U}}^*$ and $g\in {G}$, we define inductively
$$g\cdot \alpha:=(g\cdot\alpha_1)(\varphi(g,\alpha_1)\cdot \alpha_2)$$
and
$$\varphi(g,\alpha):=\varphi(\varphi(g,\alpha_1),\alpha_2).$$
In the same way, for every  $g\in{G}$ and $x=\alpha_1\alpha_2\cdots \in{\mathcal{U}}^{\infty}$ we can define
$$g\cdot x:=(g\cdot\alpha_1)(\varphi(g,\alpha_1)\cdot\alpha_2)(\varphi(g,\alpha_1\alpha_2)\cdot\alpha_3)\cdots$$
which again belongs to ${\mathcal{U}}^{\infty}$ (see \cite[Proposition 2.5]{bro14}). As pointed in Remark \ref{rem3.3},  Definition \ref{defn3.2}(3) insures that this definition are well-defined. Another useful property of $\varphi$  is the  following.

\begin{lem}\label{lem3.3}
For every $g\in G$ and $\alpha\in{\mathcal{U}}^*$, one has
$$\varphi(g^{-1},\alpha)=\varphi(g,g^{-1}\cdot \alpha)^{-1}.$$
\end{lem}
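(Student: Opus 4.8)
The plan is to deduce the identity from two auxiliary properties of the extended cocycle, both proved by induction on the length of the path. Namely, I would first establish, for every path $\alpha \in \mathcal{U}^*$,
\begin{enumerate}
\item[(A)] $\varphi(1_G, \alpha) = 1_G$, and
\item[(B)] $\varphi(gh, \alpha) = \varphi(g, h\cdot \alpha)\,\varphi(h, \alpha)$ for all $g, h \in G$;
\end{enumerate}
that is, the normalization $\varphi(1_G,\cdot)=1_G$ and the $1$-cocycle identity of Definition \ref{defn3.2}(3)(a) both survive the extension from $\mathcal{U}^1$ to all of $\mathcal{U}^*$. Granting these, the lemma is immediate: taking $h = g^{-1}$ in (B) gives $\varphi(1_G, \alpha) = \varphi(g, g^{-1}\cdot\alpha)\,\varphi(g^{-1},\alpha)$, and then (A) forces the left-hand side to be $1_G$, so $\varphi(g^{-1},\alpha) = \varphi(g, g^{-1}\cdot\alpha)^{-1}$, as claimed.

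For the base values I would first record that $\varphi(1_G, e) = 1_G$ for every edge $e$: applying the edge cocycle identity with $g = h = 1_G$ yields $\varphi(1_G, e) = \varphi(1_G, e)^2$ in $G$, whence $\varphi(1_G, e) = 1_G$. Property (A) then follows by induction, writing $\alpha = e\beta$ and using $\varphi(1_G, e\beta) = \varphi(\varphi(1_G, e), \beta) = \varphi(1_G, \beta)$ together with the inductive hypothesis, the length-zero case being the definition $\varphi(1_G, A) = 1_G$. Property (B) I would prove similarly by induction, the length-zero case being $\varphi(gh, A) = gh = \varphi(g, h\cdot A)\varphi(h, A)$ (since $h\cdot A \in \mathcal{U}^0$ by Lemma \ref{lem3.1}, so $\varphi(g,h\cdot A)=g$ and $\varphi(h,A)=h$) and the length-one case being exactly Definition \ref{defn3.2}(3)(a).

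The heart of the argument, and the step I expect to require the most care, is the inductive step of (B). Writing $\alpha = e\beta$ with $e \in \mathcal{U}^1$, I would expand
$$\varphi(gh, e\beta) = \varphi\big(\varphi(gh, e), \beta\big) = \varphi\big(\varphi(g, h\cdot e)\varphi(h, e),\, \beta\big)$$
using the edge identity, then apply the inductive hypothesis to $\beta$ with the two group elements $\varphi(g, h\cdot e)$ and $\varphi(h, e)$, obtaining the product $\varphi\big(\varphi(g, h\cdot e),\, \varphi(h,e)\cdot\beta\big)\,\varphi\big(\varphi(h,e),\beta\big)$. The second factor is $\varphi(h, e\beta)$ by definition, and the delicate point is recognizing the first factor as $\varphi(g, h\cdot\alpha)$: this requires unwinding $h\cdot\alpha = (h\cdot e)(\varphi(h,e)\cdot\beta)$ and matching it against the recursion $\varphi\big(g, (h\cdot e)\gamma\big) = \varphi\big(\varphi(g, h\cdot e), \gamma\big)$ with $\gamma = \varphi(h,e)\cdot\beta$. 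Note that this route never invokes the fact that the extended action is a genuine left $G$-action (i.e. $g\cdot(h\cdot\alpha) = (gh)\cdot\alpha$); all that is used is the one-step recursion defining $g\cdot\alpha$ and $\varphi(g,\alpha)$, which keeps the argument self-contained. For bookkeeping cleanliness I would run all the inductions with base cases of length $\le 1$ and take the inductive step over $\alpha = e\beta$ with $|\beta| \ge 1$, so as to sidestep the degenerate concatenation of a single edge with a length-zero path.
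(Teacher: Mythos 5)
Your proof is correct and follows essentially the same route as the paper: the paper's entire argument is the single chain $1_G=\varphi(1_G,\alpha)=\varphi(gg^{-1},\alpha)=\varphi(g,g^{-1}\cdot\alpha)\varphi(g^{-1},\alpha)$, which is exactly your final step. The only difference is that you explicitly verify by induction that the normalization $\varphi(1_G,\cdot)=1_G$ and the $1$-cocycle identity survive the extension from $\mathcal{U}^1$ to $\mathcal{U}^*$, facts the paper uses without comment; your inductive step for (B), including the identification of $\varphi(g,h\cdot(e\beta))$ via $h\cdot(e\beta)=(h\cdot e)(\varphi(h,e)\cdot\beta)$, checks out.
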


\begin{proof}
It follows immediately from
$$1_G=\varphi(1_G,\alpha)=\varphi(gg^{-1}, \alpha)=\varphi(g,g^{-1} \cdot \alpha)\varphi(g^{-1}, \alpha).$$
\end{proof}

Next, we associate a $C^*$-algebra to a self-similar ultragraph.

\begin{defn}\label{defn3.4}
Let $(G,\mathcal{U},\varphi)$ be a self-similar ultragraph as in Definition $\ref{defn3.2}$. A \textit{$(G,\mathcal{U})$-\textit{family}} is a collection of the form
$$\{p_{A},s_e:A\in {\mathcal{U}}^0,\; e\in {\mathcal{U}}^1\}\cup \{u_{A,g} :A\in {\mathcal{U}}^0, g\in G\}  $$
in a $C^*$-algebra  satisfying the following relations:

\begin{enumerate}
  \item $\{p_{A},s_e:A\in {\mathcal{U}}^0,\; e\in {\mathcal{U}}^1\}$ is a Cuntz-Krieger $\mathcal{U}$-family,
  \item $u_{A,{{1}_G}}=p_A$ for all $A\in {\mathcal{U}}^0$,
  \item $(u_{A,g})^*=u_{{{g^{-1}}\cdot A},{g^{-1}}}$ for all $A\in {\mathcal{U}}^0$ and $g\in G$,
  \item $(u_{A,g})(u_{B,h})=u_{{A\cap (g\cdot B)},{gh}}$ for all $A,B\in {\mathcal{U}}^0$ and $g,h\in G$,
  \item $(u_{A,g})s_e=\left\{
                        \begin{array}{ll}
                          s_{g\cdot e}u_{{g \cdot s(e)},{\varphi(g,e)}} & \text{if } g \cdot r(e)\subseteq A \\
                          0 & \text{otherwise}
                        \end{array}
                      \right.$
    for all $A\in {\mathcal{U}}^0, e\in {\mathcal{U}}^1$ and $g\in G$.
\end{enumerate}
The \emph{$C^*$-algebra ${\mathcal{O}}_{G,\mathcal{U}}$ associated to} $(G,\mathcal{U},\varphi)$ is the universal $C^*$-algebra generated by a $(G,\mathcal{U})$-family $\{p_A,s_e,u_{A,g}\}$. Corollary \ref{prop7.5} below insures that such $C^*$-algebra ${\mathcal{O}}_{G,\mathcal{U}}$ with nonzero generators $\{p_A,s_e,u_{A,g}\}$ exists.
\end{defn}

Note that items $(2)$ and $(4)$ above in the case $h=1_G$ imply that
$$u_{A,g}  p_B=u_{A\cap(g\cdot B),g}.$$

\begin{prop}\label{prop3.5}
Let $(G,\mathcal{U},\varphi)$ be a self-similar ultragraph as in Definition $\ref{defn3.2}$. Then
\begin{equation}\label{(3.1)}
{\mathcal{O}}_{G,\mathcal{U}}=\mathrm{\overline{span}}\{s_{\alpha}u_{A,g} s^*_{\beta}\;:g\in G,\; \alpha,\beta \in{\mathcal{U}}^*,\;A\in {\mathcal{U}}^0\; and\; A\subseteq{s(\alpha)\cap g\cdot s(\beta})\},
\end{equation}
where $s_\alpha:=p_A$ if $\alpha=A\in \mathcal{U}^0$, and $s_\alpha:=s_{e_1} \ldots s_{e_n}$ if $\alpha=e_1 \ldots e_n \in \mathcal{U}^n.$
\end{prop}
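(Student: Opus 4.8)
The plan is to identify the set of spanning elements on the right-hand side of (\ref{(3.1)}),
$$X:=\bigl\{s_\alpha u_{A,g}s_\beta^*\;:\;g\in G,\ \alpha,\beta\in\mathcal{U}^*,\ A\in\mathcal{U}^0,\ A\subseteq s(\alpha)\cap g\cdot s(\beta)\bigr\},$$
and to show that $\overline{\mathrm{span}}\,X$ is a closed self-adjoint subalgebra of $\mathcal{O}_{G,\mathcal{U}}$ containing the generators $p_A,s_e,u_{A,g}$. Each element of $X$ is visibly a product of generators, so $\overline{\mathrm{span}}\,X\subseteq\mathcal{O}_{G,\mathcal{U}}$ is automatic; the content is the reverse inclusion, which follows by universality once $\overline{\mathrm{span}}\,X$ is known to be a $*$-subalgebra containing the generators. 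I would therefore organize the proof into four steps: (i) a ``path form'' of relation (5); (ii) the generators lie in $X$; (iii) $X$ is self-adjoint; (iv) $X\cdot X\subseteq X\cup\{0\}$ with the defining constraint preserved.

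For (i) I would prove by induction on $|\gamma|$ that, for $\gamma\in\mathcal{U}^*$ with $|\gamma|\geq 1$,
$$u_{A,g}s_\gamma=\begin{cases} s_{g\cdot\gamma}\,u_{g\cdot s(\gamma),\,\varphi(g,\gamma)} & \text{if } g\cdot r(\gamma)\subseteq A,\\ 0 & \text{otherwise,}\end{cases}$$
the base case being relation (5) of Definition \ref{defn3.4} and the inductive step combining (5) with the inductive definitions $g\cdot(\gamma_1\gamma_2)=(g\cdot\gamma_1)(\varphi(g,\gamma_1)\cdot\gamma_2)$ and $\varphi(g,\gamma_1\gamma_2)=\varphi(\varphi(g,\gamma_1),\gamma_2)$. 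I would also record the identity $u_{A,g}p_B=u_{A\cap(g\cdot B),g}$ noted after Definition \ref{defn3.4}, which covers the length-zero situations. Step (ii) is a direct check: taking $\alpha=\beta=A$ and $g=1_G$ gives $p_A$, taking $\alpha=e,\ \beta=s(e),\ A=s(e),\ g=1_G$ gives $s_e=s_e p_{s(e)}$, and taking $\alpha=A,\ \beta=g^{-1}\cdot A$ gives $u_{A,g}$, each satisfying the subset constraint.

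For (iii), the adjoint of $s_\alpha u_{A,g}s_\beta^*$ is $s_\beta u_{A,g}^*s_\alpha^*=s_\beta\,u_{g^{-1}\cdot A,\,g^{-1}}\,s_\alpha^*$ by relation (3), and applying $g^{-1}$ to $A\subseteq s(\alpha)\cap g\cdot s(\beta)$ yields $g^{-1}\cdot A\subseteq g^{-1}\cdot s(\alpha)\cap s(\beta)$ (with $g^{-1}\cdot A\in\mathcal{U}^0$ by Lemma \ref{lem3.1}), which is exactly the constraint making the adjoint lie in $X$. For (iv), the product $s_\alpha u_{A,g}s_\beta^*\cdot s_\gamma u_{B,h}s_\delta^*$ hinges on $s_\beta^*s_\gamma$. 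By mutual orthogonality of the ranges of the $s_e$ and the Cuntz--Krieger relations, $s_\beta^*s_\gamma=0$ unless $\beta$ and $\gamma$ are comparable. If $\gamma=\beta\gamma'$ then $s_\beta^*s_\gamma=p_{s(\beta)}s_{\gamma'}=s_{\gamma'}$, and the product becomes $s_\alpha u_{A,g}s_{\gamma'}u_{B,h}s_\delta^*$; applying the path form of (5) to $u_{A,g}s_{\gamma'}$ and then relation (4) collapses the middle to a single $s_{\alpha(g\cdot\gamma')}u_{A',g'}s_\delta^*$ with $A'=(g\cdot s(\gamma'))\cap(\varphi(g,\gamma')\cdot B)$ and $g'=\varphi(g,\gamma')h$. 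A short verification using $B\subseteq h\cdot s(\delta)$ and $A\subseteq s(\alpha)$ gives $A'\subseteq s(\alpha(g\cdot\gamma'))\cap g'\cdot s(\delta)$, so this term lies in $X$. The symmetric case $\beta=\gamma\beta'$ I would dispatch by taking adjoints and invoking (iii), thereby reducing it to the case just treated.

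The genuine difficulty, compared with the self-similar graph computation in \cite{bro14}, is the set-valued bookkeeping: one must carry the Cuntz--Krieger relations (CK1)--(CK4) and the membership conditions $r(e)\subseteq A$ through each reduction and check at every stage that the defining constraint $A\subseteq s(\alpha)\cap g\cdot s(\beta)$ survives. The self-adjointness reduction used in (iii)--(iv) is the device that keeps this manageable: it lets me avoid ever commuting a $u_{A,g}$ past an $s_\beta^*$ (which in the ultragraph setting would introduce restricting projections $p_C$ and genuinely messier sums), so that only the single clean pattern $u_{A,g}s_{\gamma'}$, governed by relation (5), ever arises. Concluding, $\overline{\mathrm{span}}\,X$ is a closed $*$-subalgebra containing the generators, whence it equals $\mathcal{O}_{G,\mathcal{U}}$.
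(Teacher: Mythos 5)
Your proposal is correct and follows essentially the same route as the paper: both show that $\mathrm{span}\,X$ is a self-adjoint subalgebra closed under multiplication (via the case analysis on how $\beta$ and $\gamma$ overlap, using the path-extended form of relation (5) and relation (4) to collapse the middle factors) and containing the generators, so its closure is all of $\mathcal{O}_{G,\mathcal{U}}$. Your only departures are cosmetic --- making the induction behind $u_{A,g}s_{\gamma'}=s_{g\cdot\gamma'}u_{\,\cdot\,,\varphi(g,\gamma')}$ explicit and dispatching the case $\beta=\gamma\beta'$ by taking adjoints rather than repeating the computation, as the paper does by declaring it ``analogous.''
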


\begin{proof}
Letting $\mathcal{M}=\text{span}\{s_{\alpha}u_{A,g}s^*_{\beta} : g\in G, \alpha,\beta \in{\mathcal{U}}^*, A\in {\mathcal{U}}^0\; \text{and}\; A\subseteq{s(\alpha)\cap g\cdot s(\beta})\}$, we show that $\mathcal{M}$ is a $*$-subalgebra of ${\mathcal{O}}_{G,\mathcal{U}}$. In order to do this, fix two elements $s_{\alpha}u_{A,g} s^*_{\beta}$ and $s_{\gamma}u_{B,h} s^*_{\delta}$ in $\mathcal{M}$. In the case $\gamma=\beta \varepsilon$ for some $\varepsilon \in {\mathcal{U}}^{\geq 1}$ with $r(\varepsilon)\subseteq g^{-1} \cdot A$, Definition \ref{defn3.4} implies that
\begin{align}\label{(3.2)}
(s_{\alpha}u_{A,g} s^*_{\beta})(s_{\gamma}u_{B,h} s^*_{\delta})&=s_{\alpha}u_{A,g} (s^*_{\beta}s_{\beta})s_{\varepsilon}u_{B,h} s^*_{\delta}\\
&=s_{\alpha}(u_{A,g}s_{\varepsilon})u_{B,h} s^*_{\delta}\nonumber\\
&=(s_{\alpha}s_{g\cdot \varepsilon })[(u_{{g\cdot s(\varepsilon)} ,{{\varphi(g,\varepsilon)}}}) (u_{B,h} )]s^*_{\delta}\nonumber \\
&=s_{\alpha(g\cdot\varepsilon)}\;u_{{(g\cdot s(\gamma))\cap(\varphi(g,\varepsilon)\cdot B ) },{ {\varphi(g,\varepsilon)}h}}\;{s^*_{\delta}}\;\; \;\;\;\;\;\;\; (\text{as}\;\; g\cdot r(\varepsilon)\subseteq A).\nonumber
\end{align}
Since
\begin{align}
(g\cdot s(\varepsilon))\cap(\varphi(g,\varepsilon)\cdot B) &\subseteq( g\cdot s(\varepsilon))\cap(\varphi(g,\varepsilon)h \cdot   s(\delta))\nonumber\\
&= s( g\cdot\varepsilon)\cap(\varphi(g,\varepsilon) h \cdot s(\delta))\nonumber \\
&=s( \alpha (g\cdot\varepsilon))\cap(\varphi(g,\varepsilon) h \cdot s(\delta)),\nonumber
\end{align}
it follows $(s_{\alpha}u_{A,g} s^*_{\beta})(s_{\gamma}u_{B,h} s^*_{\delta})\in {\mathcal{M}}$.

Multiplication (\ref{(3.2)}) for the cases $\beta=\gamma$ and $\beta=\gamma\varepsilon$ with $r(\varepsilon)\subseteq B$ are analogous, while for others is zero. Moreover, we have
$$(s_{\alpha}u_{A,g} s^*_{\beta})^*=s_{\beta}\;u_{{g^{-1}\cdot A} ,{g^{-1}}} \;s^*_{\alpha},$$
so $\mathcal{M}$ is self-adjoint and closed under multiplication. Therefore, $\overline{\mathcal{M}}$ is a closed $*$-subalgebra of ${\mathcal{O}}_{G,\mathcal{U}}$ containing the generators of ${\mathcal{O}}_{G,\mathcal{U}}$, whence $\overline{\mathcal{M}}={\mathcal{O}}_{G,\mathcal{U}}$.
\end{proof}


{\section{examples}\label{sec4}

Before giving inverse semigroup and groupoid models for self-similar ultragraph $C^*$-algebras, in this section we indicate some class of examples being of the form of the $C^*$-algebras of self-similar ultragraphs.

In a self-similar ultragraph $(G,\mathcal{U},\varphi)$, if the source edges are also singletons, then $(G,\mathcal{U},\varphi)$ would be trivially a self-similar graph in the sense of \cite{bro14}. In Subsection \ref{sub4.1}, we show moreover that a self-similar ultragraph $C^*$-algebra over a ``source-finite" ultragraph $\mathcal{U}$ (with a mild condition) is isomorphic to an Exel-Pardo $C^*$-algebra \cite{bro14}. However, we may have a triple $(G,\mathcal{U},\varphi)$ over a finite ultragraph $\mathcal{U}$ which could not presented via a self-similar graphs  \cite{bro14}. See Remark \ref{rem3.3}.

Furthermore, in Subsection \ref{sub4.2}, we simulate the self-similar group actions on graphs and ultragraphs for square $\{0,1\}$-matrices and Exel-Laca algebras.$\;$Recall that, Exel and Laca in \cite{li18}  introduced another extension of the Cuntz-Krieger algebras, besides the graph $C^*$-algebras, which are generated by infinite square $\{0,1\}$-matrices. In light of \cite[Theorem 4.5]{ana00} (see also \cite[Remark 4.6]{ana00}), these $C^*$-algebras are included in the class of ultragraph $C^*$-algebras. Hence, it is a natural question that  ``how may we define a self-similar action of a group $G$ on a $\{0,1\}$-matrix $M$ and its $C^*$-algebra according to Definitions \ref{defn3.2} and \ref{defn3.4}?'' Subsection \ref{sub4.2} will be devoted to answer to this question, where we will define an $M$-invariant self-similar triple $(G,I,\varphi)$ and associated $C^*$-algebra.

\subsection{Source-finite ultragraphs}\label{sub4.1}

Fixed a self-similar ultragraph $(G,\mathcal{U},\varphi)$, we associate a self-similar graph $(G,E_\mathcal{U},\phi)$ as follows. Let $E_{\mathcal{U}}^0:=U^0$ be as the vertex set. For each edge $e\in{\mathcal{U}}^1$ and $v\in s(e)$, draw an edge $e^v$  in $E_{\mathcal{U}}$ from $v$ to $r(e)$. Then the edge set of $E_{\mathcal{U}}$ is
$$E_{\mathcal{U}}^1=\{e^v\;:\; e\in {\mathcal{U}}^1 \; \text{and} \; v\in s(e)\}$$
such that $r_{E}(e^v)=r(e)$ and $s_{E}(e^v)=v$ for all $e^v\in E_{\mathcal{U}}^1$. For example

\begin{center}
\begin{tikzpicture}[thick]
\node ({U}) at (-13,0) {$\mathcal{U}:$};
\node (v) at (-11,0) {$v$};
\node (w) at (-9,0.75) {$w$};
\node (u) at (-9,-0.75) {$u$};

\node (E_u) at (-6,0) {$\Longrightarrow \qquad E_\mathcal{U}:$};

\node (V) at (-3,0) {$v$};
\node (W) at (-1,0.75) {$w$};
\node (U) at (-1,-0.75) {$u$};

\path[<-] (v) edge node[above=-14pt] {$e\hspace{2mm}$} (u);
\path[<-] (v) edge node[above=-3pt] {$\hspace{-3mm}e$} (w);

\draw[->] (V) ..controls (-4,-0.75) and (-4,0.75) .. node[left=1pt] {$e^v$} (V);

\path[<-] (V) edge node[above=-14pt] {$e^u\hspace{2mm}$} (U);
\path[<-] (V) edge node[above=-3pt] {$\hspace{-3mm}e^w$} (W);

\draw[->] (v) ..controls (-12,-0.75) and (-12,0.75) .. node[left=1pt] {$e$} (v);
\end{tikzpicture}

\end{center}
Moreover, the action of $G$ on $E_{\mathcal{U}}^0$ is just that on $U^0$ from $(G,\mathcal{U},\varphi)$, while for each $e^v\in E_{\mathcal{U}}^1 $ we define
$$g\cdot e^v:=({g\cdot e})^{g\cdot v}.$$
If, furthermore, we set $\phi(g,e^v):=\varphi(g,e)$ for $g\in G$ and $e^v \in  E_{\mathcal{U}}^1$, then $(G,E_{\mathcal{U}},\varphi)$ is a self-similar graph by the next lemma.
\begin{lem}
Let $(G,\mathcal{U},\varphi)$ be a self-similar ultragraph such that $\varphi (g,e)\cdot v=g\cdot v$ for all $g\in G, e \in {\mathcal{U}}^1$ and $v\in s(e)$. Then its graph triple $(G,E_\mathcal{U},\phi)$ defined above is a self-similar graph in the sense of Exel-Pardo \cite{bro14}.
\end{lem}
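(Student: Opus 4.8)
The plan is to verify directly that the triple $(G,E_\mathcal{U},\phi)$ satisfies the three defining conditions of a self-similar graph: that $E_\mathcal{U}$ is a genuine directed graph, that $G$ acts on it by graph automorphisms, and that $\phi$ is a $1$-cocycle satisfying the Exel--Pardo equality $\phi(g,e^v)\cdot s_E(e^v)=g\cdot s_E(e^v)$. Each of these amounts to transporting the corresponding property of $(G,\mathcal{U},\varphi)$ across the edge-splitting construction $e\mapsto\{e^v:v\in s(e)\}$, so the argument is largely a matter of unwinding definitions; the genuine content sits entirely in the last condition.

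First I would confirm that the formula $g\cdot e^v:=(g\cdot e)^{g\cdot v}$ really defines an edge of $E_\mathcal{U}$. Since $v\in s(e)$ and $G$ acts by ultragraph automorphisms, we have $s(g\cdot e)=g\cdot s(e)$, so $g\cdot v\in s(g\cdot e)$ and the symbol $(g\cdot e)^{g\cdot v}$ is legitimate. With well-definedness in hand I would check the automorphism axioms: $s_E(g\cdot e^v)=g\cdot v=g\cdot s_E(e^v)$ is immediate, while $r_E(g\cdot e^v)=r(g\cdot e)=g\cdot r(e)=g\cdot r_E(e^v)$ uses that the ultragraph action preserves ranges (recalling that $r(e)$ is a singleton). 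The group-action identities $(gh)\cdot e^v=g\cdot(h\cdot e^v)$ and $1_G\cdot e^v=e^v$ follow by expanding both sides and invoking that $G$ acts on $\mathcal{U}$. I would also note in passing that $E_\mathcal{U}$ inherits the source-free property, since every vertex $w$ receives some $e^v$ coming from an edge $e$ with $r(e)=\{w\}$.

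Next I would verify the cocycle identity for $\phi$. Using $\phi(g,e^v)=\varphi(g,e)$ together with $h\cdot e^v=(h\cdot e)^{h\cdot v}$, both sides collapse to the ultragraph cocycle identity, namely $\phi(gh,e^v)=\varphi(gh,e)=\varphi(g,h\cdot e)\varphi(h,e)=\phi(g,h\cdot e^v)\phi(h,e^v)$, which is exactly Definition \ref{defn3.2}(3)(a).

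The one place where the extra hypothesis is needed — and the step I would flag as the crux — is the Exel--Pardo equality. Because the source of a graph edge is a single vertex, one must produce the pointwise equality $\varphi(g,e)\cdot v=g\cdot v$ rather than the mere inclusion $\varphi(g,e)\cdot s(e)\subseteq g\cdot s(e)$ afforded by Definition \ref{defn3.2}(3)(b). This is precisely the standing assumption of the lemma, whence $\phi(g,e^v)\cdot s_E(e^v)=\varphi(g,e)\cdot v=g\cdot v=g\cdot s_E(e^v)$, finishing the verification. I expect no obstacle beyond carefully matching the pointwise hypothesis to the singleton-source requirement; indeed this is exactly the gap between inclusion and equality that distinguishes self-similar ultragraphs from self-similar graphs.
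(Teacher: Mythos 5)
Your proof is correct and follows essentially the same route as the paper: direct verification that the action transports to graph automorphisms, that $\phi$ inherits the cocycle identity from $\varphi$, and that the hypothesis $\varphi(g,e)\cdot v=g\cdot v$ yields the Exel--Pardo equality on sources. In fact you are slightly more thorough than the paper, which checks the automorphism and cocycle properties but leaves the final verification $\phi(g,e^v)\cdot s_E(e^v)=g\cdot s_E(e^v)$ (the only place the extra hypothesis enters) implicit.
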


\begin{proof}
For any $g\in G$ and $e^v \in  E_{\mathcal{U}}^1$, we have
$$s_{E}(g\cdot e^v)=s_{E}(({g\cdot e})^{g\cdot v})=g\cdot v=g\cdot s_{E}(e^v)$$
and
$$r_{E}(g\cdot e^v)=r_{E}(({g\cdot e})^{g\cdot v})=r(g\cdot e)=g\cdot r(e)=g\cdot r_{E}(e^v).$$
So, automorphisms $g:U^0 \sqcup {\mathcal{U}}^1      \rightarrow           U^0 \sqcup {\mathcal{U}}^1$ induce group automorphisms $g: E_{\mathcal{U}}^0 \sqcup  E_{\mathcal{U}}^1      \rightarrow            E_{\mathcal{U}}^0 \sqcup  E_{\mathcal{U}}^1  $ for all $g\in G$. Moreover $\phi:G\times E_{\mathcal{U}}^1  \rightarrow   G$ is a 1-cocycle because
\begin{align}
\phi(gh,e^v)&=\varphi(gh,e)\nonumber\\
&=\varphi(g,h\cdot e)\varphi(h,e)\nonumber\\
&=\phi(g,(h\cdot e)^{(h\cdot v)})\phi(h,e^v)\nonumber \\
&=\phi(g,h\cdot e^v)\phi(h,e^v)\nonumber
\end{align}
for all $g,h\in G$ and $e^v \in E_{\mathcal{U}}^1$. Therefore, $(G,E_\mathcal{U},\phi)$ is a self-similar graph.
\end{proof}
\begin{defn}
We say an ultragraph ${\mathcal{U}}=(U^0,{\mathcal{U}}^1,r,s)$ is {\it{source-finite}} whenever $|s(e)|< \infty$ for all $e\in {\mathcal{U}}^1$.
\end{defn}
Note that if $\mathcal{U}$ is a source-finite ultragraph, then ${\mathcal{U}}^0$ is precisely the set of all finite subsets of $U^0$.

\begin{prop}\label{prop4..3}
Let $(G,\mathcal{U},\varphi)$ be a self-similar ultragraph over a source-finite ultragraph $ {\mathcal{U}}$ and $(G,E_\mathcal{U},\phi)$ the associated self-similar graph. Suppose also $\varphi(g,e)\cdot v=g\cdot v$ for all $g\in G,\; e \in {\mathcal{U}}^1$ and $v\in s(e)$. Then ${\mathcal{O}}_{G,\mathcal{U}}\cong{\mathcal{O}}_{G,E_\mathcal{U}}$.
\end{prop}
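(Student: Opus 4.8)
The plan is to construct mutually inverse $*$-homomorphisms using the universal properties of the two algebras. Recall that $\mathcal{O}_{G,E_{\mathcal{U}}}$ is generated by an Exel--Pardo family $\{q_v,\,t_{e^v},\,w_g\}$ in which $\{q_v,t_{e^v}\}$ is a Cuntz--Krieger $E_{\mathcal{U}}$-family, $g\mapsto w_g$ is a unitary representation of $G$ (a unitary in the multiplier algebra when $E_{\mathcal{U}}^0=U^0$ is infinite), and the covariance relations $w_g\,t_{e^v}=t_{(g\cdot e)^{g\cdot v}}\,w_{\varphi(g,e)}$ and $w_g\,q_v=q_{g\cdot v}\,w_g$ hold. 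The preceding lemma, which uses exactly the hypothesis $\varphi(g,e)\cdot v=g\cdot v$, guarantees that $(G,E_{\mathcal{U}},\phi)$ is a genuine self-similar graph, so $\mathcal{O}_{G,E_{\mathcal{U}}}$ and this family are well defined.

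First I would produce a $(G,\mathcal{U})$-family inside $M(\mathcal{O}_{G,E_{\mathcal{U}}})$. Since $\mathcal{U}$ is source-finite, $\mathcal{U}^0$ is precisely the set of finite subsets of $U^0$ and each $s(e)$ is finite; this lets me define the \emph{finite} sums $P_A:=\sum_{v\in A}q_v$, $S_e:=\sum_{v\in s(e)}t_{e^v}$, and $U_{A,g}:=\sum_{v\in A}q_v\,w_g$. The Cuntz--Krieger relations of Definition~\ref{defn3.4}(1) then reduce to those for $\{q_v,t_{e^v}\}$ after summing over sources, using $\sum_{v\in s(e)}q_v=P_{s(e)}$ (so $S_e^*S_e=P_{s(e)}$) and $\sum_{r(e)=\{w\}}\sum_{u\in s(e)}t_{e^u}t_{e^u}^*=q_w$; relations (2)--(4) are immediate from the definitions. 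The crux is relation (5): expanding $U_{A,g}S_e=\sum_{w\in A}\sum_{v\in s(e)}q_w\,w_g\,t_{e^v}$, I push $w_g$ past each $t_{e^v}$ to obtain $t_{(g\cdot e)^{g\cdot v}}w_{\varphi(g,e)}$, and the leading projection $q_w$ forces the whole expression to vanish unless $g\cdot r(e)\subseteq A$. When it does not vanish, the result equals $\sum_{v\in s(e)}t_{(g\cdot e)^{g\cdot v}}w_{\varphi(g,e)}$, which I recognize as $S_{g\cdot e}\,U_{g\cdot s(e),\,\varphi(g,e)}$ by reindexing $s(g\cdot e)=g\cdot s(e)$ and inserting the source projections $q_{g\cdot v}$. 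The hypothesis $\varphi(g,e)\cdot v=g\cdot v$ enters precisely here: it yields $s_E\big((g\cdot e)^{g\cdot v}\big)=g\cdot v=\varphi(g,e)\cdot v$ and hence $g\cdot s(e)=\varphi(g,e)\cdot s(e)$, so that the two indexed unitaries coincide. By the universal property of $\mathcal{O}_{G,\mathcal{U}}$, this family induces a $*$-homomorphism $\Phi:\mathcal{O}_{G,\mathcal{U}}\to M(\mathcal{O}_{G,E_{\mathcal{U}}})$; since $q_v\,w_g=t_v\,w_g\,t_{g^{-1}\cdot v}^*\in\mathcal{O}_{G,E_{\mathcal{U}}}$, the image in fact lies in $\mathcal{O}_{G,E_{\mathcal{U}}}$.

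Conversely I would build an Exel--Pardo $(G,E_{\mathcal{U}})$-family in $M(\mathcal{O}_{G,\mathcal{U}})$ by $Q_v:=p_{\{v\}}$, $T_{e^v}:=s_e\,p_{\{v\}}$, and $W_g:=\sum_{v\in U^0}u_{\{v\},g}$ interpreted as a strict limit. Using the identity $u_{A,g}=\sum_{v\in A}u_{\{v\},g}$ for finite $A$ (which follows from $p_A\,u_{A,g}=u_{A,g}$ together with Definition~\ref{defn3.4}(4)), one checks that $W_g$ is a unitary multiplier with $W_gW_h=W_{gh}$; that $T_{e^v}$ is a partial isometry with $T_{e^v}^*T_{e^v}=p_{\{v\}}$ (here $v\in s(e)$ is used) satisfying the (CK) relations; and that the covariance relations $W_g\,T_{e^v}=T_{(g\cdot e)^{g\cdot v}}\,W_{\varphi(g,e)}$ and $W_g\,Q_v=Q_{g\cdot v}\,W_g$ follow from Definition~\ref{defn3.4}(5),(4). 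Universality of $\mathcal{O}_{G,E_{\mathcal{U}}}$ then yields $\Psi:\mathcal{O}_{G,E_{\mathcal{U}}}\to\mathcal{O}_{G,\mathcal{U}}$ (its values on the spanning elements $t_\mu w_g t_\nu^*$ land in the algebra because multipliers absorb into $\mathcal{O}_{G,\mathcal{U}}$ on both sides). Finally I would verify that $\Psi\circ\Phi$ and $\Phi\circ\Psi$ fix the respective generators: for instance $\Psi(\Phi(s_e))=\sum_{v\in s(e)}s_e\,p_{\{v\}}=s_e$, and likewise for $p_A$, $u_{A,g}$ and for $q_v,t_{e^v},w_g$, so the two maps are mutually inverse and $\mathcal{O}_{G,\mathcal{U}}\cong\mathcal{O}_{G,E_{\mathcal{U}}}$.

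I expect the main obstacle to be the bookkeeping for the group unitaries in the non-unital case: one must justify $W_g=\sum_v u_{\{v\},g}$ as a strict-limit unitary multiplier, confirm that $\Phi$ and $\Psi$ carry the universal families into the algebras rather than merely their multiplier algebras, and keep the two notions of the action on edges (\,$g\cdot e$ in $\mathcal{U}$ versus $g\cdot e^v=(g\cdot e)^{g\cdot v}$ in $E_{\mathcal{U}}$\,) consistently aligned through the standing hypothesis. The relation-(5) computation, though routine once the families are set up, is the place where both source-finiteness (to keep all sums finite) and the assumption $\varphi(g,e)\cdot v=g\cdot v$ are genuinely indispensable.
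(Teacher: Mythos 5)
Your proposal is correct and follows essentially the same route as the paper: both directions are obtained by assembling the finite sums $P_A=\sum_{v\in A}q_v$, $S_e=\sum_{v\in s(e)}t_{e^v}$, $U_{A,g}=\sum_{v\in A}q_vw_g$ on one side and $Q_v=p_{\{v\}}$, $T_{e^v}=s_ep_{\{v\}}$ together with the (partial) unitaries $u_{\{\cdot\},g}$ on the other, verifying the defining relations (with relation (5) being the only nontrivial one, exactly where $\varphi(g,e)\cdot v=g\cdot v$ and source-finiteness are used), and invoking universality to get mutually inverse $*$-homomorphisms. The only cosmetic difference is that you package the group action as a single multiplier unitary $W_g=\sum_v u_{\{v\},g}$, whereas the paper works directly with the localized elements $z_{v,g}$ (resp.\ $Z_{v,g}=u_{\{g\cdot v\},g}$); these presentations are equivalent.
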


\begin{proof}
Let $\{s_e,p_A,u_{A,g}\}$ and $\{t_{e^v},q_v,z_{v,g}\}$ be the generating $(G,\mathcal{U})$- and $({G,E_\mathcal{U}})$-families for ${\mathcal{O}}_{G,\mathcal{U}}$ and ${\mathcal{O}}_{G,E_\mathcal{U}}$ respectively. According to \cite[Remark 2.3]{bur81}, $z_g:=\sum_{v\in E_{\mathcal{U}}^0 }{z_{v,g}}$ is a unitary in the multiplier algebra ${\mathcal{M}}({\mathcal{O}}_{G,E_\mathcal{U}})$ for any $g\in G$, which gives a unitary $*$-representation $g \mapsto z_g$ of $G$ into ${\mathcal{M}}({\mathcal{O}}_{G,E_\mathcal{U}})$ satisfying
$$z_{v,g}=q_vz_g\qquad \text{and} \qquad z_g t_{e^v}=t_{g\cdot e^v}z_{\phi(g, e^v)}$$
for $v\in E_{\mathcal{U}}^0$ and $ e^v \in  E_{\mathcal{U}}^1 $. For every $A\in {\mathcal{U}}^0,\; g\in G$ and $e\in {\mathcal{U}}^1$, define
$$P_A:=\sum_{v\in A} q_v,\;\;\;\;U_{A,g}:=\sum_{v\in A} z_{v,g}=\sum_{v\in A}q_v z_g \qquad \text{and} \qquad S_e:=\sum_{v\in s(e)}t_{e^v}.$$
As noted before, since ${\mathcal{U}}$ is source-finite, every $A\in {\mathcal{U}}^0$ is finite, so the above summations are finite and $P_A, U_{A,g}$ and $S_e$ belong to ${\mathcal{O}}_{G,E_\mathcal{U}}$. It is easy to show that $\{S_e,P_A,U_{A,g}\}$ is a $(G,\mathcal{U})$-family by verifying relations in Definition \ref{defn3.4}; here, we verify relations (4) and (5) for example. For (4), letting arbitrary $A,B\in {\mathcal{U}}^0$ and $g,h\in G$, we have
\begin{align}
U_{A,g}U_{B,h}&=(\sum_{v\in A}q_v z_g)(\sum_{w\in B}q_w z_h)\nonumber\\
&=\sum_{v\in A}\sum_{w\in B}q_v (z_g q_w)z_h\nonumber\\
&=\sum_{v\in A}\sum_{w\in B}q_v(q_{g\cdot w}z_g)z_h\nonumber \\
&=\sum_{v\in {A\cap(g\cdot  B)}}q_v z_{gh}\nonumber\\
&=U_{{{A\cap(g\cdot  B)}},gh}.\nonumber
\end{align}
Similarly, for relation (5), one can write
\begin{align}
U_{A,g}S_e&=(\sum_{w\in A}q_w z_g)q_{r(e)} S_e\nonumber\\
&=(\sum_{w\in A}q_w q_{g\cdot r(e)}z_g)(\sum_{v\in s(e)}t_{e^v})\nonumber\\
&=\sum_{w\in A}\sum_{v\in s(e)}q_w q_{g\cdot r(e)}t_{g\cdot{e^v}}z_{\phi(g,e^v)}.\nonumber
\end{align}
So, if $g\cdot r(e) \subseteq A$, it is equal to $S_{g\cdot e}U_{g\cdot s(e),\varphi(g,e)}$ and otherwise is zero. Therefore, $\{S_e,P_A,U_{A,g}\}$ is a $(G,{\mathcal{U}})$-family in ${\mathcal{O}}_{G,\mathcal{U}}$. On the other hand, if we define
$$Q_v:=P_{\{v\}},\qquad T_{e^v}:=s_ep_{\{v\}}, \qquad \text{and} \qquad Z_{v,g}:=u_{\{g \cdot v\},g}$$
for every $v\in E_{\mathcal{U}}^0,\; e^v \in E_{\mathcal{U}}^1$ and $g\in G$, then $\{Q_v,T_{e^v},Z_{v,g}\}$ is a $(G,E_{\mathcal{U}})$-family in ${\mathcal{O}}_{G,\mathcal{U}}$. Thus, by universality, there are $*$-homomorphisms $\pi _1:{\mathcal{O}}_{G,\mathcal{U}}     \rightarrow   {\mathcal{O}}_{G,E_{\mathcal{U}}}$ and $\pi _2:{\mathcal{O}}_{G,\mathcal{U}}     \rightarrow   {\mathcal{O}}_{G,E_{\mathcal{U}}}$ such that $\pi _1 \circ \pi _2$ and $\pi _2 \circ \pi _1$ are identity on the generators of ${\mathcal{O}}_{G,E_{\mathcal{U}}}$ and ${\mathcal{O}}_{G,\mathcal{U}} $  respectively, and hence $\pi _1$ and $\pi _2$ are isomorphisms. The proof is complete.
\end{proof}

Furthermore, we should note that the condition $\varphi (g,e)\cdot v=g\cdot v$ for $v\in s(e)$, in Proposition \ref{prop4..3} is stronger than the one $\varphi (g,e)\cdot s(e)\subseteq g\cdot s(e)$ in Definition \ref{defn3.2}(3)(b) for general self-similar ultragraphs. So, we may construct a triple $(G,\mathcal{U},\varphi)$ over a source-finite ultragraph $\mathcal{U}$ such that its graph triple $(G,E_\mathcal{U},\phi)$ is not a self-similar graph in the sense of  \cite{bro14}. For example, let $G={\mathbb{Z}}_2=\{0,1\}$ and let $\mathcal{U}$ be the finite ultragraph

\begin{center}
\begin{tikzpicture}[thick]
\node (u) at (-11,0) {$u$};
\node (v) at (-9,0.75) {$v$};
\node (w) at (-9,-0.75) {$w$};

\path[->] (u) edge node[above=-3pt] {$e\hspace{2mm}$} (v);
\path[->] (u) edge node[above=-10pt] {$\hspace{-3mm}e$} (w);

\end{tikzpicture}
\end{center}
with the action ${\mathbb{Z}}_2  \curvearrowright  \mathcal{U}$ defined by
$$1 \cdot v=w, \qquad  1\cdot w=v,$$
$$1\cdot u=u, \qquad 1\cdot e=e,$$
and $0 \cdot a=a$ for all $a\in \{  u,v,w,e     \}$. If  $\varphi:{\mathbb{Z}}_2 \times {\mathcal{U}}^1   \rightarrow {\mathbb{Z}}_2$ is the zero cocycle, that is $\varphi(g,e)=0$ for $g\in  {\mathbb{Z}}_2$, then $( {\mathbb{Z}}_2,  {\mathcal{U}},\phi      )$ is a self-similar ultragraph as in Definition \ref{defn3.2}. But, since
$$\phi  (1,e^v)\cdot v=0 \cdot v=v \neq 1\cdot v,$$
the graph triple $( {\mathbb{Z}}_2,  E_{\mathcal{U}},\phi)$ is not a self-similar graph in the sense of \cite{bro14}. Therefore, the concept of self-similar ultragraphs even for finite ultragraphs is more general than that of the Exel-Pardo self-similar graphs.

\subsection{Self-similar group actions on the Exel-Laca algebras}\label{sub4.2}

Exel and Laca in \cite{li18} generalized the Cuntz-Krieger algebras for infinite square $\{0,1\}$-matrices. Inspired from the obtained results for the graph $C^*$-algebras, in particular in \cite{li19,nek04}, they then studied these $C^*$-algebras. Note that the Exel-Laca algebras for row-finite $\{0,1\}$-matrices with no identically zero rows coincide with the graph $C^*$-algebras of row-finite, source-free graphs.

Mark Tomforde in \cite[Section 4]{ana00} showed that every Exel-Laca algebra is an ultragraph $C^*$-algebra such that in the associated ultragraph every vertex receives exactly one edge. However, the class of ultragraph $C^*$-algebras is substantially larger than that of graph $C^*$-algebras plus Exel-Laca algebras.

Let $G$ be a countable discrete group and let $M$ be a countable square $\{0,1\}$-matrix. Here, using Definition \ref{defn3.4}, we define a specific group action of $G$ on $M$ and associated $C^*$-algebra ${\mathcal{O}}_{G,I,M}$. We will then show that ${\mathcal{O}}_{G,I,M}$ is isomorphic to a self-similar ultragraph $C^*$-algebra.

Let $I$ be a countable index set and $M=(a_{i,j})$ an $I\times I$ $\{0,1\}$-matrix. We will denote the $ i^{th}$ row of $M$ by $M_i=(a_{i,j})_{j\in I}$ in the sequel. An {\it{M-invariant} action} of $G$ on $I$ is a collection of bijections $g:I\rightarrow I$, denoted by $g\cdot i:=g(i)$, satisfying

\begin{enumerate}

  \item $g \circ g^{-1}=g^{-1} \circ g=i d_I$ and
  \item $M_{g\cdot i}=(a_{g\cdot i,g\cdot j})_{j\in I}$ for all $i\in I$ (in other words, $a_{g\cdot i,g\cdot j}=a_{g\cdot i,j}$ for all $g\in G$ and $i,j\in I$).
\end{enumerate}

\begin{defn}\label{defn4.6}
Let $M$ be an $I\times I$ $\{0,1\}$-matrix with no identically zero rows. By an {\it{$M$-invariant self-similar triple}}, we mean a triple $(G,I,\varphi)$ such that

\begin{enumerate}

  \item there is an $M$-invariant action $G \curvearrowright I$, and
  \item $\varphi:G\times I\rightarrow G$ is a map satisfying

\begin{enumerate}

  \item (the $1$-cocycle property:) $\varphi(gh,i)=\varphi(g,h \cdot i)\varphi(h,i)$ for every $g,h \in G$ and $i\in I$, and
  \item for each $i\in I,\; j\in \{k\in I\;:\; a_{i,k}=1\}$ and $g\in G$, there is $j'\in \{k\in I \;:\; a_{i,k}=1\}$ such that $\varphi(g,i)\cdot j=g\cdot j'.$
\end{enumerate}
\end{enumerate}
\end{defn}

We now associate a $C^*$-algebra to an $M$-invariant triple $(G,I,\varphi)$.

\begin{defn}\label{defn4.7}
Let $M$ be an $I\times I$ $\{0,1\}$-matrix with no identically zero rows and $(G,I,\varphi)$ an $M$-invariant triple. A {\it{$(G,I)$-family}} is a collection $\{s_i: i\in I\}\cup \{u_g:g\in G\}$ in a unital $C^*$-algebra $\mathfrak{B}$ satisfying

\begin{enumerate}

  \item $\{s_i\;:\; i\in I\}$ is an Exel-Laca family in the sense of \cite[Definition 7.1]{li18},
  \item the map $g \mapsto u_g$ is a unitary $*$-representation of $G$ on $\mathfrak{B}$, and moreover
  \item $u_g s_i=s_{g\cdot i}u_{\varphi(g,i)}$ for all $g\in G$ and $i\in I$.
\end{enumerate}
Then the $C^*$-algebra $\mathcal{O}_{G,I,M}$ is defined by
$$\mathcal{O}_{G,I,M}=\mathrm{\overline{span}}\{s_i u_g s^{*}_j\; :\; i,j \in I\; \text{and}\; g\in G\}$$
where $\{s_i,u_g\}$ is a universal $(G,I)$-family in a unital $C^*$-algebra.
\end{defn}

Our aim is to construct a self-similar ultragraph $(G,{\mathcal{U}}_M,\varphi)$ for which the $C^*$-algebra ${\mathcal{O}}_{G,I,M}$ is isomorphic to ${\mathcal{O}}_{G,{\mathcal{U}}_M}$.

\begin{lem}\label{lem4.8}
If $(G,I,\varphi)$ is an $M$-invariant triple, then
$$\varphi(g,i)^{-1}=\varphi(g^{-1},g\cdot i)$$
for all $g\in G$ and $i\in I$.
\end{lem}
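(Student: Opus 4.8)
The plan is to mirror the proof of Lemma \ref{lem3.3}, which established the analogous identity for paths in a self-similar ultragraph. The key observation is that the $1$-cocycle property in Definition \ref{defn4.6}(2)(a) behaves formally exactly like the one in Definition \ref{defn3.2}(3)(a); the only difference is that the cocycle now has the index set $I$ in its second argument rather than the edge set $\mathcal{U}^1$. Since the desired identity is purely a statement about the cocycle $\varphi$ and does not involve condition (2)(b) at all, I expect the proof to be a one-line computation identical in spirit to Lemma \ref{lem3.3}.

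First I would record the value of $\varphi$ on the identity. By the $1$-cocycle property with $g=h=1_G$ we get $\varphi(1_G,i)=\varphi(1_G,1_G\cdot i)\varphi(1_G,i)=\varphi(1_G,i)^2$, and since the codomain is a group, cancellation gives $\varphi(1_G,i)=1_G$ for all $i\in I$. This is the only auxiliary fact needed.

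Next I would apply the $1$-cocycle property to the pair $(g^{-1},g)$. Using $\varphi(g^{-1}g,i)=\varphi(1_G,i)=1_G$ together with condition (2)(a), I obtain
$$1_G=\varphi(g^{-1}g,i)=\varphi(g^{-1},g\cdot i)\,\varphi(g,i).$$
Solving for the inverse of $\varphi(g,i)$ in the group $G$ yields $\varphi(g,i)^{-1}=\varphi(g^{-1},g\cdot i)$, which is exactly the claimed identity.

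**The main obstacle**, such as it is, is merely bookkeeping with the arguments of $\varphi$: one must be careful to feed the pair $(g^{-1},g)$ into the cocycle relation in the correct order so that the middle term becomes $g^{-1}\cdot(g\cdot i)=i$ and the outer argument of the first factor is $g\cdot i$ rather than $i$. Beyond this there is no real difficulty, since the argument uses only the abstract group-cocycle identity and the trivial computation that $\varphi$ vanishes (equals $1_G$) on the identity element; in particular the matrix-invariance condition (2)(b) plays no role here.
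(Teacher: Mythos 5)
Your proof is correct and is essentially the paper's own argument: the paper simply states that the proof is analogous to Lemma \ref{lem3.3}, and your computation $1_G=\varphi(g^{-1}g,i)=\varphi(g^{-1},g\cdot i)\varphi(g,i)$ is exactly that analogue (with the pair fed in as $(g^{-1},g)$ rather than $(g,g^{-1})$, which just renames $g$). Your preliminary verification that $\varphi(1_G,i)=1_G$ via cancellation is a small extra care the paper leaves implicit, and it is fine.
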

\begin{proof}
The proof is analogous to that of Lemma $\ref{lem3.3}$.
\end{proof}

\begin{lem}
If $\{s_i,u_g\}$ is a $(G,I)$-family as in Definition $\ref{defn4.7}$, then
\begin{equation}\label{(4.1)}
u_g s_i s^{*}_i=s_{g\cdot i}s^{*}_{g\cdot i} u_g
\end{equation}
for all $g\in G$ and $i\in I$.
\end{lem}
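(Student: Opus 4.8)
The plan is to reduce the claim to the conjugated identity
$u_g s_i s_i^* u_g^* = s_{g\cdot i}s_{g\cdot i}^*$ and then multiply through by $u_g$ on the right. This is legitimate because, by Definition \ref{defn4.7}(2), the map $g\mapsto u_g$ is a unitary $*$-representation of $G$, so each $u_g$ is unitary with $u_g^*=u_{g^{-1}}$, and $u_gu_h=u_{gh}$ together with $u_{1_G}=1$. In particular $u_g u_g^{-1*}$-type products are controlled entirely by the group law.

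The first step is to apply the covariance relation Definition \ref{defn4.7}(3), namely $u_g s_i = s_{g\cdot i}u_{\varphi(g,i)}$, to rewrite the left factor. For the right factor I would take the adjoint of this same relation: since each $u_g$ is unitary, $(u_g s_i)^* = s_i^* u_{g^{-1}}$ and $(s_{g\cdot i}u_{\varphi(g,i)})^* = u_{\varphi(g,i)^{-1}} s_{g\cdot i}^*$, which gives $s_i^* u_g^* = u_{\varphi(g,i)^{-1}} s_{g\cdot i}^*$.

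Combining the two pieces, the computation is then immediate:
\[
u_g s_i s_i^* u_g^* = (u_g s_i)(s_i^* u_g^*) = s_{g\cdot i}\,u_{\varphi(g,i)}u_{\varphi(g,i)^{-1}}\,s_{g\cdot i}^* = s_{g\cdot i}s_{g\cdot i}^*,
\]
where the middle product collapses to $u_{\varphi(g,i)\varphi(g,i)^{-1}} = u_{1_G} = 1$ precisely because $g\mapsto u_g$ is a representation. Multiplying both sides on the right by $u_g$ (using $u_g^* u_g = 1$) yields the desired identity $u_g s_i s_i^* = s_{g\cdot i}s_{g\cdot i}^* u_g$. There is no real obstacle here; the only point requiring care is that the cocycle value $\varphi(g,i)$ and its inverse arise from the \emph{same} edge relation and therefore cancel via the representation property, so no appeal to Lemma \ref{lem4.8} or to the cocycle identity is actually needed for this statement.
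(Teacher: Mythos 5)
Your proof is correct, and it takes a genuinely different (and shorter) route than the paper's. The paper also begins by applying the covariance relation $u_g s_i = s_{g\cdot i}u_{\varphi(g,i)}$, but it then has to move the cocycle unitary $u_{\varphi(g,i)}$ past $s_i^*$; to do this it writes $u_{\varphi(g,i)}s_i^* = \bigl(s_i u_{\varphi(g,i)^{-1}}\bigr)^*$, invokes Lemma \ref{lem4.8} to identify $\varphi(g,i)^{-1}$ with $\varphi(g^{-1}, g\cdot i)$, and applies the covariance relation a second time (for $g^{-1}$ acting on $g\cdot i$) to recognize this as $(u_{g^{-1}}s_{g\cdot i})^* = s_{g\cdot i}^* u_g$. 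Your conjugation trick sidesteps all of this: writing $u_g s_i s_i^* u_g^* = (u_g s_i)(u_g s_i)^*$ and substituting the covariance relation once, the middle factor $u_{\varphi(g,i)}u_{\varphi(g,i)}^*$ collapses to $1$ by unitarity alone, after which multiplying on the right by $u_g$ gives the claim. So your closing remark is accurate: neither Lemma \ref{lem4.8} nor the cocycle identity is needed, and the paper's detour through them buys nothing extra for this particular statement. The only blemish is the garbled phrase about ``$u_g u_g^{-1*}$-type products,'' which is harmless; the substantive steps are all justified.
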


\begin{proof}
By the relations in Definition $\ref{defn4.7}$, we can write
\begin{align}
u_g s_i s^{*}_i&=s_{g\cdot i}u_{\varphi(g,i)}s^{*}_i\nonumber\\
&=s_{g\cdot i}{(   s_i  u_{\varphi(g,i)^{-1}} )}^*\nonumber\\
&=s_{g\cdot i}{( s_{g^{-1}g\cdot i} u_{\varphi(g^{-1},g\cdot i)} )}^*\qquad \text{(by Lemma }\ref{lem4.8})    \nonumber\\
&=s_{g\cdot i}(u_{g^{-1}}s_{g\cdot i})^*\nonumber\\
&=s_{g\cdot i}s^{*}_{g\cdot i}u_g,\nonumber
\end{align}
establishing ($\ref{(4.1)}$).
\end{proof}

Let $(G,I,\varphi)$ be an $M$-invariant triple. By \cite[Section 4]{ana00}, one may associate the ultragraph ${\mathcal{U}}_M$ to $(I,M)$ such that $M$ is the edge matrix of ${\mathcal{U}}_M$; that is $U^0_M=\{v_i\;:\; i\in I\}$, ${\mathcal{U}}^{1}_M=I$, the range and source maps are
$$r(i)=\{v_i\}\qquad \text{and} \qquad s(i)=\{v_j\;:\: a_{i,j}=1\}$$
for all $i\in I$. Moreover, if we define $g\cdot v_i :=v_{g\cdot i}$ for $i\in I$, also $g\cdot i$ and $\varphi(g,i)$ are as those in $(G,I,\varphi)$, then $(G,{\mathcal{U}}_M,\varphi)$ is a self-similar ultragraph. The next proposition shows that ${\mathcal{O}}_{G,I,M}$ is isometrically isomorphic to ${\mathcal{O}}_{G,{\mathcal{U}}_M}$.$\;$Note that since $M$ has no identically zero rows, every vertex $v_i$ in the ultragraph ${\mathcal{U}}_M$ receives the unique edge $i$.

\begin{prop}\label{prop4.10}
Let $M$ be an $I \times I$, $\{0,1\}$-matrix with no identically zero rows and $(G,I,\varphi)$ an $M$-invariant triple as in Definition $\ref{defn4.6}$. If $(G,{\mathcal{U}}_M,\varphi)$ is the associated self-similar ultragraph as above, then ${\mathcal{O}}_{G,I,M}\cong {\mathcal{O}}_{G,{\mathcal{U}}_M}.$

\end{prop}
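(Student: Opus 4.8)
The plan is to produce a $(G,I)$-family inside the multiplier algebra $\mathcal{M}(\mathcal{O}_{G,\mathcal{U}_M})$ and a $(G,\mathcal{U}_M)$-family inside $\mathcal{O}_{G,I,M}$, and then obtain mutually inverse $*$-homomorphisms by universality, in the spirit of Proposition \ref{prop4..3}. Write $\{s_i,u_g\}$ for the generating $(G,I)$-family of $\mathcal{O}_{G,I,M}$ and $\{p_A,t_e,w_{A,g}\}$ for the generating $(G,\mathcal{U}_M)$-family, recalling that in $\mathcal{U}_M$ one has $\mathcal{U}_M^1=I$, $r(i)=\{v_i\}$ and $s(i)=\{v_j:a_{i,j}=1\}$. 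For the forward map, following \cite[Remark 2.3]{bro14} as in Proposition \ref{prop4..3}, the finite partial sums of $U_g:=\sum_{i\in I}w_{\{v_i\},g}$ converge strictly to a unitary of $\mathcal{M}(\mathcal{O}_{G,\mathcal{U}_M})$, and relation (4) of Definition \ref{defn3.4} shows $g\mapsto U_g$ is a unitary representation of $G$. Putting $S_i:=t_i$, the identities $t_i^*t_i=p_{s(i)}$, $t_it_i^*=p_{\{v_i\}}$ and $p_{s(i)}t_j=a_{i,j}t_j$ realize $\{t_i\}$ as an Exel--Laca family through Tomforde's correspondence \cite[Section 4]{ana00}, while relation (5) of Definition \ref{defn3.4} gives $U_gt_i=w_{\{v_{g\cdot i}\},g}\,t_i=t_{g\cdot i}\,w_{g\cdot s(i),\varphi(g,i)}=t_{g\cdot i}U_{\varphi(g,i)}$. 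Hence $\{S_i,U_g\}$ is a $(G,I)$-family and universality yields $\pi_1\colon\mathcal{O}_{G,I,M}\to\mathcal{O}_{G,\mathcal{U}_M}$ (the image lands in $\mathcal{O}_{G,\mathcal{U}_M}$ because $t_\alpha U_g=t_\alpha w_{s(\alpha),g}$), which is surjective by Proposition \ref{prop3.5}.

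For the reverse map, Tomforde's correspondence \cite[Theorem 4.5]{ana00} promotes the initial and final projections $Q_i:=s_i^*s_i$ and $R_i:=s_is_i^*$ of $\{s_i\}$ to a Cuntz--Krieger $\mathcal{U}_M$-family $\{P_A,T_e\}$ in $\mathcal{O}_{G,I,M}$ with $T_i=s_i$, $P_{\{v_i\}}=R_i$ and $P_{s(i)}=Q_i$; I then set $W_{A,g}:=P_Au_g$. Relations (1) and (2) of Definition \ref{defn3.4} are immediate, and relation (5) collapses to $s_fP_{s(f)}=s_f$. The content lies in relations (3) and (4), and a short computation shows that both follow from the single covariance identity $u_gP_Au_g^*=P_{g\cdot A}$ for all $A\in\mathcal{U}_M^0$, $g\in G$. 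Since $A\mapsto P_A$ is multiplicative and $\mathcal{U}_M^0$ is generated under $\cap,\cup,\setminus$ by the singletons $\{v_i\}$ and the sources $s(i)$, it is enough to verify this for $A=\{v_i\}$, where it is exactly \eqref{(4.1)}, and for $A=s(i)$.

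The covariance of the (possibly infinite) source projections, $u_gQ_iu_g^*=Q_{g\cdot i}$, is the main obstacle, since in an Exel--Laca algebra $Q_i$ need not be the supremum of the final projections it dominates. I would first extract the key algebraic consequence of Definition \ref{defn4.6}: condition (2)(b) applied to $(g,i)$ gives $\varphi(g,i)\cdot s(i)\subseteq g\cdot s(i)$, while applied to $(g^{-1},g\cdot i)$, and combined with Lemma \ref{lem4.8} and the $M$-invariance $a_{g\cdot i,g\cdot j}=a_{i,j}$, it gives the reverse inclusion; hence $\varphi(g,i)\cdot s(i)=g\cdot s(i)$ as sets. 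On the other hand, the relation $u_gs_i=s_{g\cdot i}u_{\varphi(g,i)}$ together with Lemma \ref{lem4.8} yields $Q_{g\cdot i}=u_{\varphi(g,i)}Q_iu_{\varphi(g,i)}^*$, so it remains to show that conjugation by $u_g$ and by $u_{\varphi(g,i)}$ move $Q_i$ to the same projection. This is where the Exel--Laca compactness relation \cite[Definition 7.1]{li18} is essential: it makes the initial projection $Q_i=s_i^*s_i$ a function of the support $s(i)$ of the $i$-th row of $M$, and conjugation by any $u_a$ carries $Q_i$ to the initial projection attached to the set $a\cdot s(i)$ (one uses $u_aR_ju_a^*=R_{a\cdot j}$ to match the dominated final projections and the compactness relation to pin down the remaining ``point at infinity''). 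As $g\cdot s(i)=\varphi(g,i)\cdot s(i)$, the two conjugates agree, giving $u_gQ_iu_g^*=Q_{g\cdot i}$ and thus the covariance identity in general. Making this last step fully rigorous --- that an Exel--Laca initial projection is determined by its row support and is fixed by those $u_a$ preserving that support setwise --- is the point I expect to require the most care.

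With covariance established, $\{P_A,T_e,W_{A,g}\}$ is a $(G,\mathcal{U}_M)$-family and universality provides $\pi_2\colon\mathcal{O}_{G,\mathcal{U}_M}\to\mathcal{O}_{G,I,M}$. Finally I would check that $\pi_1$ and $\pi_2$ are mutually inverse by evaluating on the spanning elements of Proposition \ref{prop3.5} and Definition \ref{defn4.7}: one computes $\pi_2\pi_1(s_iu_gs_j^*)=s_i(Q_iu_g)s_j^*=s_iu_gs_j^*$ and $\pi_1\pi_2(t_\alpha w_{A,g}t_\beta^*)=t_\alpha(p_AU_g)t_\beta^*=t_\alpha w_{A,g}t_\beta^*$, so both compositions are the identity and $\mathcal{O}_{G,I,M}\cong\mathcal{O}_{G,\mathcal{U}_M}$.
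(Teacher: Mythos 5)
Your proposal takes the same route as the paper's proof: a $(G,I)$-family in $\mathcal{M}(\mathcal{O}_{G,\mathcal{U}_M})$ obtained from strictly convergent sums of the $u_{\{v\},g}$, a $(G,\mathcal{U}_M)$-family in $\mathcal{O}_{G,I,M}$ obtained from the projections $P_A$ and the elements $P_Au_g$ via Tomforde's correspondence, and mutually inverse maps from universality checked on spanning elements. The forward direction and the final verification match the published argument essentially line by line; where you add value is in isolating the one step the paper dismisses as ``routine to verify,'' namely the covariance $u_gP_Au_g^*=P_{g\cdot A}$ needed for relations (3) and (4) of Definition \ref{defn3.4}, and your reduction of it is correct: (\ref{(4.1)}) handles the range projections, the relation $u_gs_i=s_{g\cdot i}u_{\varphi(g,i)}$ together with Lemma \ref{lem4.8} gives only $u_{\varphi(g,i)}Q_iu_{\varphi(g,i)}^*=Q_{g\cdot i}$ for the source projections $Q_i=s_i^*s_i$, and what remains is that $u_b$ fixes $Q_k$ for $b=g\varphi(g,i)^{-1}$ and $k=g\cdot i$, where $b$ merely permutes the support of row $k$.

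You are right to flag that last step, and I would press harder on it than you do: I do not believe it follows from the relations of Definition \ref{defn4.7} alone. Those relations determine $u_b$ only on the closed span $H_R$ of the ranges of the final projections $R_j=s_js_j^*$ (there one must have $u_b s_j\xi=s_{b\cdot j}u_{\varphi(b,j)}\xi$), and they force $u_b$ to preserve $H_R$; on $H_R^{\perp}$ the family $(u_b)$ can be an arbitrary unitary representation. When row $k$ has infinite and co-infinite support, the Exel--Laca relation (iv) does not express $Q_k$ through the $R_j$ and the unit, so in a faithful representation $Q_k$ has a nonzero ``part at infinity'' inside $H_R^{\perp}$, and (after adding a summand on which every $s_j$ acts as zero, which preserves all Exel--Laca relations) $\ker Q_k\cap H_R^{\perp}$ is also nonzero. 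Choosing $u_b$ on $H_R^{\perp}$ to mix these two subspaces produces a legitimate $(G,I)$-family with $u_bQ_ku_b^*\neq Q_k$, and then $U_{s(k),g}U_{s(k),g^{-1}}\neq U_{s(k),1_G}$, so Definition \ref{defn3.4}(4) fails for the candidate family and the map $\mathcal{O}_{G,\mathcal{U}_M}\to\mathcal{O}_{G,I,M}$ is not produced by this argument. In short, matching the dominated final projections does not pin down the conjugate of a source projection, and the compactness relation only rescues the cases where the relevant Boolean combinations of rows are finite or co-finite. This is a genuine gap, but it is a gap in the paper's own proof at least as much as in yours; repairing it seems to require either imposing $u_gs_i^*s_iu_g^*=s_{g\cdot i}^*s_{g\cdot i}$ as an additional relation in Definition \ref{defn4.7} or restricting the class of matrices $M$.
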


\begin{proof}
Suppose that $\{s_i,p_i,u_{A,g}\}$ and $\{t_i,w_g\}$ are the generators of ${\mathcal{O}}_{G,{\mathcal{U}}_M}$ and ${\mathcal{O}}_{G,I,M}$, respectively. According to the relations in Definition $\ref{defn3.4}$, $u_{A,g}$'s are partial unitaries, so $\sum_{v\in{ U^{0}_M}}u_{\{v\},g}$ converges to a unitary $u_g\in M({\mathcal{O}}_{G,{\mathcal{U}}_M})$ in the strong topology satisfying
$$p_A u_g=u_{A,g}$$
for all $A\in {\mathcal{U}}^0 _A$. Moreover, \cite[Proposition 4.4]{ana00} says that $\{s_i : i\in I\}$ is an Exel-Laca family  generating ${\mathcal{O}}_M$, and thus $\{s_i,u_g\}$ satisfies relations in Definition $\ref{defn4.7}$. Hence the universality of ${\mathcal{O}}_{G,I,M}$ gives a $*$-homomorphism $\pi:{\mathcal{O}}_{G,I,M}\rightarrow M({\mathcal{O}}_{G,{\mathcal{U}}_M})$ such that $\pi(t_i)=s_i$ and $\pi(w_g)=u_g$ for each $i\in I$ and $g\in G$.

 On the other hand, define $P_{s(i)}=s^* _i s_i$ and $P_{r(i)}=s_i s^* _i$ for every $i\in I$. Recall that $M$ has no identically zero rows, so the ultragraph ${\mathcal{U}}_M$ has no sources. Moreover, each $A\in{\mathcal{U}}^0 _M $ is produced by elements in $\{r(i),s(i): i\in I\}$ via finite operations $\cup,\cap$ and $\setminus$. Hence the corresponding operations $P_{A \cap B}:=P_A P_B$, $P_{A\cup B}:=P_A+P_B-P_{A \cap B}$, and $P_{A\setminus B}=P_A- P_{A \cap B}$, generate projections $P_A$ for all $A\in{\mathcal{U}}^0 _M $. Note that since $\{P_{s(i)},P_{r(i)}\;:\; i\in I\}$ are commutative, the definition of $P_A$, for each $A\in {\mathcal{U}}^0 _M$, is well-defined. Now, by defining $S_i:=t_i$ and $U_{A,g}:=P_Aw_g$ for $i\in I$, $A\in{\mathcal{U}}^0 _M$ and $g\in G$, it is routine to verify that $\{S_i,P_A,U_{A,g}\}$ is a $(G,{\mathcal{U}} _M)$-family in ${\mathcal{O}}_{G,I,M}$ and the universality gives again a $*$-homomorphism $\psi:{\mathcal{O}}_{G,{\mathcal{U}}_M}\rightarrow {\mathcal{O}}_{G,I,M}$ such that $\psi$ sends $s_i,p_i,u_{A,g}$ to $S_i,P_i$ and $U_{A,g}$ respectively. Since $\pi \circ \psi$ and $\psi \circ \pi$ are identity on the generators, $\pi$ and $\psi$ are the inverse of each other and we conclude that ${\mathcal{O}}_{G,I,M}\cong{\mathcal{O}}_{G,{\mathcal{U}}_M}$.
\end{proof}

\subsection{A specific self-similar ultragraph associated a self-similar graph}

In the ultragraph ${\mathcal{U}}_{M}$ considered in Proposition  \ref{prop4.10}, every vertex recieves exactly one edge. Conversely, if we have a self-similar ultragraph $(G,{\mathcal{U},\varphi})$ satisfying $|r^{-1}(\{v\})| =1$ for all vertex $v \in U^0$ (such as those in Examples \ref{ex3.4} and \ref{ex3.5}), then we may construct an $M$-invariant triple $(G,I,\varphi)$ such that ${\mathcal{O}}_{G,\mathcal{U}}$ is isomorphic to the $C^*$-algebra ${\mathcal{O}}_{G,I,M}$ in Definition \ref{defn4.7}. (In this case $I=U^0$ and $M$ is the edge matrix of ${\mathcal{U}}$.) In this subsection, we show that every self-similar graph $C^*$-algebra ${\mathcal{O}}_{G,E}$ of \cite{bro14} with a row-finite source-free graph $E$  belongs to this class of self-similar ultragraph $C^*$-algebras (so it will be also of the form of the $C^*$-algebras defined in Definition \ref{defn4.7}).

Let $(G,E,\phi)$ be a  self-similar graph over a row-finite graph $E$ without sources. We want to construct a self-similar ultragraph $(G,{\mathcal{U}}_E,\varphi)$ such that

\begin{enumerate}

  \item every singleton set $\{v\}$ in ${\mathcal{U}}^0$ receives only one edge, and
  \item ${\mathcal{O}}_{G,E_{\mathcal{U}}}  \cong {\mathcal{O}}_{G,{\mathcal{U}}_E}$.
\end{enumerate}

To this end, we decompose every vertex $v\in E^0$ according to edges in $r^{-1}(v)=\{e \in {\mathcal{U}} ^1\; :\; r(e)=v\}$ by putting vertex $v_e$ for each $e\in r^{-1}(v)$. So,
$${U^0_E} =\bigcup _{v\in E^0}\{v_e\; :\; e \in r^{-1}(v)\}=\{v_e\; :\; v\in E^0\; \text {and} \;e \in r^{-1}(v)\}$$
and the edge set is just ${\mathcal{U}}^{1}_E=E^1$, where $s_{\mathcal{U}}(e)=\{s(e)_f \;:\; f\in r^{-1}(s(e))\}$ and $r_{\mathcal{U}}(e)=\{r(e)_e\} $. In fact, $s_{\mathcal{U}}(e)$ is set of all decomposed components from $s(e)$, while $r_{\mathcal{U}}(e)$ is the singleton $\{r(e)_e\} $.

\begin{example}
If $E$ be the graph

\begin{center}
\begin{tikzpicture}[thick]

\node (v) at (-11,0) {$v$};
\node (w) at (-9,0.75) {$w$};
\node (u) at (-9,-0.75) {$u$};

\path[<-] (u) edge node[above=-16pt] {$f\hspace{2mm}$} (v);
\path[<-] (w) edge node[above=-3pt] {$\hspace{-3mm}e$} (v);

\path[<-] (u) edge node[right=1pt] {$l\hspace{2mm}$} (w);

\draw[->] (v) ..controls (-12,-0.75) and (-12,0.75) .. node[left=1pt] {$g$} (v);

\draw[->] (w) ..controls (-7.75,-0.15) and (-7.75,1.6) .. node[right=1pt] {$h$} (w);
\end{tikzpicture}

\end{center}
then  $r^{-1}(v)=\{g\},\; r^{-1}(w)=\{e,h\}$, and $r^{-1}(u)=\{f,l\}$. So, ${U^0_E}=\{v_g, w_e, w_h ,u_f,u_l\}$ and ${\mathcal{U}}^{1}_E=\{g,e,f,h,l\}$ where
$$s_{\mathcal{U}}(g)=r_{\mathcal{U}}(g)=\{v_g\}$$
$$s_{\mathcal{U}}(e)=\{v_g\}\;,\;r_{\mathcal{U}}(e)=\{w_g\}$$
$$s_{\mathcal{U}}(f)=\{v_g\}\;,\;r_{\mathcal{U}}(f)=\{u_g\}$$
$$s_{\mathcal{U}}(h)=\{w_e,w_h\}\;,\;r_{\mathcal{U}}(h)=\{w_h\}$$
$$s_{\mathcal{U}}(l)=\{w_e,w_h\}\;,\;r_{\mathcal{U}}(l)=\{u_l\}.$$
Hence the ultragraph ${\mathcal{U}}_E$ would be

\begin{center}
\begin{tikzpicture}[thick]

\node (v_g) at (-11.5,0) {$v_g$};
\node (w_h) at (-9,1.5) {$w_h$};
\node (w_e) at (-9,0) {$w_e$};
\node (u_l) at (-9,-1.5) {$u_l$};
\node (u_f) at (-9.7,-1.5) {$u_f$};

\path[<-] (w_h) edge node[above=-2pt] {$\hspace{-3mm}e$} (v_g);
\path[<-] (w_e) edge node[above=-1pt] {$\hspace{-3mm}e$} (v_g);

\path[<-] (w_h) edge[bend right] node[right=6pt] {$\hspace{-11mm}h$} (w_e);

\path[<-] (u_l) edge[bend left] node[left=2pt] {$\hspace{1mm}l$} (w_e);
\path[<-] (u_l) edge[bend right] node[right=6pt] {$\hspace{-1mm}l$} (w_h);
\path[<-] (u_f) edge node[above=-16pt] {$f\hspace{2mm}$} (v_g);

\draw[->] (v_g) ..controls (-12.7,-0.75) and (-12.7,0.75) .. node[left=1pt] {$g$} (v_g);

\draw[->] (w_h) ..controls (-7.70,0.75) and (-7.70,2.5) .. node[right=1pt] {$h$} (w_h);

\end{tikzpicture}

\end{center}

Note that the ultragraph ${\mathcal{U}}_E$ is row-finite and source-free as the graph $E$ is.
Furthermore, we may define $\varphi(g,e):=\phi(g,e)$ for every $e\in {\mathcal{U}}_{E}^1$ and $g\in G$, while $g\cdot v_e:=v_{g\cdot e}$, ${g\cdot e}$ are defined as in $(G,E,\phi)$. So we get:
\end{example}
\begin{prop}
Let $(G,E,\phi)$ be a self-similar graph over a row-finite, source-free graph $E$. Then
\begin{enumerate}

  \item The triple $(G,{\mathcal{U}}_E,\varphi)$ defined above is a self-similar ultragraph such that each singleton set $\{v\}\in {\mathcal{U}}_{E}^0$ receives exactly one edge.
  \item The $C^*$-algebra ${\mathcal{O}}_{G,E}$ is isomorphic to ${\mathcal{O}}_{G,{\mathcal{U}}_E}$.
\end{enumerate}
\end{prop}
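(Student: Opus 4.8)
The plan is to establish both assertions by direct verification, building on the constructions already in hand. For part (1), I would first check that $(G,\mathcal{U}_E,\varphi)$ satisfies Definition \ref{defn3.2}. The range/source data are engineered so that $r_{\mathcal{U}}(e)=\{r(e)_e\}$ is always a singleton, hence every singleton $\{v_e\}\in\mathcal{U}_E^0$ receives exactly the one edge $e$; this is immediate from the decomposition $U^0_E=\{v_e : v\in E^0,\ e\in r^{-1}(v)\}$. Row-finiteness and source-freeness of $\mathcal{U}_E$ follow from the corresponding properties of $E$, as already observed in the example. The group action $g\cdot v_e:=v_{g\cdot e}$ is well-defined because $g\cdot e\in r^{-1}(g\cdot v)$ whenever $e\in r^{-1}(v)$, using that $g$ is a graph automorphism of $E$; one then confirms it is an ultragraph automorphism by checking $r_{\mathcal{U}}(g\cdot e)=g\cdot r_{\mathcal{U}}(e)$ and $s_{\mathcal{U}}(g\cdot e)=g\cdot s_{\mathcal{U}}(e)$. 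Finally, setting $\varphi(g,e):=\phi(g,e)$ inherits the $1$-cocycle property from $(G,E,\phi)$, and the weaker condition \eqref{eq1} holds because $\varphi(g,e)\cdot s_{\mathcal{U}}(e)=g\cdot s_{\mathcal{U}}(e)$ (in fact with equality, since $\phi$ already satisfies the graph-level condition $\phi(g,e)\cdot s(e)=g\cdot s(e)$).

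\textbf{The isomorphism.} For part (2), I would exhibit mutually inverse $*$-homomorphisms using universality, in the same spirit as Propositions \ref{prop4..3} and \ref{prop4.10}. Let $\{t_e,q_v,z_{v,g}\}$ generate $\mathcal{O}_{G,E}$ and $\{s_e,p_A,u_{A,g}\}$ generate $\mathcal{O}_{G,\mathcal{U}_E}$. In one direction I would define, for each $v_e\in U^0_E$, $e\in\mathcal{U}_E^1$, $g\in G$, candidate elements inside $\mathcal{O}_{G,E}$: the projection for the singleton $\{v_e\}$ should correspond to the range projection $t_e t_e^*$ of the edge $e$ it uniquely receives (with a suitable convention at source vertices), the edge partial isometry $s_e$ to $t_e$, and the unitary piece $u_{\{v_e\},g}$ to $q_{v_e}z_g$ assembled from the multiplier-algebra unitary $z_g=\sum_{v}z_{v,g}$ as in \ref{prop4..3}. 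Projections $P_A$ for general $A\in\mathcal{U}_E^0$ are then built from these singleton projections via the lattice operations $P_{A\cap B}=P_AP_B$, $P_{A\cup B}=P_A+P_B-P_{A\cap B}$, $P_{A\setminus B}=P_A-P_{A\cap B}$, exactly as in the proof of \ref{prop4.10}; commutativity of the singleton range projections guarantees this is well-defined. I would then verify relations (CK1)--(CK4) together with Definition \ref{defn3.4}(2)--(5), and conversely send the $E$-generators into $\mathcal{O}_{G,\mathcal{U}_E}$ by $t_e\mapsto s_e$, $q_v\mapsto p_{\{v_e : e\in r^{-1}(v)\}}$, $z_{v,g}\mapsto u_{\{\cdots\},g}$. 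Universality yields $*$-homomorphisms both ways, identity on generators, hence inverse isomorphisms.

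\textbf{Main obstacle.} The routine parts are the cocycle and automorphism checks in (1) and the CK1-type lattice relations. The delicate step is reconciling the Cuntz--Krieger relation (CK4) across the two pictures: in $E$ the relation reads $q_v=\sum_{r_E(f)=v}t_f t_f^*$, whereas in $\mathcal{U}_E$ each decomposed vertex $v_e$ receives only the single edge $e$, so (CK4) for $\{v_e\}$ forces $p_{\{v_e\}}=s_e s_e^*$. The content is that summing these singleton relations over $e\in r^{-1}(v)$ must reproduce the original graph relation at $v$; this is where the decomposition of $v$ into the family $\{v_e\}_{e\in r^{-1}(v)}$ does its real work, and where one must be careful that the source sets $s_{\mathcal{U}}(e)=\{s(e)_f : f\in r^{-1}(s(e))\}$ correctly repackage the neighborhood structure of $E$. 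I would handle this by checking, edge by edge, that relation (5) of Definition \ref{defn3.4} for $u_{\{v_e\},g}s_e$ matches $z_g t_e=t_{g\cdot e}z_{\phi(g,e)}$ after multiplying by the appropriate range projection, and that the source-edge sets transform correctly under the $G$-action, so that no collapsing or over-counting occurs when passing between the singleton and graph levels.
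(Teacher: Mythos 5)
Your proposal is correct and follows exactly the route the paper intends: the paper states this proposition without proof (it is presented as an immediate consequence of the construction, to be argued as in Propositions \ref{prop4..3} and \ref{prop4.10}), and your strategy --- verifying the automorphism/cocycle axioms with $\varphi(g,e)\cdot s_{\mathcal U}(e)=g\cdot s_{\mathcal U}(e)$ holding with equality, sending $p_{\{v_e\}}\mapsto t_et_e^*$, $s_e\mapsto t_e$, $u_{\{v_e\},g}\mapsto t_et_e^*z_g$ and conversely $q_v\mapsto\sum_{e\in r^{-1}(v)}s_es_e^*$, then invoking universality both ways --- supplies precisely the missing details. You also correctly isolate the only genuinely nontrivial point, namely that (CK4) at each decomposed vertex $v_e$ reads $p_{\{v_e\}}=s_es_e^*$ and that summing over $e\in r^{-1}(v)$ recovers the graph relation at $v$ (row-finiteness and source-freeness making all sets in $\mathcal U_E^0$ finite unions of such singletons).
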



\section{An inverse semigroup associated to $(G,\mathcal U,\varphi)$}\label{sec5}

Let $(G,\mathcal{U},\varphi)$ be a self-similar ultragraph. Inspired by Proposition \ref{prop3.5}, we define a specific inverse semigroup ${\mathcal{S}}_{G,\mathcal{U}}$ so that the canonical map $\pi:{\mathcal{S}}_{G,\mathcal{U}} \rightarrow   {\mathcal{O}}_{G,\mathcal{U}}   $ is a universal tight $*$-representation in the sense of \cite{exe21}, and hence ${\mathcal{O}}_{G,\mathcal{U}}\cong {C^*}_{\mathrm{tight}}({\mathcal{S}}_{G,\mathcal{U}})$.$\;$The
elements of ${\mathcal{S}}_{G,\mathcal{U}}$ come from the identification (\ref{(3.1)}) and their operations are corresponding with those for $s_{\alpha}u_{A,g} s^*_{\beta}$ in ${\mathcal{O}}_{G,\mathcal{U}}$.

In order to define an appropriate inverse semigroup, we need to add a universal path $\omega$ of length zero such that $\omega \alpha=\alpha \omega =\alpha $ for all $\alpha\in \mathcal{U}^{\geq 1}$. This is compatible with the inverse semigroup associated to a labelled graph in \cite{kum00}, where the edges are labelled with words, and $\om$ is considered as the empty word.

\begin{defn}
Let $(G,\mathcal{U},\varphi)$ be a self-similar ultragraph. The \emph{universal zero-length path} is denoted by $\om$ which satisfies $\om \om=\om$ and $\om \alpha=\alpha \om=\alpha$ for every $\alpha\in \mathcal{U}^{\geq 1}$. So, we will put $s(\om)=r(\om)=U^0$ in computations if the source or range of $\om$ is required. Moreover, we define $g \cdot \om=\om$ and $\varphi(g,\om)=g$ for every $g\in G$.
\end{defn}

\begin{defn}\label{defn4.1}
Let $(G,\mathcal U,\varphi)$ be a self-similar ultragraph and let $\mathcal{U}^\sharp:=\{\om\} \cup \mathcal{U}^{\geq 1}$. Associated to $(G,\mathcal U,\varphi)$, consider the set
\begin{equation}\label{(5...1)}
{\mathcal{S}}_{G,\mathcal{U}}: =\{(\alpha,A,g,\beta): \alpha,\beta \in \mathcal{U}^\sharp, ~ A\in {\mathcal{U}}^0, ~ g\in G ~ \text{and}\; \emptyset\neq A\subseteq{s(\alpha)\cap g \cdot s(\beta})\}\cup \{0\},
\end{equation}
and for every $(\alpha,A,g,\beta),(\gamma,B,h,\delta)$ in ${\mathcal{S}}_{G,\mathcal{U}}$ define the multiplication
\begin{align*}
(\alpha, & A,g,\beta)(\gamma,B,h,\delta)=\\
&\left\{
  \begin{array}{ll}
    \big(\alpha(g\cdot \varepsilon),(g\cdot s(\varepsilon))\cap (\varphi(g,\varepsilon)\cdot B),\varphi(g,\varepsilon)h,\delta \big) & \text{if} ~ \gamma=\beta\varepsilon, ~ \varepsilon\in \mathcal{U}^{\geq 1}, ~ g \cdot r(\varepsilon)\subseteq A, \\
    \big(\alpha, A \cap (g{\varphi(h^{-1},\varepsilon)}^{-1} h^{-1}\cdot s(\beta)),g{\varphi(h^{-1},\varepsilon)}^{-1},\delta (h^{-1}\cdot\varepsilon) \big) & \text{if} ~ \beta=\gamma\varepsilon, ~ \varepsilon\in \mathcal{U}^{\geq 1}, ~ r(\varepsilon)\subseteq B, \\
    \big(\alpha,A\cap (g\cdot  B),g h,\delta \big) & \text{if} ~ \gamma=\beta, ~ A\cap (g \cdot B) \neq \emptyset, \\
    ~ ~ 0 & \text{otherwise},
  \end{array}
\right.
\end{align*}
and the inverse
$$(\alpha,A,g,\beta)^*:=(\beta,g^{-1}\cdot A,g^{-1},\alpha).$$
\end{defn}

 Next, we show that $\mathcal{S}_{G,\mathcal{U}}$ is an inverse semigroup.

\begin{prop}\label{prop4.2}
${\mathcal{S}_{G,\mathcal{U} }}$ defined above, is an inverse semigroup with zero.
\end{prop}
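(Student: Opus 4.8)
The plan is to verify the two defining axioms of an inverse semigroup directly: associativity of the multiplication, and the existence and uniqueness of inverses. For associativity, I would first check that the proposed multiplication is well-defined, i.e. that whenever a product of two elements $(\alpha,A,g,\beta)(\gamma,B,h,\delta)$ falls into one of the three nonzero cases, the resulting quadruple again satisfies the membership constraint $\emptyset \neq A' \subseteq s(\alpha')\cap g'\cdot s(\beta')$ of \eqref{(5...1)}. For the first case, for instance, one computes that the new central set is $(g\cdot s(\varepsilon))\cap(\varphi(g,\varepsilon)\cdot B)$, that the new first coordinate is $\alpha(g\cdot\varepsilon)$ with source $s(g\cdot\varepsilon)=g\cdot s(\varepsilon)$, and that the new group element times the source of the new fourth coordinate gives the required containment; this is exactly the chain of inclusions already carried out in the proof of Proposition \ref{prop3.5}. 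The nonempty condition on the product guarantees $A'\neq\emptyset$ in the relevant case.

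For associativity itself, the cleanest route is to recognize that these quadruples are literally labels for the elements $s_\alpha u_{A,g} s_\beta^*$ appearing in \eqref{(3.1)}, and the multiplication in Definition \ref{defn4.1} is precisely the transcription of the product computation performed in \eqref{(3.2)} inside ${\mathcal{O}}_{G,\mathcal{U}}$ (together with its two symmetric cases). Since multiplication in the $C^*$-algebra is associative, the induced operation on the labels must be associative as well, \emph{provided} the labelling map $(\alpha,A,g,\beta)\mapsto s_\alpha u_{A,g}s_\beta^*$ is injective enough that equal products force equal labels. I would therefore either (a) invoke that the canonical representation $\pi$ is injective on $\sg$ (which is established later via the universal tight representation), or, to avoid circularity, (b) grind out associativity by hand, comparing $\big[(\alpha,A,g,\beta)(\gamma,B,h,\delta)\big](\mu,C,k,\nu)$ with $(\alpha,A,g,\beta)\big[(\gamma,B,h,\delta)(\mu,C,k,\nu)\big]$ across the relevant overlap cases of $\beta$ versus $\gamma$ and $\delta$ versus $\mu$, using the $1$-cocycle identity $\varphi(gh,e)=\varphi(g,h\cdot e)\varphi(h,e)$ and Lemma \ref{lem3.3} to match the group coordinates. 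Safer is to write associativity independently; I expect this bookkeeping to be the main obstacle, since the number of case combinations is large.

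Once associativity is in hand, the inverse is straightforward. For $s=(\alpha,A,g,\beta)$ I would verify $ss^*s=s$ and $s^*ss^*=s^*$ directly from the third ($\gamma=\beta$) case of the multiplication. Concretely, $s^* = (\beta,g^{-1}\cdot A, g^{-1},\alpha)$, and computing $ss^*$ lands in the $\gamma=\beta$ case (with matching first/fourth coordinates) yielding an idempotent of the form $(\alpha, A, 1_G, \alpha)$; multiplying again by $s$ on the right returns $s$. A symmetric computation gives $s^*ss^* = s^*$. For uniqueness of the inverse, I would recall that in any semigroup where every element has at least one such inverse and the idempotents commute, the inverse is automatically unique; so it suffices to check that $\mathcal{E}(\sg)$ is commutative.

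Finally, to see that the idempotents commute, I would identify them: an element $(\alpha,A,g,\beta)$ is idempotent exactly when its square equals itself, which forces $\alpha=\beta$ and $g=1_G$, so that $\mathcal{E}(\sg)=\{(\alpha,A,1_G,\alpha): \alpha\in\mathcal{U}^\sharp,\ \emptyset\neq A\subseteq s(\alpha)\}\cup\{0\}$. The product of two such idempotents $(\alpha,A,1_G,\alpha)$ and $(\gamma,B,1_G,\gamma)$ is computed via the overlap cases and gives, when $\alpha$ and $\gamma$ are comparable, a quadruple whose central set is an intersection governed by the commutative set operations $\cap$ on ${\mathcal{U}}^0$ (closed under $\cap$ by construction), and is $0$ otherwise; in every case the answer is manifestly symmetric in the two factors. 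Hence $\mathcal{E}(\sg)$ is a commutative semilattice, the inverse is unique, and $\sg$ is an inverse semigroup with zero $0$, completing the proof.
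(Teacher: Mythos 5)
Your proposal is correct and follows essentially the same route as the paper: direct case-by-case verification of associativity (the paper, like you, writes out only one representative case), the computation $ss^*s=s$ and $s^*ss^*=s^*$, and reduction of uniqueness of inverses to commutativity of idempotents — the paper invokes Lawson's criterion that $ss^*$ commutes with $tt^*$, which, since every $ss^*$ has the form $(\alpha,A,1_G,\alpha)$, is the same check you perform on $\mathcal{E}(\mathcal{S}_{G,\mathcal{U}})$. You were also right to reject your route (a) as circular; the paper does not use it.
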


\begin{proof}
First, we show that  ${\mathcal{S}_{G,\mathcal{U} }}$ is associative. So, for every $s_1=(\alpha,A,g,\beta),s_2=(\gamma,B,h,\delta)$ and $s_3=(\mu,C,f,\nu)$ in $\mathcal{S}_{G,\mathcal{U}}$, we must prove
\begin{equation}\label{(5..1)}
s_1(s_2 s_3)=(s_1 s_2)s_3.
\end{equation}
According to the multiplication in Definition \ref{defn4.1}, we must consider several cases. In the case $\gamma=\beta\varepsilon$ and $\mu=\delta\eta$ such that $\varepsilon, \eta\in \mathcal{U}^{\geq 1}$ with $ g\cdot r(\varepsilon)\subseteq A$ and $h\cdot r(\eta)\subseteq B$, Definition \ref{defn4.1} implies
\begin{align}
s_1(s_2 s_3)&=(\alpha,A,g,\beta)[(\gamma,B,h,\delta)(\mu,C,f,\nu)]\\
&=(\alpha,A,g,\beta)[(\gamma(h\cdot \eta),(h \cdot s(\eta) )\cap (\varphi(h,\eta)\cdot C),\varphi(h,\eta)f,\nu) ]\nonumber\\
&=(\alpha,A,g,\beta)[(\beta\varepsilon(h\cdot \eta),(h \cdot s(\eta) )\cap (\varphi(h,\eta)\cdot C),\varphi(h,\eta)f,\nu) ] \nonumber \\
&=\big(\alpha(g\cdot \varepsilon (h\cdot \eta)), (g\cdot s(\varepsilon (h\cdot \eta))) \cap (\varphi(g,\varepsilon(h\cdot \eta))\cdot C),\varphi(g,\varepsilon(h\cdot \eta))f,\nu \big)  \nonumber\\
&=\big(\alpha (g\cdot \varepsilon (h\cdot \eta)),(gh\cdot s(\eta)) \cap( gh\cdot C),ghf,\nu \big). \nonumber
\end{align}
(The last equality is obtained from the facts $s(\varepsilon (h \cdot \eta))=h \cdot s(\eta)$ and $\varphi(g, h \cdot \xi)\cdot C= g h \cdot C$ for each $\xi\in \mathcal{U}^{\geq 1}$.)
On the other side, we have
\begin{align}
(s_1s_2)s_3 &=[(\alpha,A,g,\beta)(\gamma,B,h,\delta)](\mu,C,f,\nu)\\
&=[(\alpha (g\cdot \varepsilon),(g\cdot s(\varepsilon))\cap (\varphi(g,\varepsilon)\cdot B),\varphi(g,\varepsilon) h ,\delta ) ](\mu,C,f,\nu)\nonumber\\
&=[(\alpha(g\cdot \varepsilon),(g\cdot s(\varepsilon))\cap (\varphi(g,\varepsilon)\cdot B),\varphi(g,\varepsilon)h,\delta)](\delta\eta,C,f,\nu)\nonumber \\
&=\big(\alpha(g\cdot \varepsilon) (\varphi(g,\varepsilon) h \cdot \eta),(\varphi(g,\varepsilon) h\cdot s(\eta))\cap(\varphi(\varphi(g,\varepsilon)h,\eta)\cdot C) ,\varphi(\varphi(g,\varepsilon) h,\eta) f ,\nu \big)  \nonumber\\
&=\big(\alpha (g\cdot (\varepsilon (h\cdot \eta))),(gh\cdot s(\eta) )\cap (gh\cdot C),ghf,\nu \big),\nonumber
\end{align}
concluding $s_1(s_2 s_3)=(s_1 s_2)s_3$ in this case. In the other cases, (\ref{(5..1)})  can be verified analogously,  hence ${\mathcal{S}_{G,\mathcal{U} }}$ is associative.

Moreover, for any $s=(\alpha,A,g,\beta)$ in ${\mathcal{S}_{G,\mathcal{U} }}$, we get
\begin{align*}
ss^*s&=(\alpha,A,g,\beta)(\beta,g^{-1}\cdot A,g^{-1},\alpha)(\alpha,A,g,\beta)\\
&=(\alpha,A,{{1}_G},\alpha)(\alpha,A,g,\beta)\\
&=(\alpha,A,g,\beta) \\
&=s,
\end{align*}
and similarly $s^*ss^*=s^*$. So it remains to prove that $s^*$ is the unique element of ${\mathcal{S}_{G,\mathcal{U} }}$ satisfying the above identity. By \cite[Theorem 1.1.3]{exe17}, it is equivalent to show that $ss^* $ commutes with  $tt^*$ for all $s,t \in {\mathcal{S}_{G,\mathcal{U} }}$. In order to do this, fix arbitrary elements $s=(\alpha,A,g,\beta)$ and $t=(\gamma,B,h,\delta)$ in ${\mathcal{S}}_{G,\mathcal{U}}$. In the case $\gamma=\alpha\varepsilon$ with $\varepsilon\in \mathcal{U}^{\geq 1}$ and $ r(\varepsilon)\subseteq A$, we have
\begin{align*}
(ss^*)(tt^*)&=[(\alpha,A,g,\beta)(\beta,g^{-1}\cdot A,g^{-1},\alpha)][(\gamma,B,h,\delta)(\delta,h^{-1}\cdot B,h^{-1},\gamma)]\\
&=(\alpha,A,{{1}_G},\alpha)(\gamma,B,{{1}_G},\gamma)\\
&=(\alpha\varepsilon,s(\varepsilon)\cap B,{{1}_G},\gamma)\\
&=(\gamma,s(\gamma)\cap B,{{1}_G},\gamma)\\
&=(\gamma,B,{{1}_G},\gamma),
\end{align*}
and also
\begin{align*}
(tt^*)(ss^*)&=[(\gamma,B,h,\delta)(\delta,h^{-1}\cdot B,h^{-1},\gamma)][(\alpha,A,g,\beta)(\beta,g^{-1}\cdot A,g^{-1},\alpha)]\\
&=(\gamma,B,{{1}_G},\gamma)(\alpha,A,{{1}_G},\alpha)\\
&=(\gamma,s(\gamma)\cap B,{{1}_G},\alpha\varepsilon) \\
&=(\gamma,s(\gamma)\cap B,{{1}_G},\gamma),\\
&=(\gamma,B,{{1}_G},\gamma).
\end{align*}
So, we obtain $(ss^*)(tt^*)=(tt^*)(ss^*)$ in this case. Again, checking this equality in the other cases is straightforward and left to the reader. Therefore, ${\mathcal{S}_{G,\mathcal{U} }}$ is an inverse semigroup.
\end{proof}

The proof of Proposition \ref{prop4.2} shows that every element $ss^*$ is of the form $(\alpha,A,{{1}_G},\alpha)$. So the idempotent semilattice of ${\mathcal{S}_{G,\mathcal{U} }}$ is
$$\mathcal{E}({\mathcal{S}_{G,\mathcal{U}}})=\{(\alpha, A, {{1}_G},\alpha): \alpha \in \mathcal{U}^\sharp, A\subseteq s(\alpha)\}.$$

\begin{defn}\label{defn5.4}
The \emph{ultrapath space} of $\mathcal{U}$ is denoted by
$$\mathcal{P}=\{(\alpha,A)\in \mathcal{U}^\sharp \times \mathcal{U}^0: \emptyset \neq A \subseteq s(\alpha)\}.$$
\end{defn}

Therefore, $(\alpha,A)\mapsto (\alpha,A,{{1}_G},\alpha)$ is a one-to-one correspondence between the ultrapath space of $\mathcal{U}$ and idempotent semilattice $\mathcal{E}({\mathcal{S}_{G,\mathcal{U}}})$. In the sequel, to ease the notation, we denote the idempotent
$$q_{(\alpha,A)}:=(\alpha,A,{{1}_G},\alpha)$$
in ${\mathcal{S}_{G,\mathcal{U} }}$ for any ultrapath $(\alpha,A) \in \mathcal{P}$.

\begin{prop}
Let $(G,\mathcal{U},\varphi)$ be a self-similar ultragraph. For every $q_{(\alpha,A)},q_{(\beta,B)}\in\mathcal{E}({\mathcal{S}_{G,\mathcal{U} }}) $, we have
$$q_{(\alpha,A)}q_{(\beta,B)}=
\left\{
  \begin{array}{ll}
    q_{(\alpha,A\cap B)} & \alpha=\beta ~ \text{and} ~ A\cap B\neq \emptyset, \\
    q_{(\alpha,A)} & \alpha=\beta\gamma ~ with ~ \gamma\in \mathcal{U}^{\geq 1} ~ and ~ r(\gamma)\subseteq B, \\
    q_{(\beta,B)} & \alpha\gamma=\beta ~ with ~ \gamma\in \mathcal{U}^{\geq 1} ~ and ~  r(\gamma)\subseteq A, \\
    0 & \text{otherwise}.
  \end{array}
\right.
$$
\end{prop}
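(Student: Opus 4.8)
The plan is to compute the product $q_{(\alpha,A)}q_{(\beta,B)}=(\alpha,A,1_G,\alpha)(\beta,B,1_G,\beta)$ directly by specializing the general multiplication rule of Definition \ref{defn4.1} to the situation where both factors are idempotents. Since the second word of the first factor and the first word of the second factor are both $\alpha$ and $\beta$ respectively, the three nontrivial branches of that multiplication are governed entirely by how $\alpha$ and $\beta$ relate as paths: either $\beta=\alpha$, or one is a proper extension of the other, or they are incomparable. This matches exactly the four cases in the statement, so the proof is a matter of reading off what each branch produces.

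First I would take the diagonal case $\alpha=\beta$. Here the middle branch of Definition \ref{defn4.1} applies with $g=h=1_G$, giving $(\alpha,A\cap(1_G\cdot B),1_G\cdot 1_G,\alpha)=(\alpha,A\cap B,1_G,\alpha)=q_{(\alpha,A\cap B)}$ whenever $A\cap B\neq\emptyset$, and $0$ otherwise; this yields the first and (partially) the fourth line. Next I would treat $\alpha=\beta\gamma$ with $\gamma\in\mathcal{U}^{\geq 1}$: this is the first branch of the multiplication with the roles set so that $\beta\varepsilon=\alpha$, i.e. $\varepsilon=\gamma$. Substituting $g=1_G$, $h=1_G$, $B\rightsquigarrow A$ and using $\varphi(1_G,\gamma)=1_G$, the branch condition $1_G\cdot r(\gamma)\subseteq B$ becomes $r(\gamma)\subseteq B$, and the output simplifies: the new source set is $(1_G\cdot s(\gamma))\cap(1_G\cdot A)=s(\gamma)\cap A=A$ (since $A\subseteq s(\alpha)=s(\gamma)$ because $(\alpha,A)\in\mathcal{P}$), the new group element is $\varphi(1_G,\gamma)\,1_G=1_G$, and the new words are $\beta(1_G\cdot\gamma)=\beta\gamma=\alpha$ on both sides, giving $q_{(\alpha,A)}$. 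The symmetric case $\beta=\alpha\gamma$ is identical after applying the inverse, or equivalently by repeating the computation with the roles of the two idempotents exchanged, producing $q_{(\beta,B)}$ under $r(\gamma)\subseteq A$.

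The main obstacle, such as it is, is bookkeeping rather than conceptual difficulty: I must verify that the simplifications using $\varphi(1_G,\cdot)=1_G$, $1_G\cdot(\cdot)=\mathrm{id}$, and the containment $A\subseteq s(\alpha)$ (which comes for free from the definition of the ultrapath space $\mathcal{P}$) really collapse the general four-tuple output to the claimed idempotent, and in particular that the resulting source set is exactly $A$ (resp.\ $B$) rather than some smaller intersection. The one genuine subtlety is the nonemptiness condition in the extension cases: when $\alpha=\beta\gamma$ with $r(\gamma)\subseteq B$, the product is automatically nonzero because the branch's defining hypothesis $r(\gamma)\subseteq B$ already guarantees the relevant set is nonempty, so no separate ``$\cap\neq\emptyset$'' caveat is needed there; only in the diagonal case does the product vanish for a nontrivial reason, namely $A\cap B=\emptyset$. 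All remaining configurations of $\alpha,\beta$ fall into the ``otherwise'' branch of Definition \ref{defn4.1} and give $0$, completing the fourth line. I would present the three nonzero cases as short displayed computations and dispatch the last case by appeal to the multiplication rule.
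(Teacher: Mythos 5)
Your proposal is correct and follows essentially the same route as the paper: specialize the multiplication rule of Definition \ref{defn4.1} to two idempotents and read off the four cases, using $\varphi(1_G,\cdot)=1_G$ and $A\subseteq s(\alpha)$ to collapse the output to $q_{(\alpha,A\cap B)}$, $q_{(\alpha,A)}$, $q_{(\beta,B)}$, or $0$. One small bookkeeping remark: in the case $\alpha=\beta\gamma$ you invoke the \emph{first} branch with the factors effectively swapped (i.e.\ you compute $q_{(\beta,B)}q_{(\alpha,A)}$), whereas the product as written falls under the \emph{second} branch; this is harmless since idempotents commute (and the second branch yields $(\alpha,A\cap s(\alpha),1_G,\beta\gamma)=q_{(\alpha,A)}$ directly, exactly as in the paper), but it is worth applying the correct branch in the write-up.
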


\begin{proof}
If $\alpha=\beta$ and $A\cap B\neq \emptyset$, then Definition \ref{defn4.1} says $(\alpha,A,1_G,\alpha)(\alpha,B,1_G,\alpha)=(\alpha,A\cap B,1_G,\alpha)$. Also, if $\alpha=\beta\gamma$ and $r(\gamma)\subseteq B$ then
$$(\alpha,A,{{1}_G},\alpha)(\beta,B,{{1}_G},\beta)=(\alpha,A \cap s(\alpha),{{1}_G},\beta\gamma)=(\alpha,A,{{1}_G},\alpha),$$
while in the case $\beta=\alpha\gamma$ with $r(\gamma)\subseteq A$, we have
$$(\alpha,A,{{1}_G},\alpha)(\beta,B,{{1}_G},\beta)=(\alpha\gamma,s(\beta) \cap B,{{1}_G},\beta)=(\beta,B,{{1}_G},\beta).$$
In the other cases, Definition \ref{defn4.1} implies $q_{(\alpha,A)}q_{(\beta,B)}=0.$
\end{proof}

The above proposition concludes the following result.

\begin{cor}\label{cor4.4}
Let $(G,\mathcal{U},\varphi)$ be a self-similar ultragraph and $q_{(\alpha,A)},q_{(\beta,B)}\in\mathcal{E}({\mathcal{S}_{G,\mathcal{U} }}) $. Then $q_{(\alpha,A)}\le q_{(\beta,B)}$ if and only if one of the following holds:
\begin{enumerate}[(a)]
 \item $\alpha=\beta$ and $A\subseteq B$, or
 \item $\alpha=\beta\gamma$ for some $\gamma \in {\mathcal{U}}^{\geq 1}$ with $r(\gamma)\subseteq B$.
\end{enumerate}
\end{cor}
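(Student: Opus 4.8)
The plan is to derive the order characterization directly from the multiplication formula for idempotents established in the preceding proposition, using the definition of the natural partial order on an inverse semigroup. Recall that $s \le t$ means $s = te$ for some idempotent $e$; since both $q_{(\alpha,A)}$ and $q_{(\beta,B)}$ are themselves idempotents and the idempotent semilattice is commutative, this is equivalent to the more symmetric condition $q_{(\alpha,A)} q_{(\beta,B)} = q_{(\alpha,A)}$. So the whole argument reduces to reading off, from the four-case product formula in the previous proposition, exactly when the product of the two idempotents equals the first one.

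First I would prove the backward direction, which is immediate. In case (a), where $\alpha = \beta$ and $A \subseteq B$, the first case of the product formula gives $q_{(\alpha,A)} q_{(\beta,B)} = q_{(\alpha, A \cap B)} = q_{(\alpha,A)}$ since $A \cap B = A$; note $A \cap B = A \neq \emptyset$ because $A \subseteq s(\alpha)$ is nonempty by the definition of $\mathcal{P}$. In case (b), where $\alpha = \beta\gamma$ with $r(\gamma) \subseteq B$, the second case of the product formula gives $q_{(\alpha,A)} q_{(\beta,B)} = q_{(\alpha,A)}$ directly. Hence in both situations $q_{(\alpha,A)} q_{(\beta,B)} = q_{(\alpha,A)}$, which is precisely $q_{(\alpha,A)} \le q_{(\beta,B)}$.

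For the forward direction I would assume $q_{(\alpha,A)} \le q_{(\beta,B)}$, i.e. $q_{(\alpha,A)} q_{(\beta,B)} = q_{(\alpha,A)} \neq 0$, and run through the four cases of the product formula to see which are compatible with the product being nonzero and equal to $q_{(\alpha,A)}$. The first case forces $\alpha = \beta$ and yields $q_{(\alpha, A \cap B)}$; equating this with $q_{(\alpha,A)}$ forces $A \cap B = A$, that is $A \subseteq B$, giving (a). The second case ($\alpha = \beta\gamma$, $r(\gamma) \subseteq B$) yields exactly $q_{(\alpha,A)}$ and gives (b) outright. The third case ($\beta = \alpha\gamma$, $r(\gamma) \subseteq A$) yields $q_{(\beta,B)}$, and equating with $q_{(\alpha,A)}$ would force $\beta = \alpha$, hence $\gamma$ has length zero, contradicting $\gamma \in \mathcal{U}^{\geq 1}$ (so this case collapses into the first). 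The fourth case gives $0 \neq q_{(\alpha,A)}$, a contradiction. Thus only (a) and (b) survive.

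The argument is essentially bookkeeping, so I do not anticipate a genuine obstacle; the one point requiring mild care is the third case, where one must observe that demanding $q_{(\beta,B)} = q_{(\alpha,A)}$ forces $\alpha = \beta$ and therefore empties the hypothesis $\gamma \in \mathcal{U}^{\geq 1}$, so no new possibility beyond (a) appears. I would also keep the nonemptiness conventions from Definition \ref{defn5.4} in view throughout, since the partial order is only defined among nonzero idempotents and the equality $A \cap B = A$ relies on $A$ being a nonempty subset of $s(\alpha)$.
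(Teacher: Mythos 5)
Your argument is correct and is exactly the route the paper intends: the corollary is stated as an immediate consequence of the preceding product formula for idempotents, and your case-by-case reading of that formula, together with the observation that for idempotents $q_{(\alpha,A)}\le q_{(\beta,B)}$ is equivalent to $q_{(\alpha,A)}q_{(\beta,B)}=q_{(\alpha,A)}$, is the standard verification the authors leave implicit. Your handling of the third and fourth cases (showing they yield nothing new, respectively a contradiction) is the right bookkeeping.
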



\section{Tight spectrum and tight groupoid}\label{sec80}

Given an ultragraph ${\mathcal{U} }$, it is introduced in \cite{bed17} the inverse semigroup
$$\mathcal{S}_{\mathcal{U} }=\{ (\alpha,A,\beta) :  \alpha,\beta \in \mathcal{U}^\sharp, A\in {\mathcal{U} }^0\}$$
for $\mathcal{U}$ such that $C^*(\mathcal{U})\cong C^*_{\mathrm{tight}}({\mathcal{S}_{\mathcal{U} }})\cong C^*({\mathcal{G }_{\mathrm{tight}}}({\mathcal{S}_{\mathcal{U} }}))$. By $(\alpha,A,\beta)\longmapsto (\alpha,A,1_G,\beta)$, $\mathcal{S}_{\mathcal{U} }$ can be embedded in our inverse semigroup ${\mathcal{S}}_{G,\mathcal{U}}$ of Definition \ref{defn4.1} with the same idempotent set
$$\mathcal{E}({\mathcal{S}}_{G,\mathcal{U}})=\{q_{(\alpha,A)}:\alpha\in \mathcal{U}^\sharp, \emptyset \neq A\subseteq s(\alpha)\}$$
So, their filter spaces are same and we may apply the description of tight filters in \cite[Proposition 3.6]{bed17} for tight filters of $\mathcal{S}_{G,\mathcal{U}}$. In this section, we briefly recall the tight spectrum ${\widehat{\mathcal{E}}}_{\mathrm{tight}}({\mathcal{S}}_{G,\mathcal{U}})$ from \cite{bed17} and review the construction of tight groupoid ${\mathcal{G }_{\mathrm{tight}}}({\mathcal{S}_{G,\mathcal{U} }})$.$\;$Then, in the next section, we will describe the action of $\mathcal{S}_{G,\mathcal{U} }$ on ${\widehat{\mathcal{E}}}_{\mathrm{tight}}({\mathcal{S}}_{G,\mathcal{U}})$ in details.

Let $\mathcal{U}=(U^0,\mathcal{U}^1,r,s)$ be an ultragraph. We may consider $\mathcal{U}^0$ as a partially ordered (semi)lattice with the meet $A \wedge B:=A \cap B$ and the partial order $A\subseteq B$. Similarly, for every subset $X\subseteq U^0$, the family
$${\mathcal{B}}_X=\{ A \in \mathcal{U}^0:A \subseteq X  \}$$
is a partially order semilattice as well.

\begin{prop}[{\cite{bed17}}]\label{prop6.1}
Let $(G,\mathcal{U},\varphi)$ be a self-similar ultragraph and ${\mathcal{S}}_{G,\mathcal{U}}$ the associated inverse semigroup. Then every tight filter $\mathcal{F}$ in ${\widehat{\mathcal{E}}}_{\mathrm{tight}}({\mathcal{S}}_{G,\mathcal{U}})$ can be (uniquely) described as one of the following forms:
\begin{enumerate}
  \item $\mathcal{F}$ is associated to a pair $(\alpha,\mathcal{B})$, where $\alpha\in \mathcal{U}^\sharp$ and $\mathcal{B}$ is a filter in the set
$${\mathcal{B}}_{s(\alpha)}:=\{A \in {{\mathcal{U} }^0} : A\subseteq s(\alpha)\},$$
such that $|A|=\infty$ for all $A \in \mathcal{B}$. Then
$$\mathcal{F}={\mathcal{F}}_{(\alpha,\mathcal{B})}:=\{q_{(\alpha,A)}: A \in \mathcal{B}\} \cup \{q_{(\beta,A)} : |\beta|<|\alpha|, \beta={\alpha}_1\ldots \alpha_{|\beta|} ~ \text{and} ~ r(\alpha_{{|\beta|}+1})\subseteq A\}.$$
  \item $\mathcal{F}$ is associated to an infinite path $x=\alpha_1\alpha_2 \cdots \in {\mathcal{U}}^{\infty}$ such that
$$\mathcal{F}={\mathcal{F}}_x:=\{q_{(\beta,A)}\;:\: \beta={\alpha}_1\alpha_2\cdots \alpha_{|\beta|}\; \text{and} \;r(\alpha_{{|\beta|}+1})\subseteq A \subseteq s(\alpha_{|\beta|})\}.$$
\end{enumerate}
Consequently, the tight spectrum $\widehat{\mathcal{E}}_{\mathrm{tight}}({\mathcal{S}}_{G,\mathcal{U}})$ is precisely the set of all filters ${\mathcal{F}}_{(\alpha,\mathcal{B})}$ and ${\mathcal{F}}_x$ which are defined above.
\end{prop}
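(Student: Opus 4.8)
The plan is to characterize tight filters by reducing the problem to the already-understood idempotent semilattice $\mathcal{E}(\mathcal{S}_{G,\mathcal{U}})$. Since the embedding $\mathcal{S}_{\mathcal{U}} \hookrightarrow \mathcal{S}_{G,\mathcal{U}}$ given by $(\alpha,A,\beta) \mapsto (\alpha,A,1_G,\beta)$ preserves the idempotent set, as noted in the excerpt, the semilattices $\mathcal{E}(\mathcal{S}_{\mathcal{U}})$ and $\mathcal{E}(\mathcal{S}_{G,\mathcal{U}})$ are literally identical. Consequently their filter spaces $\widehat{\mathcal{E}}_0$, ultrafilter spaces $\widehat{\mathcal{E}}_\infty$, and tight spectra $\widehat{\mathcal{E}}_{\mathrm{tight}}$ coincide verbatim, because all of these notions depend only on the partial order and meet operation on idempotents, which we have by Corollary \ref{cor4.4} and the preceding proposition. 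Thus my first move is to state explicitly that the group $G$ plays no role in the idempotent structure, so the entire classification reduces to the ordinary ultragraph case treated in \cite{bed17}.

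Having made this reduction, I would then invoke \cite[Proposition 3.6]{bed17} directly: since the idempotent semilattices agree, the description of tight filters for $\mathcal{S}_{\mathcal{U}}$ transports identically to $\mathcal{S}_{G,\mathcal{U}}$. The substance of the proof is therefore simply to transcribe that description into the notation $q_{(\alpha,A)}$ of the present paper and verify that the two stated forms $\mathcal{F}_{(\alpha,\mathcal{B})}$ and $\mathcal{F}_x$ are exactly the tight filters. Concretely, one checks that a filter in $\mathcal{E}(\mathcal{S}_{G,\mathcal{U}})$ is determined by (i) a ``longest common stem'' $\alpha \in \mathcal{U}^\sharp$ together with a compatible filter $\mathcal{B}$ in $\mathcal{B}_{s(\alpha)}$, using the order relation of Corollary \ref{cor4.4}, or (ii) an infinite path $x \in \mathcal{U}^\infty$ when no such finite maximal stem exists. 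The tightness condition—being in the closure of the ultrafilters—forces the infiniteness requirement $|A| = \infty$ for all $A \in \mathcal{B}$ in case (1), reflecting that a filter of finite sets can always be properly enlarged and hence is not tight unless it sits on an infinite-path ``frontier''.

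The one genuine verification worth carrying out carefully is how the order relation of Corollary \ref{cor4.4}(a)--(b) organizes a tight filter into the two listed shapes. Given a tight filter $\mathcal{F}$, I would look at the set of first coordinates $\{\alpha : q_{(\alpha,A)} \in \mathcal{F} \text{ for some } A\}$; by the order relation, any two such paths are comparable (one extends the other), so they form a chain under prefixing. Either this chain has a maximal element $\alpha$, yielding case (1) with $\mathcal{B} = \{A : q_{(\alpha,A)} \in \mathcal{F}\}$, or it is an increasing chain of unbounded length, whose limit is an infinite path $x$, yielding case (2). The closure/tightness hypothesis is what supplies the remaining constraints, namely the range condition $r(\alpha_{|\beta|+1}) \subseteq A$ linking shorter stems to longer ones, and the infiniteness of the sets in $\mathcal{B}$.

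The main obstacle, and the only place where care is needed rather than mere bookkeeping, is the tightness condition itself. Deciding exactly when a filter lies in the closure of $\widehat{\mathcal{E}}_\infty(\mathcal{S}_{G,\mathcal{U}})$—as opposed to being merely a filter in $\widehat{\mathcal{E}}_0$—requires understanding the cover relation on the semilattice and verifying that the basic neighborhoods $N(e; e_1,\dots,e_n)$ meet $\widehat{\mathcal{E}}_\infty$. This is precisely the content abstracted away in \cite{bed17}, so rather than re-derive it I would cite that reference for the tight-filter classification and confine my own argument to the (routine) identification of idempotent semilattices plus the chain analysis above. The uniqueness claim (``uniquely described'') then follows because the stem $\alpha$ or path $x$ and the associated filter $\mathcal{B}$ are recovered canonically from $\mathcal{F}$ by the constructions just described.
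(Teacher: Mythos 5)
Your proposal is correct and follows essentially the same route as the paper: the paper itself gives no independent proof, but justifies the proposition (in the paragraph preceding its statement) by exactly the observation you make, namely that the embedding $(\alpha,A,\beta)\mapsto(\alpha,A,1_G,\beta)$ identifies the idempotent semilattices of $\mathcal{S}_{\mathcal{U}}$ and $\mathcal{S}_{G,\mathcal{U}}$, so the tight spectra coincide and \cite[Proposition~3.6]{bed17} applies verbatim. Your additional chain analysis is a reasonable gloss on the content of that cited result but is not required.
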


Note that, in the tight filters ${\mathcal{F}}_{(\alpha,\mathcal{B})}$, if $\alpha=\om$ then $\mathcal{B}$ is a filter in $\mathcal{U}^0$ and we have
$${\mathcal{F}}_{(\om,\mathcal{B})}=\{q_{(\om,A)}: A\in \mathcal{B}\}.$$

{\bf{ Notation}.}
In the sequel, the tight spectrum ${\widehat{\mathcal{E}}}_{\mathrm{tight}}({\mathcal{S}}_{G,\mathcal{U}})$ characterized above will be denoted by $\mathcal{T}$ for notational convenience.

We recall a basis of compact open sets for the pointwise convergent topology on $\mathcal{T}\subseteq \widehat{\mathcal{E}}_{0}({\mathcal{S}}_{G,\mathcal{U}})$. For any finitely many idempotents $e,{e}_1 ,\ldots ,{e}_n \in {{\mathcal{E}}({\mathcal{S}_{G,{\mathcal{U}}}}})$, define
$$V_{e : {e}_1 ,\ldots ,{e}_n}=V_{e}\backslash \bigcup_{i=1}^{n}{V_{e_i}}=\{{{\mathcal{F}}}\in\mathcal{T} : e\in {{\mathcal{F}}}, \; \;{e}_1, \ldots ,{e}_n \notin {{\mathcal{F}}} \},$$
where $$V_{e}=\{{\mathcal{F}} \in\mathcal{T}  :e\in {{\mathcal{F}}}\}.$$
Then {\cite[Lemmas 2.22 and 2.23]{exe16}} imply that the collection $\{V_{e : {e}_1 ,\ldots ,{e}_n}\;:\;n\geq 0,\; e,e_i \in {{\mathcal{E}}({\mathcal{S}_{G,{\mathcal{U}}}}})\}$ is a basis of compact open sets for the inherited topology on $\mathcal{T}$. Moreover, this topology is Hausdorff. Note that for a filter ${\mathcal{F}}$ we have
$$ e,e'\in {\mathcal{F}}\quad \Longleftrightarrow \quad \quad ee'\in {\mathcal{F}},$$
hence $V_e \cap V_{e'}=V_{ee'}$. This turns out $V_{e : {e}_1 ,\ldots ,{e}_n}=V_e \backslash (V_{e e_1 }\cup\ldots\cup V_{e e_n })$, so in each set $V_{e : {e}_1, \ldots ,{e}_n}$, we can assume ${e}_1, \ldots, {e}_n \leq e$.

\begin{defn}
For any ultrapath $(\alpha,A)$ in $\mathcal{U}$ define
$$\mathrm{Z}(\alpha,A):=V_{q_{(\alpha,A)}}=\{{\mathcal{F}} \in \mathcal{T}: q_{(\alpha,A)}\in {\mathcal{F}}\} $$
which is the set of all tight filters in $\mathcal{T}$ containing $q_{(\alpha,A)}$.
\end{defn}

Furthermore, if ${{\mathcal{F}}}$ is an ultrafilter in $\mathcal{T}$, then \cite[Proposition 2.5]{exe16} says that the sets  $\{\mathrm{Z}(\alpha,A): q_{(\alpha,A)}\in \mathcal{F}\}$ form a neighborhood basis for ${\mathcal{F}}$ in ${\widehat{\mathcal{E}}}_{\infty}({\mathcal{S}}_{G,\mathcal{U}})$. Since $\mathcal{T}={\widehat{\mathcal{E}}}_{\infty}({\mathcal{S}}_{G,\mathcal{U}})$ by \cite[Corollary 6.2]{kum00}, we conclude:

\begin{prop}\label{prop6.3}
The sets $\mathrm{Z}(\alpha,A)$ are a basis of compact open sets for the topology on $\mathcal{T}$.
\end{prop}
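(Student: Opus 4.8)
The plan is to observe that the statement is essentially a packaging of the two facts recorded just before it, so the proof should be short. First I would note that each $\mathrm{Z}(\alpha,A)=V_{q_{(\alpha,A)}}$ is already one of the sets in the known basis $\{V_{e:e_1,\ldots,e_n}\}$, namely the case $n=0$ with $e=q_{(\alpha,A)}$. Since by \cite[Lemmas 2.22 and 2.23]{exe16} that family consists of compact open sets, each $\mathrm{Z}(\alpha,A)$ is automatically compact and open. Thus no separate compactness argument is required: the only genuine content is that this sub-collection $\{\mathrm{Z}(\alpha,A):(\alpha,A)\in\mathcal{P}\}$ already suffices to generate the topology on $\mathcal{T}$.

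To prove that the $\mathrm{Z}(\alpha,A)$ form a basis, I would reduce to a pointwise statement using the standard criterion: a family of open sets is a basis for a topology exactly when, at every point, its members containing that point form a neighborhood basis. So I fix an arbitrary $\mathcal{F}\in\mathcal{T}$ and an open set $U$ with $\mathcal{F}\in U$, and the goal becomes the production of an ultrapath $(\alpha,A)$ with $q_{(\alpha,A)}\in\mathcal{F}$ (equivalently $\mathcal{F}\in\mathrm{Z}(\alpha,A)$) and $\mathrm{Z}(\alpha,A)\subseteq U$.

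This is where the two cited ingredients enter. By \cite[Corollary 6.2]{kum00} we have $\mathcal{T}=\widehat{\mathcal{E}}_{\infty}(\mathcal{S}_{G,\mathcal{U}})$, so the chosen $\mathcal{F}$ is in fact an \emph{ultrafilter}. Then \cite[Proposition 2.5]{exe16} applies and tells us precisely that $\{\mathrm{Z}(\alpha,A):q_{(\alpha,A)}\in\mathcal{F}\}$ is a neighborhood basis for $\mathcal{F}$, which hands us the required $\mathrm{Z}(\alpha,A)$ squeezed between $\mathcal{F}$ and $U$. Since every point of $\mathcal{T}$ also lies in some $\mathrm{Z}(\alpha,A)$ (a filter is nonempty and every idempotent has the form $q_{(\alpha,A)}$), the family covers $\mathcal{T}$, and combining this with the pointwise refinement just obtained shows it is a basis; together with the compact-openness noted above, it is a basis of compact open sets.

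The main obstacle, and the reason the preceding paragraphs of the paper spend effort establishing $\mathcal{T}=\widehat{\mathcal{E}}_{\infty}(\mathcal{S}_{G,\mathcal{U}})$, is that the clean neighborhood-basis result of \cite{exe16} is stated for ultrafilters, whereas a priori a tight filter need not be an ultrafilter. The whole argument therefore hinges on this coincidence of tightness and maximality, which in turn rests on the concrete description of $\mathcal{E}(\mathcal{S}_{G,\mathcal{U}})$ in Proposition \ref{prop6.1}. Were one to avoid invoking that equality, the difficult route would be to expand a general basic set $V_{q_{(\alpha,A)}}\setminus\bigl(V_{q_{(\alpha_1,A_1)}}\cup\cdots\cup V_{q_{(\alpha_n,A_n)}}\bigr)$ as a union of sets $\mathrm{Z}(\gamma,C)$; this forces a careful analysis of which idempotents lie below $q_{(\alpha,A)}$, governed by the order description in Corollary \ref{cor4.4} and the range condition $r(\gamma)\subseteq B$, and is precisely the combinatorial bookkeeping the ultrafilter argument lets us bypass.
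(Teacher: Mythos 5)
Your proof is correct and follows essentially the same route as the paper: the paper likewise deduces the proposition by combining the compact-openness of the sets $V_{e:e_1,\ldots,e_n}$ from \cite[Lemmas 2.22 and 2.23]{exe16} with the identification $\mathcal{T}=\widehat{\mathcal{E}}_{\infty}(\mathcal{S}_{G,\mathcal{U}})$ from \cite[Corollary 6.2]{kum00} and the ultrafilter neighborhood-basis result \cite[Proposition 2.5]{exe16}. Your additional remarks on the covering property and on the harder combinatorial alternative are sound but not needed beyond what the paper records.
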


As said before, the tight groupoid of germs for ${\mathcal{S}_{G,\mathcal{U} }}$ is
$${\mathcal{G }_{\mathrm{tight}}}({\mathcal{S}_{G,\mathcal{U} }})=\{[s,\mathcal{F}] :( s,\mathcal{F})\in {\mathcal{S}_{G,\mathcal{U} }}*\mathcal{T}  ,\;s^*s\in \mathcal{F}\}$$
(see Subsection \ref{sub2.3}). So, the topology on $\mathcal{T}$ induces a natural topology on ${\mathcal{G}}_{\mathrm{tight}}(\mathcal{S}_{G,{\mathcal{U} }})$ consisting the sets of the form $[s,V]=\{[s,\mathcal{F}]:\; \;\mathcal{F} \in V\}$ for open sets $V$ in $\mathcal{T}$ and $s \in \mathcal{S}_{G,{\mathcal{U} }}$. By applying Proposition \ref{prop6.3}, we may consider the sets $[(\alpha,A,g,\beta),\mathrm{Z}(\gamma,B)]$. Let $(\alpha,A,g,\beta)\in {\mathcal{S}_{G,{\mathcal{U} }}}$ and $(\gamma,B)$ be an ultrapath in $\mathcal{U}$. Clearly $s^*s=(\beta,g^{-1}\cdot A,{{1}_G},\beta)\in \mathrm{Z}(\gamma,B)$ implies that $\beta$ is a prefix of $\gamma$, where in the case $\gamma=\beta$  we have also $B\subseteq g^{-1}\cdot A$.

Moreover, in the case $\gamma=\beta \gamma'$ with $\gamma'\in \mathcal{U}^{\geq 1}$ such that $g\cdot r(\gamma')\subseteq A$, for each $[(\alpha,A,g,\beta),\mathcal{F}]\in [(\alpha,A,g,\beta),\mathrm{Z}(\gamma,B)]$, the identity
$$(\alpha,A,g,\beta)(\gamma,B,{{1}_G},\gamma)=\big(\alpha(g\cdot \gamma'), (g \cdot s(\gamma'))\cap (\varphi(g,\gamma')\cdot B), \varphi(g,\gamma'),\gamma \big)$$
gives
$$[(\alpha,A,g,\beta), \mathcal{F}]=[\big(\alpha(g\cdot \gamma'), (g \cdot s(\gamma'))\cap (\varphi(g,\gamma')\cdot B), \varphi(g,\gamma'),\gamma         \big),\mathcal{F}]$$
for all $\mathcal{F} \in \mathrm{Z}(\gamma,B)$. Therefore
$$[(\alpha,A,g,\beta),\mathrm{Z}(\gamma,B)]=[\big(\alpha(g\cdot \gamma'), (g \cdot s(\gamma'))\cap (\varphi(g,\gamma')\cdot B), \varphi(g,\gamma'), \gamma         \big),\mathrm{Z}(\gamma,B)],$$
and we can only consider the sets of the form $[(\alpha,A,g,\beta),\mathrm{Z}(\beta,B)].$
Furthermore, by putting $ \gamma=\beta$ in the above computation, it follows
$$[(\alpha,A,g,\beta), \mathcal{F}]=[(\alpha, A\cap (g\cdot B),g,\beta),\mathcal{F}]\qquad (\forall ~~ \mathcal{F} \in \mathrm{Z}(\beta,B)),$$
and consequently,
$$[(\alpha,A,g,\beta),\mathrm{Z}(\beta,B)]=[(\alpha,A \cap (g \cdot B),g,\beta),\mathrm{Z}(\beta,(g^{-1}\cdot A) \cap B)].$$
Therefore, we obtain the following proposition.

\begin{prop}\label{prop8004}
Let $(G,\mathcal{U},\varphi)$ be a self-similar ultragraph. Then ${\mathcal{G}}_{\mathrm{tight}}(\mathcal{S}_{G,{\mathcal{U} }})$ is an ample groupoid with Hausdorff unit space. Moreover, the sets
$$\Theta(\alpha,A,g,\beta):=[(\alpha,A,g,\beta), \mathrm{Z}(\beta,g^{-1}\cdot A)]$$
for $\alpha,\beta \in \mathcal{U}^\sharp$, $g \in G$ and $A\subseteq s(\alpha)\cap g\cdot s(\beta)$, are compact open bisections generating the topology on ${\mathcal{G}}_{\mathrm{tight}}(\mathcal{S}_{G,{\mathcal{U} }})$.
\end{prop}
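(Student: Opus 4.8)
The plan is to establish the three claims in Proposition~\ref{prop8004} separately: that $\mathcal{G}_{\mathrm{tight}}(\mathcal{S}_{G,\mathcal{U}})$ is ample with Hausdorff unit space, that each $\Theta(\alpha,A,g,\beta)$ is a compact open bisection, and that these sets generate the topology. The Hausdorff unit space claim is essentially already in hand: we identified the unit space with $\mathcal{T}=\widehat{\mathcal{E}}_{\mathrm{tight}}(\mathcal{S}_{G,\mathcal{U}})$, and the paragraph preceding Proposition~\ref{prop6.3} records that this topology is Hausdorff, with the sets $\mathrm{Z}(\alpha,A)$ forming a basis of compact open sets (Proposition~\ref{prop6.3}). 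For ampleness, I would invoke the general theory of tight groupoids of inverse semigroups from \cite{exe21,exe16}: the tight groupoid of any inverse semigroup with zero is \'etale, and when $\widehat{\mathcal{E}}_{\mathrm{tight}}(S)$ has a basis of compact open sets (which Proposition~\ref{prop6.3} supplies), the groupoid is ample. Thus ampleness reduces to citing the established construction together with the compact-open basis we already have.

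Next I would analyze the sets $[s,V]$ that define the groupoid topology. The preceding computation already shows two reductions: first, any germ set $[(\alpha,A,g,\beta),\mathrm{Z}(\gamma,B)]$ with $\gamma=\beta\gamma'$ a proper extension can be rewritten with base $\beta$, so we may assume $\gamma=\beta$; and second, the identity $[(\alpha,A,g,\beta),\mathcal{F}]=[(\alpha,A\cap(g\cdot B),g,\beta),\mathcal{F}]$ for $\mathcal{F}\in\mathrm{Z}(\beta,B)$ lets us normalize the set $A$ against the base so that $\mathrm{Z}(\beta,B)=\mathrm{Z}(\beta,g^{-1}\cdot A)$. Combining these, every basic germ set is of the form $\Theta(\alpha,A,g,\beta)=[(\alpha,A,g,\beta),\mathrm{Z}(\beta,g^{-1}\cdot A)]$, which proves the generation claim. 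The constraint $s^*s=(\beta,g^{-1}\cdot A,1_G,\beta)\in\mathrm{Z}(\beta,g^{-1}\cdot A)$ is automatic since $g^{-1}\cdot A\subseteq g^{-1}\cdot A$, so these germ sets are genuinely nonempty and well-defined whenever $\emptyset\neq A\subseteq s(\alpha)\cap g\cdot s(\beta)$.

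It then remains to verify that each $\Theta(\alpha,A,g,\beta)$ is a compact open bisection. Openness is immediate since $\mathrm{Z}(\beta,g^{-1}\cdot A)$ is open in $\mathcal{T}$ and $[s,\cdot]$ carries opens to opens. For the bisection property I would compute the source and range maps explicitly: using $s([s,\mathcal{F}])=[s^*s,\mathcal{F}]$ and $r([s,\mathcal{F}])=[ss^*,\theta_s(\mathcal{F})]$, I would show that $s$ restricts to a homeomorphism from $\Theta(\alpha,A,g,\beta)$ onto $\mathrm{Z}(\beta,g^{-1}\cdot A)$ and $r$ onto $\mathrm{Z}(\alpha,A)$, with continuous inverses given by attaching the germ of $s$; the injectivity of these restrictions follows from the germ equivalence relation together with the normalization that fixes $A$ relative to the base. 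For compactness, I would use that $\mathrm{Z}(\beta,g^{-1}\cdot A)$ is compact (Proposition~\ref{prop6.3}) and that $[s,\cdot]$ is a homeomorphism onto its image, so the continuous image of a compact set is compact. The main obstacle I anticipate is the bookkeeping in the bisection argument: one must show that distinct tight filters in $\mathrm{Z}(\beta,g^{-1}\cdot A)$ yield distinct germs $[(\alpha,A,g,\beta),\mathcal{F}]$, i.e.\ that the germ map is injective on the slice, which requires checking that if $(\alpha,A,g,\beta)e=(\alpha,A,g,\beta)e'$ on a common filter then the filters coincide --- this is where the precise multiplication rules of Definition~\ref{defn4.1} and the normalization $B=g^{-1}\cdot A$ must be used carefully to rule out accidental identifications arising from the non-Hausdorff phenomena typical of self-similar groupoids.
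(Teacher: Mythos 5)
Your proposal is correct and follows essentially the same route as the paper, which derives the proposition directly from the two reductions computed just before its statement together with the general theory of tight groupoids of germs and the compact open basis $\{\mathrm{Z}(\alpha,A)\}$ from Proposition~\ref{prop6.3}. The only remark worth making is that your anticipated obstacle is not one: the germ relation $(s,\mathcal{F})\sim(t,\mathcal{F}')$ requires $\mathcal{F}=\mathcal{F}'$ by definition, so $\mathcal{F}\mapsto[s,\mathcal{F}]$ is automatically injective on $\mathrm{Z}(\beta,g^{-1}\cdot A)$ and the bisection property follows from the bijectivity of $\theta_s$ without any further bookkeeping.
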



\section{The action of $\mathcal{S}_{G,{\mathcal{U} }}$ on $\mathcal{T} $ }\label{sec800}

After describing the tight spectrum $\mathcal{T}={\mathcal{G}}_{\mathrm{tight}}(\mathcal{S}_{G,{\mathcal{U} }})$ in the previous section, we are going to compute the action $\theta:\mathcal{S}_{G,{\mathcal{U} }}     \curvearrowright   \mathcal{T}$. Recall that, for any $s\in \mathcal{S}_{G,{\mathcal{U} }}$, the map $\theta_s :D^{s^*s}\rightarrow D^{ss^*}$ is defined by $\theta_s (\mathcal{F})=(s\mathcal{F}s^*)\uparrow$. Also if $s=(\alpha,A,g,\beta)$, then $s^*s=(\beta,g^{-1}\cdot A ,1_G,\beta)$ and we have
$$D^{s^*s}=\{\mathcal{F}\in \mathcal{T}\;:\; s^*s\in \mathcal{F}\}=\mathrm{Z}(\beta,g^{-1}\cdot A),$$
where each $\mathcal{F}\in \mathcal{T}$ is of the forms $\mathcal{F}_{(\gamma,{\mathcal{B}})}$ and $\mathcal{F}_{x}$
being described in Proposition \ref{prop6.1}. In order to compute $\theta_{s }({\mathcal{F}})$ precisely, we should divide our proofs for $s=(\alpha,A,g,\beta)$ to the cases $\beta=\om$ and $\beta\in {\mathcal{U} }^{\geq1}$. In the first case, given a tight filter of the form $\mathcal{F}_{(\om,\mathcal{B})}\in \mathcal{T}$, where $\mathcal{B}$ is a filter in $\mathcal{U}^0$, we may have $\theta_{s}({\mathcal{F}}_{(\om,\mathcal{B})})={\mathcal{F}}_{(\alpha,{\mathcal{B}}')}$ for a suitable filter ${\mathcal{B}}'$ in $\mathcal{B}_{s(\alpha)}$. Indeed, we will see in the next proposition that ${\mathcal{B}}'$ would be
$$g\cdot \mathcal{B}\downarrow_{s(\alpha)}:=\{(g\cdot B)\cap s(\alpha): B\in  {\mathcal{B}}  \}.$$
Recall that in case $\alpha=\om$, then $g\cdot \mathcal{B}\downarrow_{s(\alpha)}$ is equal to $g \cdot \mathcal{B}=\{g \cdot B: B\in \mathcal{B}\}$.

First a lemma:

\begin{lem}
 Let $(G,\mathcal{U},\varphi)$ be a self-similar ultragraph and fix $g\in G$. If $\mathcal{B}$ is an ultrafilter in ${\mathcal{U} }^0$, then so is $g\cdot \mathcal{B}$.
\end{lem}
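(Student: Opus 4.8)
The plan is to reduce everything to the observation that $A \mapsto g \cdot A$ is an order-automorphism of the semilattice $\mathcal{U}^0$ fixing its zero element $\emptyset$, and that any such automorphism must carry ultrafilters to ultrafilters. Write $\Phi_g$ for the map $A \mapsto g \cdot A$. First I would establish that $\Phi_g$ is a bijection of $\mathcal{U}^0$ onto itself: Lemma \ref{lem3.1} gives $\Phi_g(\mathcal{U}^0) \subseteq \mathcal{U}^0$, and the same lemma applied to $g^{-1}$ gives $\Phi_{g^{-1}}(\mathcal{U}^0) \subseteq \mathcal{U}^0$, so that $\Phi_{g^{-1}}$ is a two-sided inverse for $\Phi_g$. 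The computations in the proof of Lemma \ref{lem3.1} already show that $\Phi_g$ commutes with $\cap$, $\cup$ and $\setminus$; in particular $g \cdot \emptyset = \emptyset$, and $A \subseteq B$ holds if and only if $g \cdot A \subseteq g \cdot B$ (the converse implication coming from applying $\Phi_{g^{-1}}$). Thus $\Phi_g$ is an isomorphism of partially ordered semilattices fixing $\emptyset$.

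Next I would verify directly that $g \cdot \mathcal{B}$ is a proper filter in $\mathcal{U}^0$ whenever $\mathcal{B}$ is one. It is nonempty since $\mathcal{B}$ is and $\Phi_g$ is a bijection. It is closed under meets because $(g \cdot B_1) \cap (g \cdot B_2) = g \cdot (B_1 \cap B_2)$ lies in $g \cdot \mathcal{B}$, as $B_1 \cap B_2 \in \mathcal{B}$. It is upward closed: if $g \cdot B \subseteq C$ with $C \in \mathcal{U}^0$, then $B \subseteq g^{-1} \cdot C$, so $g^{-1} \cdot C \in \mathcal{B}$ by upward closure of $\mathcal{B}$, and hence $C = g \cdot (g^{-1} \cdot C) \in g \cdot \mathcal{B}$. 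Properness is immediate: the only element sent to $\emptyset$ by the bijection $\Phi_g$ is $\emptyset$ itself, and $\emptyset \notin \mathcal{B}$, so $\emptyset \notin g \cdot \mathcal{B}$.

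The crux is maximality, which is where the symmetric use of $g^{-1}$ matters. Suppose $\mathcal{C}$ is a proper filter with $g \cdot \mathcal{B} \subseteq \mathcal{C}$. Running the previous paragraph with $g^{-1}$ in place of $g$ shows that $g^{-1} \cdot \mathcal{C}$ is again a proper filter, and order-preservation of $\Phi_{g^{-1}}$ gives $\mathcal{B} = g^{-1} \cdot (g \cdot \mathcal{B}) \subseteq g^{-1} \cdot \mathcal{C}$. Maximality of the ultrafilter $\mathcal{B}$ then forces $\mathcal{B} = g^{-1} \cdot \mathcal{C}$, and applying $\Phi_g$ recovers $g \cdot \mathcal{B} = \mathcal{C}$. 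Hence $g \cdot \mathcal{B}$ is a maximal proper filter, that is, an ultrafilter.

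I do not expect a genuine obstacle here: once $\Phi_g$ is seen to be an order-automorphism fixing $\emptyset$ (a minor extension of Lemma \ref{lem3.1}), the filter axioms and properness are purely formal, and the ultrafilter property transfers automatically because order-isomorphisms preserve maximal elements of the poset of proper filters. The only point requiring a little care is remembering that both $\Phi_g$ and $\Phi_{g^{-1}}$ preserve properness, so that the maximality argument can be pushed through symmetrically.
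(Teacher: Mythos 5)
Your proposal is correct and follows exactly the paper's (one-line) argument: the paper also justifies the lemma by noting that $A \mapsto g\cdot A$ is an automorphism of $\mathcal{U}^0$ preserving $\cap$ and the partial order $\subseteq$, so that ultrafilters are carried to ultrafilters. You have simply written out in full the routine verifications (bijectivity via $g^{-1}$, preservation of the filter axioms, properness, and the symmetric maximality argument) that the paper leaves implicit.
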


\begin{proof}
This follows from the fact that the map $A \mapsto g\cdot A$ is an automorphism preserving the set operation $\cap$ and the partial order $\subseteq$.
\end{proof}

\begin{prop}\label{prop11.3}
Let $s=(\alpha,A,g,\om)$ be an element of $\mathcal{S}_{G,{\mathcal{U}}}$. Then

\begin{enumerate}[(1)]
    \item For each $\mathcal{F}_{(\om,\mathcal{B})}\in \mathrm{Z} (\om,g^{-1}\cdot A)$, we have
$$\theta_{s}(\mathcal{F}_{(\om,\mathcal{B})})={\mathcal{F}}_ {(\alpha,g\cdot \mathcal{B}\downarrow_{s(\alpha)})}.$$
    \item For each ${\mathcal{F}}_ {(\gamma,{\mathcal{B}})}\in \mathrm{Z} (\om,g^{-1}\cdot A)$ with $\gamma\in \mathcal{U}^{\geq 1}$, we have
$$\theta_{s}({\mathcal{F}}_ {(\gamma,{\mathcal{B}})})={\mathcal{F}}_{(\alpha(g\cdot\gamma),\varphi(g,\gamma )\cdot  \mathcal{B})}.$$
\end{enumerate}
\end{prop}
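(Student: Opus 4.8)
The plan is to compute $\theta_s(\mathcal{F}) = (s\mathcal{F}s^*)\uparrow$ directly from the definition in Subsection \ref{sub2.3}, using the explicit multiplication rules of Definition \ref{defn4.1} together with the characterization of tight filters in Proposition \ref{prop6.1}. Since $s = (\alpha, A, g, \om)$ has $s^*s = (\om, g^{-1}\cdot A, 1_G, \om)$, every filter in the domain $\mathrm{Z}(\om, g^{-1}\cdot A)$ contains $q_{(\om, g^{-1}\cdot A)}$; by Proposition \ref{prop6.1}, filters of the form $\mathcal{F}_{(\om,\mathcal{B})}$ have $\om$-coordinate, so they sit in this domain exactly when $g^{-1}\cdot A \in \mathcal{B}$ (equivalently $A \in g\cdot\mathcal{B}$), which I would record at the outset of part (1). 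The core computation is to evaluate $s\, q_{(\om,B)}\, s^*$ for a generic idempotent $q_{(\om,B)} \in \mathcal{F}_{(\om,\mathcal{B})}$.

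For part (1), I would first compute $s\, q_{(\om,B)} = (\alpha,A,g,\om)(\om,B,1_G,\om)$. Since $\beta = \gamma = \om$ here, this falls into the third ($\gamma = \beta$) case of the multiplication, yielding $(\alpha, A\cap(g\cdot B), g, \om)$ whenever $A\cap(g\cdot B)\neq\emptyset$. Multiplying on the right by $s^* = (\om, g^{-1}\cdot A, g^{-1}, \alpha)$ then produces the idempotent $q_{(\alpha,\, (g\cdot B)\cap s(\alpha))}$, since the group element collapses to $1_G$ and the range/source intersect down into $s(\alpha)$. This identifies the generators of $s\mathcal{F}s^*$ as precisely $\{q_{(\alpha,\,(g\cdot B)\cap s(\alpha))} : B\in\mathcal{B}\}$, whose upward closure is $\mathcal{F}_{(\alpha,\, g\cdot\mathcal{B}\downarrow_{s(\alpha)})}$ by the form of tight filters in Proposition \ref{prop6.1}. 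I would check that $g\cdot\mathcal{B}\downarrow_{s(\alpha)}$ is indeed a filter in $\mathcal{B}_{s(\alpha)}$ (closed under meets and upward-closed within $\mathcal{B}_{s(\alpha)}$), using that $A\mapsto g\cdot A$ is a $\cap$-preserving automorphism, and that its members remain infinite so that the resulting filter is genuinely tight.

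For part (2), the filter $\mathcal{F}_{(\gamma,\mathcal{B})}$ with $\gamma = \gamma_1\cdots\gamma_{|\gamma|}\in\mathcal{U}^{\geq 1}$ lies in $\mathrm{Z}(\om, g^{-1}\cdot A)$ when its idempotents include $q_{(\om, g^{-1}\cdot A)}$, which by the description in Proposition \ref{prop6.1} forces $r(\gamma_1)\subseteq g^{-1}\cdot A$, i.e. $g\cdot r(\gamma)\subseteq A$. The key evaluation is $s\, q_{(\gamma,B)}\, s^*$ where $\gamma = \om\gamma$ extends $\beta=\om$; the first ($\gamma = \beta\varepsilon$) multiplication case with $\varepsilon = \gamma$ and the condition $g\cdot r(\gamma)\subseteq A$ gives $s\, q_{(\gamma,B)} = (\alpha(g\cdot\gamma), (g\cdot s(\gamma))\cap(\varphi(g,\gamma)\cdot B), \varphi(g,\gamma), \gamma)$, and conjugating by $s^*$ then collapses the group part to $1_G$ and yields $q_{(\alpha(g\cdot\gamma),\, \varphi(g,\gamma)\cdot B)}$ (using $g\cdot s(\gamma) = s(\alpha(g\cdot\gamma))$). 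Taking upward closure over $B\in\mathcal{B}$ produces $\mathcal{F}_{(\alpha(g\cdot\gamma),\, \varphi(g,\gamma)\cdot\mathcal{B})}$.

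The main obstacle I anticipate is the bookkeeping of set-theoretic intersections against $s(\alpha)$ and the careful tracking of the cocycle $\varphi(g,\gamma)$ and its interaction with the group action on the second coordinate, rather than any conceptual difficulty. In particular, verifying that the candidate image really is a \emph{filter} of the correct type (that its sets are nonempty, infinite where required, and that upward closure lands inside the constrained lattice $\mathcal{B}_{s(\alpha)}$ or $\mathcal{B}_{s(\alpha(g\cdot\gamma))}$) will need the automorphism property from Lemma \ref{lem3.1} and the extended-cocycle conventions for $g\cdot\om$ and $\varphi(g,\om)$. I would also confirm that the two cases are exhaustive and mutually consistent on overlaps (e.g. that the $\gamma=\om$ instance of part (2) degenerates correctly to part (1)), so that $\theta_s$ is well-defined on all of $\mathrm{Z}(\om, g^{-1}\cdot A)$.
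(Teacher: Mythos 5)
Your proposal follows essentially the same route as the paper: compute $s\,q_{(\cdot,B)}\,s^*$ directly from the multiplication rules of Definition \ref{defn4.1} and identify the upward closure with the appropriate tight filter from Proposition \ref{prop6.1}, splitting off the $\alpha=\om$ case. The only (harmless) discrepancy is in part (1), where the product $s\,q_{(\om,B)}\,s^*$ literally equals $q_{(\alpha,\,A\cap(g\cdot B))}$ rather than $q_{(\alpha,\,(g\cdot B)\cap s(\alpha))}$; since $A\in g\cdot\mathcal{B}$ and $A\subseteq s(\alpha)$, the two families generate the same filter $g\cdot\mathcal{B}\downarrow_{s(\alpha)}$ under upward closure, so the conclusion is unaffected.
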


\begin{proof}
(1). Let $\mathcal{F}_{(\om,\mathcal{B})}\in \mathrm{Z} (\om,g^{-1}\cdot A)$. By definition, $q_{(\om,g^{-1}\cdot A)}\in \mathcal{F}_{(\om,\mathcal{B})}$, so $g^{-1}\cdot A \in {\mathcal{B}}$ or equivalently $A\in g\cdot {\mathcal{B}}$. In particular, we have $\emptyset    \neq     A\cap (g\cdot B)\in g\cdot {\mathcal{B}}$ for all $B\in {\mathcal{B}}$ because $g\cdot {\mathcal{B}}$ is a filter in ${\mathcal{U}}^0$.

In the case $\alpha\in \mathcal{U}^{\geq 1}$, for any $q_{(\om,B)}\in \mathcal{F}_{(\om,\mathcal{B})}$, we may compute
\begin{align*}
sq_{(\om,B)} s^* &=(\alpha,A,g,\om)(\om,B,1_G,\om)(\om,g^{-1}\cdot A,g^{-1},\alpha)\\
&=(\alpha,A\cap(g\cdot B ),1_G,\alpha) \\
&=q_{(\alpha,A\cap(g\cdot B ))}.
\end{align*}
Hence, the definition of $\theta_s :D^{s^*s}\rightarrow D^{ss^*}$ says that
\begin{align*}
\theta_{s}(\mathcal{F}_{(\om,\mathcal{B})})
&=(s\mathcal{F}_{(\om,\mathcal{B})} s^* )\uparrow\\
&=\{ sq_{(\om,B)}s^*: B\in {{\mathcal{B}}}\}\uparrow\\
&=\{q_{(\alpha,A\cap (g\cdot B))} :  B\in {{\mathcal{B}}}\}\uparrow \\
&=\{q_{(\alpha,g\cdot B)} :  B\in {{\mathcal{B}}} \text{ and}\; B\subseteq g^{-1} \cdot A\}\uparrow  \qquad \text{(because}\; A\cap (g\cdot B) \in g\cdot {{\mathcal{B}}}) \\
&={\mathcal{F}}_ {(\alpha,g\cdot \mathcal{B}\downarrow_{s(\alpha)})}.
\end{align*}

In the case $\alpha =\om$, we can analogously obtain $\theta_{s}(\mathcal{F}_{(\om,\mathcal{B})})={\mathcal{F}}_ {(\om,g\cdot {\mathcal{B}})}.$

For statement $(2)$, let ${\mathcal{F}}_ {(\gamma,{\mathcal{B}})}$ lie in $\mathrm{Z} (\om,g^{-1}\cdot A)$ with $|\gamma|\geq 1$. Similar to part (1), if $q_{(\beta,B)}\in {\mathcal{F}}_ {(\gamma,{\mathcal{B}})}$ (refer to Proposition \ref{prop6.1} for the definition of ${\mathcal{F}}_ {(\gamma,{\mathcal{B}})}$), then
\begin{align}
sq_{(\beta,B)} s^*
&=(\alpha,A,g,\om)(\beta,B,1_G,\beta)(\om,g^{-1}\cdot A,g^{-1},\alpha) \nonumber\\
&=(\alpha(g\cdot \beta),\varphi(g,\beta) \cdot B,1_G,\alpha(g\cdot \beta)) \nonumber\\
&=q_{(\alpha(g\cdot \beta),\varphi(g,\beta) \cdot B )}.\label{(11.1)}
\end{align}

Note that $q_{(\beta,B)}\in {\mathcal{F}}_ {(\gamma,{\mathcal{B}})}$ implies that $\beta$ is a prefix for $\gamma$, while in the case $\beta=\gamma$ we must have $B\in {\mathcal{B}}$. Therefore, $ s q_{(\beta,B)} s^*\in {\mathcal{F}}_{(\alpha(g\cdot \gamma), \varphi(g,\gamma)\cdot B  )}$ by (\ref{(11.1)}) above, and we conclude that
\begin{align*}
\theta_{s}({\mathcal{F}}_ {(\gamma,{\mathcal{B}})})
&=(s({\mathcal{F}}_ {(\gamma,{\mathcal{B}})})s^*)\uparrow \\
&=\{q_{(\alpha(g\cdot \beta),\varphi(g,\beta) \cdot B )} : q_{(\beta,B)}\in {\mathcal{F}}_ {(\gamma,{\mathcal{B}})}\}\uparrow \\
&={\mathcal{F}}_ {( \alpha(g\cdot \gamma),\varphi(g,\gamma)\cdot  {\mathcal{B}})},
\end{align*}
completing the proof.
\end{proof}

Next, we compute $\theta_{s}({\mathcal{F}}_ {x})$ for $x\in {\mathcal{U}}^\infty$. Since $D^{s^*s}= \mathrm{Z} (\beta,g^{-1}\cdot A)$ for $s=(\alpha,A,g,\beta)$, $x$ must be of the form $x=\beta y$ with $r(y)\subseteq g^{-1}\cdot A$.

\begin{prop}
Let $s=(\alpha,A,g,\beta)$ be in $\mathcal{S}_{G,{\mathcal{U} }}$. For every ${\mathcal{F}}_{\beta x}\in \mathrm{Z} (\beta,g^{-1}\cdot A)$ with $x\in {\mathcal{U}}^\infty$, we have
$$\theta_{s}({\mathcal{F}}_ {\beta x})={\mathcal{F}}_ {\alpha(g\cdot x)}.$$
\end{prop}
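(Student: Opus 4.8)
The plan is to compute $\theta_s(\mathcal{F}_{\beta x})=(s\mathcal{F}_{\beta x}s^*)\uparrow$ directly, mirroring the structure of the proof of Proposition \ref{prop11.3}(2), with the main work being to identify the image of each generating idempotent of $\mathcal{F}_{\beta x}$ under conjugation by $s=(\alpha,A,g,\beta)$. First I would recall from Proposition \ref{prop6.1}(2) that the filter associated to the infinite path $\beta x$ is
$$\mathcal{F}_{\beta x}=\{q_{(\mu,B)}: \mu \text{ is a finite prefix of } \beta x \text{ and } r((\beta x)_{|\mu|+1})\subseteq B\subseteq s(\mu)\}.$$
Since ${\mathcal{F}}_{\beta x}\in \mathrm{Z}(\beta,g^{-1}\cdot A)$, the hypothesis guarantees $q_{(\beta,g^{-1}\cdot A)}\in \mathcal{F}_{\beta x}$, so $\beta$ is indeed a prefix of $\beta x$ and $r(y_1)\subseteq g^{-1}\cdot A\subseteq s(\beta)$ where $y=y_1y_2\cdots$; this is exactly the compatibility needed for the juxtaposition $\alpha(g\cdot x)$ to make sense.

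The key computation is to fix a generator $q_{(\mu,B)}\in \mathcal{F}_{\beta x}$ with $|\mu|\geq |\beta|$ (the relevant ones, since conjugation by $s$ only sees prefixes extending $\beta$) and write $\mu=\beta\varepsilon$ for the appropriate $\varepsilon\in \mathcal{U}^{\geq 1}$ (or $\mu=\beta$, $\varepsilon=\om$). Then, exactly as in equation (\ref{(11.1)}) of the previous proposition but now with $\beta\neq \om$, I would apply the multiplication rules of Definition \ref{defn4.1} twice to obtain
$$sq_{(\mu,B)}s^*=(\alpha,A,g,\beta)(\beta\varepsilon,B,1_G,\beta\varepsilon)(\beta,g^{-1}\cdot A,g^{-1},\alpha)=q_{(\alpha(g\cdot \varepsilon),\varphi(g,\varepsilon)\cdot B)}.$$
The first product uses the case $\gamma=\beta\varepsilon$ with $g\cdot r(\varepsilon)\subseteq A$ (valid because $q_{(\mu,B)}\in \mathcal F_{\beta x}$ forces $r(\varepsilon)\subseteq g^{-1}\cdot A$, i.e. $g\cdot r(\varepsilon)\subseteq A$), producing a middle label $\varphi(g,\varepsilon)$; the second product is then a clean-up that returns an idempotent. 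This identifies the conjugated generator as an idempotent supported on the path $\alpha(g\cdot \varepsilon)$, which is precisely a prefix of $\alpha(g\cdot x)$ since $g\cdot x=(g\cdot y)$ has $g\cdot \varepsilon$ as a prefix whenever $\varepsilon$ is a prefix of $y$.

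Assembling these, $(s\mathcal{F}_{\beta x}s^*)\uparrow$ is the upward closure of $\{q_{(\alpha(g\cdot\varepsilon),\varphi(g,\varepsilon)\cdot B)}:\beta\varepsilon \text{ prefix of }\beta x,\ q_{(\beta\varepsilon,B)}\in\mathcal F_{\beta x}\}$, and I would check this coincides with $\mathcal{F}_{\alpha(g\cdot x)}$ as described in Proposition \ref{prop6.1}(2). The main obstacle I anticipate is bookkeeping the range/source conditions under the cocycle twist: one must verify that $r(\alpha(g\cdot x)_{k+1})\subseteq \varphi(g,\varepsilon)\cdot B\subseteq s(\alpha(g\cdot\varepsilon))$ holds exactly when the corresponding condition $r((\beta x)_{|\mu|+1})\subseteq B\subseteq s(\mu)$ holds, using the definitions $g\cdot(\varepsilon\eta)=(g\cdot\varepsilon)(\varphi(g,\varepsilon)\cdot\eta)$ and the range-preservation $r(g\cdot e)=g\cdot r(e)$ together with the cocycle identity. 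Once this correspondence between generators is established in both directions, the upward closures agree and the equality $\theta_s(\mathcal{F}_{\beta x})=\mathcal{F}_{\alpha(g\cdot x)}$ follows; the case $\beta=\om$ is subsumed by taking $\varepsilon=\mu$ throughout and recovers consistency with Proposition \ref{prop11.3}(2).
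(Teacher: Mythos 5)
Your proposal is correct and follows essentially the same route as the paper: conjugate each generator $q_{(\beta\gamma,B)}$ of $\mathcal{F}_{\beta x}$ by $s$ using the multiplication rules of Definition \ref{defn4.1} to obtain $q_{(\alpha(g\cdot\gamma),\varphi(g,\gamma)\cdot B)}$, then match the upward closure with the description of $\mathcal{F}_{\alpha(g\cdot x)}$ in Proposition \ref{prop6.1}(2). The extra bookkeeping you flag (the range/source conditions under the cocycle twist, and the absorption $(g\cdot s(\gamma))\cap(\varphi(g,\gamma)\cdot B)=\varphi(g,\gamma)\cdot B$ via Definition \ref{defn3.2}(3)(b)) is exactly what the paper leaves implicit.
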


\begin{proof}
If ${\mathcal{F}}_{\beta x}\in \mathrm{Z} (\beta,g^{-1}\cdot A)$, then for every $q_{(\beta\gamma,B)}\in {\mathcal{F}}_ {\beta x}$ we have
\begin{align}
sq_{(\beta\gamma,B)} s^*
&=(\alpha,A,g,\beta)(\beta\gamma,B,1_G,\beta\gamma)(\beta,g^{-1}\cdot A,g^{-1},\alpha) \nonumber\\
&=(\alpha(g\cdot \gamma),\varphi(g,\gamma) \cdot B,1_G,\alpha(g\cdot \gamma))\nonumber \\
&=q_{(\alpha(g\cdot \gamma),\varphi(g,\gamma) \cdot B )}.\nonumber
\end{align}
So, the description in Proposition \ref{prop6.1}(2) follows that
$${\theta}_s ({\mathcal{F}}_ {\beta x})=(s  {\mathcal{F}}_ {\beta x} s^*)\uparrow={\mathcal{F}}_ {\alpha(g\cdot x)}$$
as desired.
\end{proof}

To prove Proposition \ref{prop7...5} below we need the following lemma.

\begin{lem}
Let $(G,\mathcal{U},\varphi)$ be a self-similar ultragraph and let $\alpha \in {\mathcal{U}}^{\geq 1}$. If $\mathcal{B}$ is an ultrafilter in ${\mathcal{B}}_{s(\alpha)}=\{A\in {\mathcal{U} }^0\;:\; A\subseteq {s(\alpha)}\}$, then
$${\mathcal{B}}\uparrow_{{\mathcal{U} }^0}:=\{ A\in{\mathcal{U} }^0 : \exists B\in {\mathcal{B}}\text{ such that }B\subseteq A\}$$
is an ultrafilter in ${\mathcal{U} }^0$.
\end{lem}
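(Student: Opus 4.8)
The plan is to verify directly that $\mathcal{B}\uparrow_{\mathcal{U}^0}$ satisfies the three requirements of an ultrafilter in $\mathcal{U}^0$: that it is a filter, that it is proper, and that it is maximal among proper filters. The first two are routine, while the maximality is where the hypothesis that $\mathcal{B}$ is an \emph{ultra}filter in $\mathcal{B}_{s(\alpha)}$ (rather than merely a filter) is genuinely used, and I expect this to be the main obstacle.

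First I would check that $\mathcal{B}\uparrow_{\mathcal{U}^0}$ is a filter. It is nonempty because $\mathcal{B}\neq\varnothing$. It is upward closed by construction: if $A\in\mathcal{B}\uparrow_{\mathcal{U}^0}$ and $A\subseteq C$ in $\mathcal{U}^0$, then some $B\in\mathcal{B}$ has $B\subseteq A\subseteq C$, whence $C\in\mathcal{B}\uparrow_{\mathcal{U}^0}$. For closure under $\cap$, given $A_1,A_2\in\mathcal{B}\uparrow_{\mathcal{U}^0}$ pick $B_1,B_2\in\mathcal{B}$ with $B_i\subseteq A_i$; since $\mathcal{B}$ is a filter, $B_1\cap B_2\in\mathcal{B}$, and $B_1\cap B_2\subseteq A_1\cap A_2\in\mathcal{U}^0$ witnesses $A_1\cap A_2\in\mathcal{B}\uparrow_{\mathcal{U}^0}$. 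Properness is equally short: if $\varnothing\in\mathcal{B}\uparrow_{\mathcal{U}^0}$, then some $B\in\mathcal{B}$ satisfies $B\subseteq\varnothing$, forcing $\varnothing=B\in\mathcal{B}$ and contradicting the properness of the ultrafilter $\mathcal{B}$.

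The key step is maximality. I would take an arbitrary proper filter $\mathcal{G}$ in $\mathcal{U}^0$ with $\mathcal{B}\uparrow_{\mathcal{U}^0}\subseteq\mathcal{G}$ and show $\mathcal{G}\subseteq\mathcal{B}\uparrow_{\mathcal{U}^0}$, the idea being to pull $\mathcal{G}$ back into $\mathcal{B}_{s(\alpha)}$. Note first that $s(\alpha)$ is the top element of $\mathcal{B}_{s(\alpha)}$, so $s(\alpha)\in\mathcal{B}$ by upward closure, and hence $s(\alpha)\in\mathcal{B}\uparrow_{\mathcal{U}^0}\subseteq\mathcal{G}$. Set $\mathcal{G}':=\mathcal{G}\cap\mathcal{B}_{s(\alpha)}$. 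I would then verify that $\mathcal{G}'$ is a proper filter in $\mathcal{B}_{s(\alpha)}$: it is nonempty since $s(\alpha)\in\mathcal{G}'$, closed under $\cap$, upward closed \emph{within} $\mathcal{B}_{s(\alpha)}$, and free of $\varnothing$ because $\mathcal{G}$ is proper. One checks $\mathcal{B}\subseteq\mathcal{G}'$ directly, so the maximality of $\mathcal{B}$ among proper filters in $\mathcal{B}_{s(\alpha)}$ forces $\mathcal{B}=\mathcal{G}'$.

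Finally, given any $C\in\mathcal{G}$, closure of $\mathcal{G}$ under $\cap$ gives $C\cap s(\alpha)\in\mathcal{G}$, and properness of $\mathcal{G}$ rules out $C\cap s(\alpha)=\varnothing$; thus $C\cap s(\alpha)\in\mathcal{G}\cap\mathcal{B}_{s(\alpha)}=\mathcal{G}'=\mathcal{B}$. Since $C\cap s(\alpha)\subseteq C$, this exhibits $C\in\mathcal{B}\uparrow_{\mathcal{U}^0}$, giving the reverse inclusion and hence maximality. The only real care needed throughout is distinguishing upward closure relative to $\mathcal{B}_{s(\alpha)}$ from upward closure relative to $\mathcal{U}^0$, and tracking properness so as to avoid $\varnothing$; notably, no property of the action $G\curvearrowright\mathcal{U}$ or of the cocycle $\varphi$ enters the argument at all.
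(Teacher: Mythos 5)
Your proof is correct. The filterhood and properness checks coincide with the paper's, but your maximality argument runs in the opposite direction: the paper argues by contradiction, taking a filter $\mathcal{F}\supsetneq\mathcal{B}\uparrow_{\mathcal{U}^0}$, choosing $A\in\mathcal{F}\setminus\mathcal{B}\uparrow_{\mathcal{U}^0}$, and building by hand the set $\mathcal{B}'=\mathcal{B}\cup\{A\cap B : B\in\mathcal{B}\}$ as a filter in $\mathcal{B}_{s(\alpha)}$ strictly containing $\mathcal{B}$, contradicting maximality of $\mathcal{B}$ there. You instead restrict a given proper filter $\mathcal{G}\supseteq\mathcal{B}\uparrow_{\mathcal{U}^0}$ down to $\mathcal{G}'=\mathcal{G}\cap\mathcal{B}_{s(\alpha)}$, use maximality of $\mathcal{B}$ to get $\mathcal{G}'=\mathcal{B}$, and then recover $\mathcal{G}\subseteq\mathcal{B}\uparrow_{\mathcal{U}^0}$ via $C\mapsto C\cap s(\alpha)$ (which is legitimate since $s(\alpha)\in\mathcal{B}\subseteq\mathcal{G}$ and $\mathcal{G}$ is proper, so $C\cap s(\alpha)\neq\varnothing$). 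Both arguments reduce to maximality in $\mathcal{B}_{s(\alpha)}$, but yours buys something concrete: the paper's verification that $\mathcal{B}'$ is upward closed (the step asserting that $A\cap B'\subseteq C$ forces $C=A\cap C'$ with $C'\in\mathcal{B}$) is the delicate point of its proof and really requires passing to the upward closure of $\mathcal{B}'$, whereas your restriction $\mathcal{G}'$ inherits the filter axioms from $\mathcal{G}$ immediately and needs no such construction. Your closing observation that neither the action $G\curvearrowright\mathcal{U}$ nor $\varphi$ enters the argument is also accurate; the lemma is a purely lattice-theoretic statement about $\mathcal{U}^0$.
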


\begin{proof}
Clearly, ${{\mathcal{B}}\uparrow}_{{\mathcal{U} }^0}$ is closed under the intersection as ${\mathcal{B}}$ is. Moreover, if $A\in {{\mathcal{B}}\uparrow}_{{\mathcal{U} }^0}$ and $A \subseteq C \in {\mathcal{U} }^0$, then there is $B\in {\mathcal{B}}$ such that $B \subseteq A \subseteq C$, so $C\in {\mathcal{B}}\uparrow_{{\mathcal{U} }^0}$. This says that ${\mathcal{B}}\uparrow_{{\mathcal{U} }^0}$ is a filter in ${\mathcal{U} }^0$.

Now we show that ${\mathcal{B}}\uparrow_{{\mathcal{U} }^0}$ is an ultrafilter in ${\mathcal{U} }^0$. Assume by way of contradiction that $\mathcal{F}$ is a filter in ${\mathcal{U} }^0$ satisfying ${\mathcal{B}}\uparrow_{{\mathcal{U} }^0}\subsetneqq \mathcal{F}$. Pick some $A\in \mathcal{F}\setminus {\mathcal{B}}\uparrow_{{\mathcal{U} }^0}$. Note that for every $B \in \mathcal{B} \subseteq \mathcal{F}$ we have $A\cap B \neq \emptyset$ because $\mathcal{F}$ is a filter. Define
$${\mathcal{B}}':={\mathcal{B}} \cup \{A\cap B\; :\; B\in {\mathcal{B}}\}$$
which is a subset of ${\mathcal{B}}_{s(\alpha)}$. We claim that ${\mathcal{B}}'$ is a filter in ${\mathcal{B}}_{s(\alpha)}$ containing ${\mathcal{B}}$.

To prove the claim, given $A\cap B_1, A\cap B_2\in {\mathcal{B}}'$ we have
$$(A\cap B_1)\cap(A\cap B_2)=A\cap(B_1 \cap B_2),$$
thus $(A\cap B_1)\cap(A\cap B_2)\in {\mathcal{B}}'$ because $B_1 \cap B_2 \in {\mathcal{B}}$ . Furthermore, for every $B\in {\mathcal{B}}'$ and $C\in{\mathcal{B}}_{s(\alpha)} $ with $B\subseteq C$, we show that $C \in {\mathcal{B}}'$. In case $B\in {\mathcal{B}}$, then $C\in {\mathcal{B}}$ because ${\mathcal{B}}$ is a filter in ${\mathcal{B}}_{s(\alpha)}$. In the other case, if $B$ is of the form $B=A\cap B'$ for some $B' \in{\mathcal{B}} $, since $B \subseteq C$, $C$ must be of the form $C=A\cap C' $ for some $B'\subseteq C'\in\mathcal{B} $.
Therefore, $C\in {\mathcal{B}}'$ and the claim is proved.

 Note that for each $B\in {\mathcal{B}}$, $A\cap B$ does not belong to ${\mathcal{B}}$ because $A\cap B \in {\mathcal{B}}$ implies $A\in {\mathcal{B}}$. Thus ${\mathcal{B}}'$ is a filter in ${\mathcal{B}}_{s(\alpha)} $ which properly contains ${\mathcal{B}} $, which contradicts the maximality of ${\mathcal{B}}$ in ${\mathcal{B}}_{s(\alpha)}$. Consequently, ${\mathcal{B}}\uparrow_{{\mathcal{U} }^0}$ is an ultrafilter in ${{\mathcal{U} }^0}$.
\end{proof}

The proof of next proposition is analogous to that of Proposition $\ref{prop11.3}$, so the details are left to the reader.

\begin{prop}\label{prop7...5}
Let $s=(\alpha,A,g,\beta)$ be an element of $\mathcal{S}_{G,{\mathcal{U} }}$ with $|\beta |\geq 1 $. Then, for every tight filter ${\mathcal{F}}_ {(\beta \gamma,\mathcal{B})}\in \mathrm{Z}(\beta, g^{-1} \cdot A)$, we have
$$\theta_{s}({\mathcal{F}}_ {(\beta \gamma,\mathcal{B})})=
\left\{
  \begin{array}{ll}
    {\mathcal{F}}_{\left(\om, (g\cdot \mathcal B) \uparrow_{\mathcal {U}^0}\right)} & \text{ if} ~ \alpha=\gamma=\om \\
    {\mathcal{F}}_ {\left(\alpha(g \cdot \gamma), \varphi(g,\gamma) \cdot \mathcal{B}\right)} & \text{ if} ~|\alpha|\geq 1 ~ \text{or} ~ |\gamma|\geq 1.
  \end{array}
\right.
$$
\end{prop}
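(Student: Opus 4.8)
The plan is to follow the pattern of the proof of Proposition~\ref{prop11.3}, computing $\theta_s(\mathcal{F})=(s\mathcal{F}s^*)\uparrow$ directly on a cofinal family of idempotent generators and then reading off the resulting tight filter from Proposition~\ref{prop6.1}. Fix $s=(\alpha,A,g,\beta)$ with $|\beta|\geq 1$ and a filter $\mathcal{F}_{(\beta\gamma,\mathcal{B})}\in \mathrm{Z}(\beta,g^{-1}\cdot A)$. By Proposition~\ref{prop6.1}(1) the filter $\mathcal{F}_{(\beta\gamma,\mathcal{B})}$ is the upward closure of its smallest generators, namely $q_{(\beta\gamma,B)}$ for $B\in\mathcal{B}$ when $\gamma\in\mathcal{U}^{\geq 1}$, and $q_{(\beta,B)}$ for $B\in\mathcal{B}$ when $\gamma=\om$; since $\theta_s$ is built from $(\cdot)\uparrow$ and conjugation $x\mapsto sxs^*$ respects the order, it suffices to conjugate these generators and take the upward closure of the images.

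The key step is the conjugation identity. Writing $\delta=\beta\gamma'$ for the relevant prefix of $\beta\gamma$ that extends $\beta$, I would multiply $sq_{(\delta,B)}s^*$ out by Definition~\ref{defn4.1}: the factor $sq_{(\delta,B)}$ falls under case (1) of the multiplication (as $\delta=\beta\gamma'$ with $g\cdot r(\gamma')\subseteq A$) and the subsequent multiplication by $s^*$ under case (2), with the group parts combining to $1_G$ (as they must for an idempotent), yielding $q_{(\alpha(g\cdot\gamma'),\,(g\cdot s(\gamma'))\cap(\varphi(g,\gamma')\cdot B))}$. Here I would invoke that $B\subseteq s(\gamma')$ together with condition (3)(b) of Definition~\ref{defn3.2}, extended along the path (so that $\varphi(g,\gamma')\cdot s(\gamma')\subseteq g\cdot s(\gamma')$, which also places the transported set inside $s(\alpha(g\cdot\gamma'))$), to simplify the range set to $\varphi(g,\gamma')\cdot B$ whenever $\gamma'\in\mathcal{U}^{\geq 1}$, exactly as in $(\ref{(11.1)})$. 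Taking upward closures over all $B\in\mathcal{B}$ then reproduces $\mathcal{F}_{(\alpha(g\cdot\gamma),\varphi(g,\gamma)\cdot\mathcal{B})}$ of Proposition~\ref{prop6.1}(1) (infiniteness of the sets is preserved since $A\mapsto \varphi(g,\gamma)\cdot A$ is a bijection), settling the case $|\gamma|\geq 1$; the case $|\alpha|\geq 1$, $\gamma=\om$ follows after the substitutions $g\cdot\om=\om$, $\alpha\om=\alpha$, $\varphi(g,\om)=g$.

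I expect the main obstacle to be the degenerate case $\alpha=\gamma=\om$, where $s=(\om,A,g,\beta)$ collapses the entire length-$|\beta|$ prefix. Here the generic generator is $q_{(\beta,B)}$ with $B\in\mathcal{B}$, and the conjugation gives $sq_{(\beta,B)}s^*=q_{(\om,A\cap(g\cdot B))}$; the intersection with $A$ now survives, because the transported sets must be compared inside all of $\mathcal{U}^0$ rather than inside some $s(\alpha)$, so no path-level version of (3)(b) removes it. The resolution is that the domain constraint $q_{(\beta,g^{-1}\cdot A)}\in\mathcal{F}_{(\beta,\mathcal{B})}$ forces $g^{-1}\cdot A\in\mathcal{B}$, whence the generators $B$ with $B\subseteq g^{-1}\cdot A$ are cofinal in $\mathcal{B}$ and satisfy $A\cap(g\cdot B)=g\cdot B$; thus the image is the upward closure in $\mathcal{E}(\mathcal{S}_{G,\mathcal{U}})$ of $\{q_{(\om,g\cdot B)}:B\in\mathcal{B}\}$, which by Corollary~\ref{cor4.4} is precisely $\mathcal{F}_{(\om,(g\cdot\mathcal{B})\uparrow_{\mathcal{U}^0})}$. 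To confirm this is a legitimate tight filter of type $\mathcal{F}_{(\om,\cdot)}$, I would note that $g\cdot\mathcal{B}$ is an ultrafilter in $\mathcal{B}_{g\cdot s(\beta)}$ (and $g\cdot s(\beta)=s(g\cdot\beta_{|\beta|})$ is a genuine source set), and then apply the preceding lemma that $(g\cdot\mathcal{B})\uparrow_{\mathcal{U}^0}$ is an ultrafilter in $\mathcal{U}^0$. The remaining bookkeeping—that shorter prefixes of $\delta$ and the other multiplication cases contribute nothing beyond what the cofinal generators already produce—is routine and, as the statement indicates, may be left to the reader.
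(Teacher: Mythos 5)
Your proposal is correct and follows exactly the route the paper intends: the paper declares this proof ``analogous to that of Proposition \ref{prop11.3}'' and leaves the details to the reader, and you supply precisely those details --- conjugating the cofinal generators $q_{(\beta\gamma,B)}$, simplifying the resulting range set via the path-extended form of Definition \ref{defn3.2}(3)(b), and handling the degenerate case $\alpha=\gamma=\om$ with the observation that $g^{-1}\cdot A\in\mathcal{B}$ makes the sets $B\subseteq g^{-1}\cdot A$ cofinal, so that the upward closure is $\mathcal{F}_{(\om,(g\cdot\mathcal{B})\uparrow_{\mathcal{U}^0})}$ by Corollary \ref{cor4.4} and the preceding lemma. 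No gaps beyond those already implicit in the paper's own treatment.
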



\section{The tight representation}\label{sec8}

The tight $C^*$-algebra of an inverse semigroup $\mathcal{S}$ with $0$, denoted by $C^*_{\mathrm{tight}}(\mathcal{S})$, is introduced in \cite{{exe21}}, which is generated by a universal tight representation of $\mathcal{S}$. In this section, for a self-similar ultragraph $(G,\mathcal{U},\varphi)$ we prove that $C^*_{\mathrm{tight}}({\mathcal{S}_{G,\mathcal{U} }})\cong {\mathcal{O}_{G,\mathcal{U} }}$. Recall from \cite[Definition 13.1]{{exe21}} that a representation $\pi:\mathcal{S}\rightarrow B(H)$ on a Hilbert space $H$ is called \emph{tight} if for every $X,Y\subseteq \mathcal{E}(\mathcal{S})$ and every finite cover $Z$ for the set
$$\mathcal{E}(\mathcal{S})^{X,Y}:=\left\{e\in \mathcal{E}(\mathcal{S}): e\leq f, \;\; \mathrm{for \;all}\;\; f\in X, ~ \mathrm{and} ~ ef'=0 \;\; \mathrm{for \;all}\;\; f'\in Y\right\},$$
one has
$$\bigvee_{e\in Z} \pi(e)=\bigwedge_{f\in X} \pi(f) \wedge \bigwedge_{f'\in Y}(1-\pi(f')).$$
In addition, we say $\pi$ is a \emph{universal tight representation} if for every tight representation $\pi:\mathcal{S}\rightarrow \mathcal{A}$, there exists a $*$-homomorphism $\psi: \pi(\mathcal{S})\rightarrow \mathcal{A}$ such that $\psi \circ \pi=\phi$.

\begin{defn}[\cite{exe16}]
Let $\mathcal{S}$ be an inverse semigroup with zero and $s$ a fixed element in $\mathcal{S}$.
\begin{enumerate}
  \item The {\emph{principal ideal}} of $\mathcal{E}(\mathcal{S})$ generated by $s$ is defined by
$$\mathcal{J}_s:=\{e\in \mathcal{E}(\mathcal{S}): e\leq s\},$$
that is equal to $\{e\in \mathcal{E}({\mathcal{S}_{G,\mathcal{U} }}): se=e\}$. (Recall that, for any $e\in \mathcal{E}(\mathcal{S})$, $e\leq s$ if and only if $se=e$.)
  \item A subset $\mathcal{C}\subseteq \mathcal{E}(\mathcal{S})$ is called an {\emph{outer cover}} for $s$  if
for any $e\in \mathcal{J}_s$, there exists $f \in\mathcal{C}$ satisfying $ef\neq 0$ (in this case we write $e\Cap f)$.

  \item A {\it{cover}} for $s$ is an outer cover $\mathcal{C}$ contained in $\mathcal{J}_s$.
\end{enumerate}
\end{defn}

\begin{thm}\label{thm7.2}
Let $(G,\mathcal{U},\varphi)$ be a self-similar ultragraph as in Definition $\ref{defn3.2}$. Then the natural map $\pi : \mathcal{S}_{G,{\mathcal{U}}}\rightarrow \mathcal{O}_{G,{\mathcal{U}}}$ defined by $\pi(0)=0$ and
$$\pi(\alpha,A,g,\beta)=s_{\alpha}u_{A,g}s^*_{\beta} \qquad ((\alpha,A,g,\beta)\in \mathcal{S}_{G,{\mathcal{U}}}),$$
where for $\alpha=\om$ we write $s_\alpha:=\mathrm{id}_{\mathcal{M}(\mathcal{O}_{G,\mathcal{U}})}$ the identity in the multiplier algebra $\mathcal{M}(\mathcal{O}_{G,\mathcal{U}})$, is a tight representation.
\end{thm}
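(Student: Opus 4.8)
The plan is to establish the two ingredients of the statement separately: that $\pi$ is a representation (a $*$-preserving semigroup homomorphism), and that it is tight. The structural remark I would exploit is that the tightness condition recalled above refers to $\pi$ only through its restriction to the idempotent semilattice $\mathcal{E}(\mathcal{S}_{G,\mathcal{U}})$. As noted in Section \ref{sec80}, this semilattice is identical to that of the plain ultragraph inverse semigroup $\mathcal{S}_{\mathcal{U}}$ of \cite{bed17}, and $\pi$ restricts on it to $q_{(\alpha,A)}\mapsto s_\alpha p_A s_\alpha^*$, which is exactly the canonical idempotent representation attached to the Cuntz-Krieger $\mathcal{U}$-family $\{s_e,p_A\}\subseteq\mathcal{O}_{G,\mathcal{U}}$ of Definition \ref{defn3.4}(1). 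Thus, once $\pi$ is shown to be a representation, tightness will be inherited directly from the analysis of \cite{bed17}.

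First I would verify that $\pi$ is a representation. The $*$-relation is immediate from Definition \ref{defn3.4}(3), since $(s_\alpha u_{A,g}s_\beta^*)^*=s_\beta u_{g^{-1}\cdot A,g^{-1}}s_\alpha^*=\pi(\beta,g^{-1}\cdot A,g^{-1},\alpha)$, which is $\pi$ of the inverse. For multiplicativity I would run through the three nonzero cases of the product in Definition \ref{defn4.1}; each reproduces verbatim the computation already carried out in (\ref{(3.2)}) of Proposition \ref{prop3.5} (and its adjoint for the case $\beta=\gamma\varepsilon$), using relations (4) and (5) of Definition \ref{defn3.4} to push $u_{A,g}$ past $s_\varepsilon$ and to track the source sets, while in the fourth case the mutually orthogonal ranges of the $s_e$ force both sides to vanish. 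The only bookkeeping point is the universal zero-length path: with the convention $s_{\om}=\mathrm{id}_{\mathcal{M}(\mathcal{O}_{G,\mathcal{U}})}$, every such factor appears next to some $u_{A,g}$ or $s_\beta^*$, so the products stay inside $\mathcal{O}_{G,\mathcal{U}}$ and the case analysis is unaffected.

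For tightness I would reduce, via the standard criterion for semilattice representations as implemented in \cite{bed17}, to showing that $\pi$ carries every finite cover of an idempotent to the join of its images. Two kinds of covers generate all the others. For a vertex $v$ with $v\in s(\alpha)$, the family $\{q_{(\alpha e,s(e))}:r(e)=\{v\}\}$ is a finite cover of $q_{(\alpha,\{v\})}$ (finite by row-finiteness, the covering property being read off from Corollary \ref{cor4.4}), and (CK4) gives
$$\bigvee_{r(e)=\{v\}}\pi\big(q_{(\alpha e,s(e))}\big)=\sum_{r(e)=\{v\}}s_\alpha s_e s_e^* s_\alpha^*=s_\alpha p_{\{v\}}s_\alpha^*=\pi\big(q_{(\alpha,\{v\})}\big).$$
For idempotents at the same path $\alpha$, writing $A=B\cup C$ realizes $\{q_{(\alpha,B)},q_{(\alpha,C)}\}$ as a finite cover of $q_{(\alpha,A)}$, and (CK1) gives $\pi(q_{(\alpha,A)})=s_\alpha p_A s_\alpha^*=s_\alpha(p_B+p_C-p_{B\cap C})s_\alpha^*=\pi(q_{(\alpha,B)})\vee\pi(q_{(\alpha,C)})$. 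Since these are exactly the covers analysed in \cite{bed17} for the common semilattice $\mathcal{E}(\mathcal{S}_{\mathcal{U}})=\mathcal{E}(\mathcal{S}_{G,\mathcal{U}})$, and the images $s_\alpha p_A s_\alpha^*$ satisfy the same (CK1)--(CK4) relations, the verification there applies unchanged, so $\pi|_{\mathcal{E}(\mathcal{S}_{G,\mathcal{U}})}$ is tight and therefore $\pi$ is a tight representation.

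I expect the main obstacle to be conceptual rather than computational: it lies in confirming that no finite covers beyond the two CK-types above must be checked, so that the reduction to \cite{bed17} is legitimate. This is precisely where the standing hypotheses on $\mathcal{U}$ enter — row-finiteness makes the vertex covers finite, and source-freeness rules out degenerate tight filters — while the group data $(g,\varphi)$, which only ever multiply idempotents by the unitaries $u_{\bullet,g}$, create no new covers; hence the cover structure of $\mathcal{E}(\mathcal{S}_{G,\mathcal{U}})$ coincides with that of the ungraded case.
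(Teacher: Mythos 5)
Your overall architecture (representation $+$ cover-to-join on the idempotent semilattice) matches the paper's, and your verification that $\pi$ is a $*$-preserving semigroup homomorphism is fine and does reduce to the computation in Proposition \ref{prop3.5}. The gap is in the tightness half, precisely at the step you yourself flag as the ``main obstacle'': the assertion that the vertex covers $\{q_{(\alpha e,s(e))}:r(e)=\{v\}\}$ and the union covers $\{q_{(\alpha,B)},q_{(\alpha,C)}\}$ ``generate all the others'' is not a theorem you can invoke, and it is exactly where all the content of the proof lives. A general finite cover $\mathcal{C}$ of $q_{(\alpha,A)}$ with $A$ infinite mixes same-path idempotents $q_{(\alpha,B)}$ with longer-path idempotents $q_{(\alpha\gamma,B)}$, $|\gamma|\geq 1$, and to reduce it to your two types one must first prove that the part of $A$ \emph{not} absorbed by the same-path elements is a \emph{finite} set of vertices. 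This is where the hypothesis that each $r(\gamma)$ is a singleton enters: each of the finitely many longer-path idempotents can intersect $q_{(\alpha,\{v\})}$ for only one vertex $v$, so an infinite residual set could not be covered. The paper then still has to iterate (CK4) down to depth $n=\max\{|\gamma|:q_{(\alpha\gamma,B)}\in\mathcal{C}\}$ and run an inductive matching of the resulting idempotents $q_{(\alpha\gamma,s(\gamma))}$ against the longer-path members of $\mathcal{C}$ (with a further refinement $q_{(\alpha\gamma,s(\gamma)\backslash B)}$ when the lengths coincide). None of this is present in, or implied by, your two model covers.

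Two smaller points. First, outsourcing the verification to \cite{bed17} is not automatic even granting the equality $\mathcal{E}(\mathcal{S}_{G,\mathcal{U}})=\mathcal{E}(\mathcal{S}_{\mathcal{U}})$: you would need to argue that cover-to-join for the canonical representation of $\mathcal{S}_{\mathcal{U}}$ in $C^*(\mathcal{U})$ transfers along the $*$-homomorphism $C^*(\mathcal{U})\to\mathcal{O}_{G,\mathcal{U}}$ determined by the Cuntz--Krieger family $\{s_e,p_A\}$ of Definition \ref{defn3.4}(1) (plausible, since cover-to-join is an identity among projections, but it must be said), and that \cite{bed17} works with the same conventions (the paper's $\mathcal{U}^0$ is enlarged to be closed under $\backslash$). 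Second, passing from cover-to-join to tightness requires the dichotomy of \cite[Propositions 11.7 and 11.8]{exe21}; the paper explicitly checks the two cases $U^0\in\mathcal{U}^0$ versus $U^0$ infinite, and your proposal omits this entirely.
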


\begin{proof}
It is straight forward to check that $\pi$ is multiplicative, so we leave it to the reader. Note that in the case  $U^0 \in {\mathcal{U}}^0$, then $\pi(q_{(\om,U^0)})=p_{U^0}$  is the identity of ${ \mathcal{O}_{G,{\mathcal{U}}}}$ and $\{q_{(U^0,U^0)}\}$ is a cover for $\mathcal{E}({\mathcal{S}_{G,{\mathcal{U} }}})$. Otherwise, $U^0$ must be an  infinite set, say $U^0=\{v_1,v_2,\ldots\}$, and certainly $\{q_{(\{v_1\},\{v_1\})},q_{(\{v_2\},\{v_2\})},\ldots\}$ does not have a finite cover. Hence, in each case either condition (i) or (ii) of \cite[Proposition 11.7]{exe21} holds, and we may apply \cite[Proposition 11.8]{exe21} to show that $\pi$ is tight. So, assuming that ${\mathcal{C}} \subseteq \mathcal{E}({\mathcal{S}_{G,\mathcal{U} }})$ is a finite cover for an idempotent $q_{(\alpha,A)}$, we prove
\begin{equation}\label{(7.1)}
\pi(q_{(\alpha,A)})\leq  \bigvee_{q \in \mathcal{C}}{\pi(q)}.
\end{equation}
Since $\mathcal{C}\subseteq {\mathcal{J}}_{q_{(\alpha,A)}}$, Corollary \ref{cor4.4} says that each element of $\mathcal{C}$ is either of the form $q_{(\alpha,B)}$ with $B\subseteq A$ or of the form $q_{(\beta,B)}$ with $\beta= \alpha \gamma$ and $|\gamma|\geq 1$. Divide $ \mathcal{C}$ to the sets $X=\{q_{(\alpha,B)} \;:\; q_{(\alpha,B)} \in \mathcal{C}\}$ and $Y=\{q_{(\alpha\gamma ,B)}\in  \mathcal{C}\;:\; |\gamma|\geq 1\}$.
Note that $X$ is a finite cover for $q_{(\alpha,\bigcup_{q_{(\alpha,B)} \in {X}}{B})}$ and we have
$$\pi (q_{(\alpha,\bigcup_{q_{(\alpha,B) }\in {X}}{B}}) \leq \bigvee_{q_{(\alpha,B)} \in{ X}}{\pi(q_{(\alpha,B)})}.$$

Now we focus on idempotent $p:=q_{(\alpha,A\backslash  \bigcup_{q_{(\alpha,B)}\in{ X}}B)}$. Since $X$ is finite, $A\backslash  \bigcup_{q_{(\alpha,B)}\in{ X}}B$ belongs to $ \mathcal{U}^0$. Moreover, $A\backslash \bigcup_{q_{(\alpha,B)}\in{ X}}B$ must be a finite set of vertices because $Y$ covers idempotent $p$.  Indeed, if $A\backslash\bigcup_{q_{(\alpha,B)}\in{ X}}B=\{v_1,v_2,\ldots\}$ is infinite, then $q_{(\alpha,\{v_i\})}\leq p$ and the set $\{ q_{(\alpha,\{v_i\})} \}_{i=1}^{\infty}$ could not be covered by $Y$ (because $Y$ is finite and each element $q_{(\alpha \gamma,B)}$ in $Y$ meets exactly one $q_{(\alpha,\{v_i\})}$). So, write $A\backslash\bigcup_{q_{(\alpha,B)}\in{X}}B=\{v_1,\ldots,v_t\}$. Let $n=\max \{|\gamma|: q_{(\alpha\gamma,B)}\in Y\} $. Since $\mathcal{U}$ is row-finite and source-free, for any $1\leq i\leq t $, we may write by (CK4)

\begin{align}\label{(7.2)}
\pi(q_{(\alpha,\{v_i\})})&=s_{\alpha}p_{\{v_i\}}s_{\alpha}^*\\
&=\sum_{\substack{\gamma\in \mathcal{U}^n\\r(\gamma)=\{v_i\}\\  }}s_{\alpha\gamma}s_{\alpha\gamma}^*\nonumber\\
&=\bigvee_{\substack{\gamma\in \mathcal{U}^n\\r(\gamma)=\{v_i\}\\  }} \pi(q_{(\alpha\gamma,s(\gamma))}).\nonumber
\end{align}
Hence, each idempotent ${ q_{(\alpha,\{v_i\})}}$ is covered by $\{{ q_{(\alpha\gamma,s(\gamma))}}\;:\; \gamma \in {\mathcal{U}}^n \;\text{and}\; r(\gamma)=\{v_i\}\}$. Fix some $q_{(\alpha\gamma,s(\gamma))}$ where $r(\gamma)=\{v_i\}$ and $|\gamma|=n$. Since $q_{(\alpha\gamma,s(\gamma))}\leq q_{(\alpha,A)}$, there exists $q_{(\beta,B)}\in Y$ such that $q_{(\alpha\gamma,s(\gamma))} \Cap q_{(\beta,B)}$ (note that we have $|\beta|\leq |\alpha\gamma|$ by the definition of $n$). Then, $\beta$ is a prefix for $\alpha\gamma $ and two cases may occur: either $|\beta|< |\alpha\gamma|$ or $\beta=\alpha\gamma$ with $B\subseteq s(\gamma)$. If $\beta=\alpha\gamma$, we may consider the idempotent $q_{(\alpha\gamma,s(\gamma)\backslash B)} $ and repeat the above process for it. In the other case if $|\beta|< |\alpha\gamma|$, then $q_{(\alpha\gamma,s(\gamma))}\leq q_{(\beta,B)}$. Doing this argument inductively for all $\alpha\gamma$, where $\gamma \in {\mathcal{U}}^n$ and $r(\gamma)=\{v_i\}$, (\ref{(7.2)})  gives a finite number of idempotents $ Y_i \subseteq Y$ such that
 $$\pi(q_{(\alpha,\{v_i\}})\leq \bigvee_{q\in Y_i} \pi(q).$$
Therefore, we obtain
\begin{align}
\pi(q_{(\alpha,A)})&=\pi (q_{(\alpha,\bigcup_{q_{(\alpha,B) }\in {X}}{B})}+\sum_{i=1}^{t}\pi(q_{(\alpha,\{v_i\}}) \nonumber\\
&\leq( \bigvee_{q\in X}\pi(q)) \vee (\bigvee_{\substack{q\in Y_i\\1\leq i\leq t\\  }}\pi(q))\nonumber\\
&\leq( \bigvee_{q\in \mathcal{C}}\pi(q)), \nonumber
\end{align}
and (\ref{(7.1)}) is proved. Now \cite[Proposition 11.8]{exe21} (or alternatively \cite[Corolarly 2.3]{exe0045}) concludes that $\pi$ is a tight representation.
\end{proof}

Next goal is showing that the representation $\pi$ in Theorem \ref{thm7.2} is a universal tight one. First,  a lemma:

\begin{lem}\label{lem7.3}
Let $\psi :\mathcal{S}_{G,{\mathcal{U} }}\longrightarrow B(H)$ be a tight representation on a Hilbert space $H$  $($in the sense of {\cite[Defnition 13.1]{exe21}}$)$. If we define
$$S_e := \psi(e,s(e),{{1}_G},\om) ~ \text{and} ~  P_A:=\psi(\om,A,{{1}_G},\om)$$
for $e \in{\mathcal{U}}^1$ and $A\in {\mathcal{U}}^0$, then $\{S_e ,P_A\}$ is a Cuntz-Krieger $\mathcal{U}$-family in the sense of Definition \ref{defn2.3}.
\end{lem}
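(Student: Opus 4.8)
The plan is to verify the four Cuntz-Krieger axioms (CK1)--(CK4) directly from the inverse-semigroup multiplication of Definition~\ref{defn4.1} and the tightness hypothesis on $\psi$. Writing $S_e=\psi(e,s(e),1_G,\om)$ and $P_A=\psi(\om,A,1_G,\om)$, each relation becomes a claim about products of the idempotents $q_{(\om,A)}$ and the generators $(e,s(e),1_G,\om)$, which I can compute in $\mathcal{S}_{G,\mathcal{U}}$ and then transport through the homomorphism $\psi$. Since $\psi$ is a representation, it is in particular multiplicative and $*$-preserving, so products and adjoints in $\mathcal{S}_{G,\mathcal{U}}$ map to the corresponding products and adjoints of the $S_e,P_A$.

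First I would establish (CK1), which involves only idempotents. The relation $q_{(\om,A)}q_{(\om,B)}=q_{(\om,A\cap B)}$ follows from the third clause of the multiplication (the case $\gamma=\beta=\om$), giving $P_AP_B=P_{A\cap B}$; the additivity relation $P_{A\cup B}=P_A+P_B-P_{A\cap B}$ is not an inverse-semigroup identity but follows by inclusion--exclusion once $P_A$ is an \emph{orthogonal} sum over the atoms of the Boolean algebra generated by $A,B$, which in turn rests on the tightness of $\psi$; and $P_\emptyset=0$ is exactly the statement that the zero of $\mathcal{S}_{G,\mathcal{U}}$ goes to $0$. Next, (CK2) reads $S_e^*S_e=P_{s(e)}$: computing $(e,s(e),1_G,\om)^*=(\om,s(e),1_G,e)$ and multiplying gives $(\om,s(e),1_G,e)(e,s(e),1_G,\om)=(\om,s(e),1_G,\om)=q_{(\om,s(e))}$, so applying $\psi$ yields $P_{s(e)}$. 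Similarly (CK3), $S_eS_e^*\le P_{r(e)}$, comes from $(e,s(e),1_G,\om)(\om,s(e),1_G,e)=q_{(e,s(e))}$, and I must check $q_{(e,s(e))}\le q_{(\om,r(e))}$ in $\mathcal{E}(\mathcal{S}_{G,\mathcal{U}})$ using Corollary~\ref{cor4.4}(b) with $\gamma=e$ and $r(e)\subseteq r(e)$, so that $\psi$ preserves this order and $S_eS_e^*\le P_{r(e)}$.

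\textbf{The main obstacle is (CK4)}, the summation relation $P_{\{v\}}=\sum_{r(e)=\{v\}}S_eS_e^*$, since this is precisely where tightness (rather than mere multiplicativity) is indispensable. The plan is to fix $v\in U^0$ and apply the defining tightness condition to a carefully chosen cover: by the row-finiteness and source-freeness standing assumption, $r^{-1}(\{v\})=\{e_1,\dots,e_k\}$ is finite and nonempty, and I claim that $\{q_{(e_i,s(e_i))}:1\le i\le k\}$ is a finite cover for the idempotent $q_{(\om,\{v\})}$ in $\mathcal{E}(\mathcal{S}_{G,\mathcal{U}})$. Verifying this cover condition amounts to showing that every nonzero idempotent below $q_{(\om,\{v\})}$ meets some $q_{(e_i,s(e_i))}$, which follows from Corollary~\ref{cor4.4} together with the fact that any path of length $\ge 1$ with range contained in $\{v\}$ must begin with one of the $e_i$. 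Once the cover is in hand, the tightness identity gives $\bigvee_i\psi(q_{(e_i,s(e_i))})=\psi(q_{(\om,\{v\})})$; since the generators $S_{e_i}$ have mutually orthogonal ranges (the join of orthogonal projections being their sum), the left side equals $\sum_i S_{e_i}S_{e_i}^*$ and the right side is $P_{\{v\}}$, establishing (CK4).

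I would close by remarking that the mutual orthogonality of the ranges $S_{e_i}S_{e_i}^*$, needed to replace $\bigvee$ by $\sum$, follows from the computation $S_{e_i}^*S_{e_j}=\psi\bigl((\om,s(e_i),1_G,e_i)(e_j,s(e_j),1_G,\om)\bigr)=0$ for $i\neq j$, since $e_i\neq e_j$ are not comparable as prefixes and the product falls into the ``otherwise'' case of Definition~\ref{defn4.1}. Collecting (CK1)--(CK4) then shows that $\{S_e,P_A\}$ is a Cuntz-Krieger $\mathcal{U}$-family, completing the proof.
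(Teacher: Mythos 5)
Your proposal is correct and follows essentially the same route as the paper: (CK1) and (CK4) are obtained by applying tightness to finite covers of $q_{(\om,A)}$ and $q_{(\om,\{v\})}$ by pairwise orthogonal idempotents (the disjoint pieces $A\setminus B$, $B\setminus A$, $A\cap B$ for the union relation, and $\{q_{(e_i,s(e_i))}\}_{e_i\in r^{-1}(\{v\})}$ for the summation relation), with the cover condition checked via Corollary \ref{cor4.4}, while (CK2), (CK3) and the mutual orthogonality of ranges are direct semigroup computations pushed through the multiplicative map $\psi$. The only difference is cosmetic: you spell out the steps the paper calls ``easy'' and compress the two-step treatment of (CK1) into a single atom decomposition.
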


\begin{proof}
In order to verify  (CK1), we first claim that $P_{A\backslash B}=P_A-P_B$ for any $B\subseteq A \in  {\mathcal{U}}^0$. For convenience, let us write an idempotent $q_{(\om,A)}=(\om,A,1_G,\om)$ by $q_A$. Note that $\{q_B,q_{A\backslash B}\}$ is a cover for $q_A$. Indeed, if $q_{(\alpha,C)}\leq q_A$, we then have either
$$
\left\{
  \begin{array}{ll}
    r({\alpha})\subseteq A & \text{if}\;|\alpha|\geq 1,~ \text{or} \\
    C\subseteq A & \text{if}\; \alpha=\om
  \end{array}
\right.
~~ \Longrightarrow ~~
\left\{
  \begin{array}{lll}
    r({\alpha})\subseteq B &  \text{or} ~ r({\alpha})\subseteq {A\backslash B} & \text{if } |\alpha|\geq 1 \\
    C\cap B\neq \emptyset &  \text{or}  ~ C\cap  {A\backslash B}\neq \emptyset  & \text{if } \alpha=\om,
  \end{array}
\right.
$$
and consequently either $q_{(\alpha,C)}\Cap q_B$ or $q_{(\alpha,C)}\Cap q_{A\backslash B}$. Since idempotents $q_B ,\; q_{A\backslash B}$ are orthogonal, the tightness of $\psi$ implies
$$P_A=\psi(q_A) =\psi(q_B) \vee \psi(q_{A\backslash B})=\psi(q_B) + \psi(q_{A\backslash B})=P_B+P_{A\backslash B},$$
and the claim is proved.

Now we consider arbitrary $A,B \in {\mathcal{U}}^0$. By a similar argument as above, one may show that $\mathcal{C}=\{ q_{A \backslash A\cap B},\;q_{B \backslash A\cap B}, q_{A\cap B}\}$ is a cover for $q_{A\cup B}$ containing pairwise orthogonal idempotents. Therefore,
\begin{align}
P_{A\cup B}&=\psi(q_{A\cup B})\nonumber\\
&=\psi(q_{A\backslash A\cap B}) \vee \psi(q_{B\backslash A\cap B}) \vee \psi(q_{A\cap B}) \nonumber\\
&= P_{A\backslash A\cap B}+P_{B\backslash A\cap B}+P_{ A\cap B}\nonumber\\
&= (P_{A}-P_{ A\cap B})+(P_{B}-P_{ A\cap B})+P_{ A\cap B}\qquad\;\; (\text{by the above claim})\nonumber\\
&= P_{A}+P_{B}+P_{ A\cap B},\nonumber
\end{align}
 proving (CK1). Checking (CK2) and (CK3) are easy.

For (CK4), fix some singleton $\{v\} \in {\mathcal{U}}^0$ and let $r^{-1}(\{v\})=\{{e}_1,\ldots,{e}_n\}\subseteq {\mathcal{U}}^1$. Then $\{q_{({e}_i,s({e}_i))}\}_{i=1}^{n}$  is a cover for $q_{\{v\}}$. Indeed, given any $q_{(\gamma,A)}\leq q_{\{v\}}$, we have either $\gamma=\om$ and $A=\{v\}$, or $|\gamma|\geq 1 $ with $A\subseteq s(\gamma)$ by Corollary \ref{cor4.4}. In the former case, we have clearly $(q_{\{v\}}=)q_{(\gamma,A)}\Cap q_{({e}_i,s({e}_i))}$ for all $1\leq i \leq n$. In the other one, since $r(\gamma)=\{v\}$, there exists ${e}_i \in r^{-1}(\{v\})$ such that  $\gamma={e}_i \tau$ for some $\tau \in {\mathcal{U}}^*$, and thus $q_{(\gamma,A)}\Cap q_{({e}_i,s({e}_i))}$. Consequently, $\{q_{({e}_i,s({e}_i))}\}_{i=1}^{n}$ is a cover for $q_{\{v\}}$. As the idempotents $q_{({e}_i,s({e}_i))}$ are pairwise orthogonal, we can write
$$
P_{\{v\}}=\psi(q_{\{v\}})
=\bigvee_{i=1}^{n} \psi(q_{({e}_i,s({e}_i))}
=\sum_{i=1}^{n} \psi(q_{({e}_i,s({e}_i))})
= \sum_{i=1}^{n} S_{{e}_i}S_{{e}_i}^{*},$$
concluding (CK4) for $\{S_e,P_A\}$. We are done.
\end{proof}

Finally, we prove the main result of this section.

\begin{thm}\label{thm7.4}
Let $(G,\mathcal{U},\varphi)$ be a self-similar ultragraph as in Definition $\ref{defn3.2}$. Then the representation $\pi:\mathcal{S}_{G,{\mathcal{U} }}\longrightarrow \mathcal{O}_{G,{\mathcal{U} }}$ in Theorem \ref{thm7.2} is a universal tight representation, in the sense that if $\psi :\mathcal{S}_{G,{\mathcal{U} }}\longrightarrow \mathcal{A}$ is a tight representaition in a $C^*$-algebra $\mathcal{A}$, then there exists a unique $*$-homomorphism $\phi : \mathcal{O}_{G,{\mathcal{U} }}\longrightarrow\mathcal{A} $ such that $\psi=\phi \circ \pi$. Consequently, we have $\mathcal{O}_{G,{\mathcal{U} }} \cong C^{*}_{\mathrm{tight}}(\mathcal{S}_{G,{\mathcal{U} }})$.
\end{thm}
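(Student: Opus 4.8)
The plan is to manufacture the required $*$-homomorphism $\phi$ out of a $(G,\mathcal{U})$-family built from $\psi$, and then verify $\psi=\phi\circ\pi$ element by element. Concretely, given a tight representation $\psi:\mathcal{S}_{G,\mathcal{U}}\to\mathcal{A}$, I would set
$$P_A:=\psi(\omega,A,{1}_G,\omega),\qquad S_e:=\psi(e,s(e),{1}_G,\omega),\qquad U_{A,g}:=\psi(\omega,A,g,\omega)$$
for $A\in\mathcal{U}^0$, $e\in\mathcal{U}^1$, $g\in G$, and show that $\{P_A,S_e,U_{A,g}\}$ satisfies the five relations of Definition \ref{defn3.4}. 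Relation (1) is exactly the content of Lemma \ref{lem7.3}. Relations (2)--(4) are immediate from $\psi$ being a $*$-representation together with the product and involution of Definition \ref{defn4.1}; for instance $U_{A,g}^*=\psi\big((\omega,A,g,\omega)^*\big)=\psi(\omega,g^{-1}\cdot A,g^{-1},\omega)=U_{g^{-1}\cdot A,g^{-1}}$, and $U_{A,g}U_{B,h}=\psi\big((\omega,A,g,\omega)(\omega,B,h,\omega)\big)=U_{A\cap(g\cdot B),gh}$.

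The delicate relation is (5). Multiplying in $\mathcal{S}_{G,\mathcal{U}}$ gives, when $g\cdot r(e)\subseteq A$,
$$U_{A,g}S_e=\psi\big((\omega,A,g,\omega)(e,s(e),{1}_G,\omega)\big)=\psi\big(g\cdot e,\ (g\cdot s(e))\cap(\varphi(g,e)\cdot s(e)),\ \varphi(g,e),\ \omega\big),$$
whereas $S_{g\cdot e}U_{g\cdot s(e),\varphi(g,e)}=\psi\big(g\cdot e,g\cdot s(e),\varphi(g,e),\omega\big)$. These two expressions coincide precisely because $\varphi(g,e)\cdot s(e)=g\cdot s(e)$ as subsets of $U^0$. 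Although Definition \ref{defn3.2}(3)(b) postulates only the inclusion $\varphi(g,e)\cdot s(e)\subseteq g\cdot s(e)$, applying that same inclusion to the pair $(g^{-1},g\cdot e)$ and using $\varphi(g^{-1},g\cdot e)=\varphi(g,e)^{-1}$ from Lemma \ref{lem3.3} yields $\varphi(g,e)^{-1}\cdot(g\cdot s(e))\subseteq s(e)$, i.e.\ the reverse inclusion, so equality in fact holds. I expect this reconciliation of the two vertex-sets to be the main point requiring care: it is the exact place where the weaker ultragraph hypothesis (3)(b) must be pushed to deliver what relation (5) demands, and it is, incidentally, the same identity underlying the (asserted) multiplicativity of $\pi$ in Theorem \ref{thm7.2}.

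Once $\{P_A,S_e,U_{A,g}\}$ is confirmed to be a genuine $(G,\mathcal{U})$-family, the universal property of $\mathcal{O}_{G,\mathcal{U}}$ yields a $*$-homomorphism $\phi:\mathcal{O}_{G,\mathcal{U}}\to\mathcal{A}$ with $\phi(p_A)=P_A$, $\phi(s_e)=S_e$, $\phi(u_{A,g})=U_{A,g}$. To obtain $\psi=\phi\circ\pi$ I would first check by induction on $|\alpha|$ that $S_\alpha:=S_{e_1}\cdots S_{e_n}=\psi(\alpha,s(\alpha),{1}_G,\omega)$, and then compute, using Definition \ref{defn4.1} and the standing constraint $A\subseteq s(\alpha)\cap g\cdot s(\beta)$,
$$\phi\big(\pi(\alpha,A,g,\beta)\big)=S_\alpha U_{A,g}S_\beta^*=\psi(\alpha,A,g,\omega)\,\psi(\omega,s(\beta),{1}_G,\beta)=\psi(\alpha,A,g,\beta).$$
Uniqueness of $\phi$ is automatic, since $\{p_A,s_e,u_{A,g}\}$ generate $\mathcal{O}_{G,\mathcal{U}}$ and are the images under $\pi$ of $q_{(\omega,A)}$, $(e,s(e),{1}_G,\omega)$ and $(\omega,A,g,\omega)$. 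Finally, Theorem \ref{thm7.2} shows $\pi$ is tight, and the universal property just established exhibits $\mathcal{O}_{G,\mathcal{U}}$ as the $C^*$-algebra receiving the universal tight representation of $\mathcal{S}_{G,\mathcal{U}}$; comparing this with the defining universal property of $C^*_{\mathrm{tight}}(\mathcal{S}_{G,\mathcal{U}})$ produces mutually inverse $*$-homomorphisms between the two algebras, whence $\mathcal{O}_{G,\mathcal{U}}\cong C^*_{\mathrm{tight}}(\mathcal{S}_{G,\mathcal{U}})$.
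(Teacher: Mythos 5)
Your proposal is correct and follows essentially the same route as the paper: build the $(G,\mathcal{U})$-family $\{P_A,S_e,U_{A,g}\}$ from $\psi$ using Lemma~\ref{lem7.3} for the Cuntz--Krieger relations and the semigroup multiplication for the rest, then invoke the universal property of $\mathcal{O}_{G,\mathcal{U}}$ to get $\phi$ with $\psi=\phi\circ\pi$. Your explicit derivation of the equality $\varphi(g,e)\cdot s(e)=g\cdot s(e)$ from Definition~\ref{defn3.2}(3)(b) applied to $(g^{-1},g\cdot e)$ together with Lemma~\ref{lem3.3} is a welcome refinement: the paper's verification of relation (5) silently replaces $(g\cdot s(e))\cap(\varphi(g,e)\cdot s(e))$ by $g\cdot s(e)$, and your argument is precisely what justifies that step.
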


\begin{proof}
By Theorem \ref{thm7.2}, $\pi:\mathcal{S}_{G,{\mathcal{U} }}\longrightarrow\mathcal{O}_{G,{\mathcal{U} }}$ is a tight representation, so we prove the universality of $\pi$. Let $\psi :\mathcal{S}_{G,{\mathcal{U} }}\longrightarrow \mathcal{A}$ be a tight representation. For every $e \in \mathcal{U}^1$, $A \in {\mathcal{U} }^0$ and $g \in G$, define
$$S_e := \psi(e,s(e),{{1}_G},\om),\;\; U_{A,g} :=\psi(\om,A,g,\om) \text{ and } P_A:=U_{A,{{{1}_G}}}.$$
We will show that $\{S_e, U_{A,g}\}$ is a $(G,\mathcal{U})$-family in $\mathcal{A}$ by verifying the properties in Definition \ref{defn3.4}. First, Lemma \ref{lem7.3}  says $\{S_e, P_A: A\in \mathcal{U}^0, e\in \mathcal{U}^1\}$ is a Cuntz-Krieger $\mathcal{U}$-family. For relation (3) of Definition \ref{defn3.4}, one can write
$$(U_{A,g})^*=\psi(\om,A,g,\om)^*=\psi(\om,g^{-1}\cdot A,\;g^{-1},\om)=U_{{g^{-1}\cdot A},{g^{-1}}}.$$
Also, relation (4) follows from
\begin{align}
(U_{A,g})(U_{B,h})&=\psi(\om,A,g, \om)\psi(\om,B,h,\om)\nonumber\\
&=\psi\big((\om,A,g,\om)(\om,B,h,\om)\big)\nonumber\\
&=\psi(\om, A\cap (g\cdot B), gh,\om) \nonumber\\
&=U{_{{A\cap (g\cdot B)},{gh}}}.\nonumber
\end{align}

In order to prove (5), we fix $A\in {\mathcal{U} }^0$, $g \in G$ and $e \in {\mathcal{U} }^1$. Then
\begin{align*}
(U_{A,g})S_e & =\psi\big(\om,A,g, \om)\psi(e,s(e),{{1}_G}, \om\big)\\
&=\psi \big( (\om,A,g,\om)(e,s(e),{{1}_G}, \om)\big)\\
&=\left\{
    \begin{array}{ll}
      \psi (g\cdot e, g\cdot s(e), \varphi(g,e), \om) & \text{if } g\cdot r(e)\subseteq A \\
      0 & \text{otherwise}
    \end{array}
  \right.\\
&=\left\{
    \begin{array}{ll}
      S_{g\cdot e}U_{{g\cdot s(e)},{\varphi(g,e)}} & \text{if } g\cdot r(e)\subseteq A \\
      0 & \text{otherwise}.
    \end{array}
  \right.
\end{align*}
Therefore, $\{S_e,U_{A,g}\}$ is a $(G,\mathcal{U})$-family in $\mathcal{A}$ and the universality of $\mathcal{O}_{G,{\mathcal{U}} }$ gives a $*$-homomorphism
$\phi :  \mathcal{O}_{G,{\mathcal{U} }}\longrightarrow \mathcal{A}$ such that $\phi( s_{e})=S_{e}$ and $\phi({u}_{A,g})={U}_{A,g}$, satisfying $\psi=\phi \circ \pi $. Note that uniqueness of such homomorphism $\phi$ follows from that of universal $C^*$-algebra $\mathcal{O}_{G,{\mathcal{U} }}$.
\end{proof}

As said before, for each $s=(\alpha, A,g,\beta)$ in $\mathcal{S}_{G,{\mathcal{U} }}$ we have $s^*s=(\beta, g^{-1} \cdot A, 1_G, \beta)$ and $D^{s^*s}=\mathrm{Z}(\beta,g^{-1} \cdot A)$ (see Section \ref{sec80}). Moreover, Proposition \ref{prop8004} says that the compact open bisections
$$\Theta(\alpha, A,g,\beta)=[(\alpha, A,g,\beta),  \mathrm{Z}(\beta,g^{-1} \cdot A) ]$$
generate the topology on ${\mathcal{G} }_{\mathrm{tight}}(\mathcal{S}_{G,{\mathcal{U} }})$. Therefore, Theorem \ref{thm7.4} combining with \cite[Theorem 13.3]{exe21} implies the following.

\begin{cor}\label{prop7.5}
Let $(G,\mathcal{U}, \varphi)$ be a self-similar ultragraph. Then, there exists a $*$-isomorphism
$$\phi :  \mathcal{O}_{G,{\mathcal{U} }}\longrightarrow  C^{*}({\mathcal{G} }_{\mathrm{tight}}(\mathcal{S}_{G,{\mathcal{U} }}))$$
 such that
$$\phi(s_\alpha)={\mathbf{1}}_{\Theta(\alpha,s(\alpha),{{1}_G},\om) }
 \text{ and }
\phi(u_{A,g})={\mathbf{1}}_{\Theta(\om,A,g,\om)}$$
for all $\alpha \in{\mathcal{U}}^{\geq 1}$, $A \in {\mathcal{U} }^0$ and $g \in G$.
\end{cor}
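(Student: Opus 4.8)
The plan is to obtain $\phi$ as the composite of two isomorphisms that are already in hand: the identification $\mathcal{O}_{G,\mathcal{U}} \cong C^*_{\mathrm{tight}}(\mathcal{S}_{G,\mathcal{U}})$ supplied by Theorem \ref{thm7.4}, followed by Exel's general identification $C^*_{\mathrm{tight}}(\mathcal{S}) \cong C^*(\mathcal{G}_{\mathrm{tight}}(\mathcal{S}))$ of \cite[Theorem 13.3]{exe21}. Since Proposition \ref{prop8004} guarantees that $\mathcal{G}_{\mathrm{tight}}(\mathcal{S}_{G,\mathcal{U}})$ is ample, the characteristic functions of its compact open bisections lie in $C_c(\mathcal{G}_{\mathrm{tight}}(\mathcal{S}_{G,\mathcal{U}}))$ and hence define elements of $C^*(\mathcal{G}_{\mathrm{tight}}(\mathcal{S}_{G,\mathcal{U}}))$, so the right-hand sides of the asserted formulas are meaningful.

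First I would record, from \cite[Theorem 13.3]{exe21}, that the map $\rho : \mathcal{S}_{G,\mathcal{U}} \to C^*(\mathcal{G}_{\mathrm{tight}}(\mathcal{S}_{G,\mathcal{U}}))$ given by $s \mapsto \mathbf{1}_{[s,\,D^{s^*s}]}$ is itself a universal tight representation. Using the identity $D^{s^*s} = \mathrm{Z}(\beta,g^{-1}\cdot A)$ recalled just before Proposition \ref{prop8004}, this bisection is precisely $\Theta(\alpha,A,g,\beta)$ when $s=(\alpha,A,g,\beta)$; in particular $\rho(\alpha,s(\alpha),1_G,\om)=\mathbf{1}_{\Theta(\alpha,s(\alpha),1_G,\om)}$ and $\rho(\om,A,g,\om)=\mathbf{1}_{\Theta(\om,A,g,\om)}$. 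The only care point here is matching Exel's abstract bisection $[s,D^{s^*s}]$ with the concrete $\Theta(\alpha,A,g,\beta)$ of Proposition \ref{prop8004}, but this is exactly the content of that identity.

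Next, since Theorem \ref{thm7.4} shows that $\pi : \mathcal{S}_{G,\mathcal{U}} \to \mathcal{O}_{G,\mathcal{U}}$ is also a universal tight representation, the universal property yields a unique $*$-isomorphism $\phi : \mathcal{O}_{G,\mathcal{U}} \to C^*(\mathcal{G}_{\mathrm{tight}}(\mathcal{S}_{G,\mathcal{U}}))$ satisfying $\phi \circ \pi = \rho$ (invertibility of $\phi$ following from the symmetric universal property of $\rho$ together with uniqueness). The two generator formulas are then read off by evaluating $\phi$ on the images $s_\alpha = \pi(\alpha,s(\alpha),1_G,\om)$ and $u_{A,g} = \pi(\om,A,g,\om)$, where I use the convention $s_\om = \mathrm{id}$ and the Cuntz--Krieger relation $s_\alpha p_{s(\alpha)} = s_\alpha$ to see that $\pi(\alpha,s(\alpha),1_G,\om)=s_\alpha$ and $\pi(\om,A,g,\om)=u_{A,g}$. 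This gives
$$\phi(s_\alpha) = \rho(\alpha,s(\alpha),1_G,\om) = \mathbf{1}_{\Theta(\alpha,s(\alpha),1_G,\om)} \quad\text{and}\quad \phi(u_{A,g}) = \rho(\om,A,g,\om) = \mathbf{1}_{\Theta(\om,A,g,\om)},$$
as required.

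I do not expect a serious obstacle: the statement is essentially a bookkeeping consequence of two isomorphisms whose substantive work (tightness and universality of $\pi$ in Theorems \ref{thm7.2} and \ref{thm7.4}, amplitude of the groupoid in Proposition \ref{prop8004}) has already been completed. The bulk of the writing is simply tracking the specified generators through the composite $\phi = \rho \circ \pi^{-1}$.
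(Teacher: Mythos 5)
Your argument is correct and follows exactly the route the paper intends: the paper derives the corollary by combining Theorem \ref{thm7.4} (universality of the tight representation $\pi$) with \cite[Theorem 13.3]{exe21} and the identification of the bisections $[s,D^{s^*s}]=\Theta(\alpha,A,g,\beta)$ from Proposition \ref{prop8004}, which is precisely your composite $\phi=\rho\circ\pi^{-1}$. Your tracking of the generators via $\pi(\alpha,s(\alpha),1_G,\om)=s_\alpha p_{s(\alpha)}=s_\alpha$ and $\pi(\om,A,g,\om)=u_{A,g}$ supplies the bookkeeping the paper leaves implicit.
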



\section{Hausdorffness of ${\mathcal{G} }_{\mathrm{tight}}(\mathcal{S}_{G,{\mathcal{U} }})$}

In this section, we investigate the Hausdorff property of the groupoid ${\mathcal{G} }_{\mathrm{tight}}(\mathcal{S}_{G,{\mathcal{U} }})$ via properties of the underlying self-similar ultragraph $(G,\mathcal{U},\varphi)$. Following \cite[Section 12]{bro14}, our key tool here is:
\begin{prop}[\cite{exe16}, Theorem 3.16]\label{prop8.1}
 If $\mathcal{S}$ is an inverse semigroup with zero, then the associated tight groupoid ${\mathcal{G} }_{\mathrm{tight}}(\mathcal{S})$ is Hausdorff if and only if every $s\in\mathcal{S}$ has a finite cover in $\mathcal{E}(\mathcal{S})$.
\end{prop}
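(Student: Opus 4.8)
The plan is to pass to the standard topological reformulation of Hausdorffness for étale groupoids and then translate it into the finite-cover condition by means of the description of tight filters. Since $\mathcal{G}_{\mathrm{tight}}(\mathcal{S})$ is étale with Hausdorff unit space, it is Hausdorff if and only if its unit space $\mathcal{G}_{\mathrm{tight}}(\mathcal{S})^{(0)}$ is closed: the forward implication is automatic (a unit space is the equalizer of two continuous maps into a Hausdorff space), and for the converse one separates two germs with equal source and range by pulling the open set $\mathcal{G}\setminus\mathcal{G}^{(0)}$ back through the continuous multiplication. Because the compact open bisections $\Theta_s:=[s,D^{s^*s}]$ generate the topology, closedness of $\mathcal{G}^{(0)}$ is equivalent to closedness, inside each $D^{s^*s}$, of the set $U_s:=\{\mathcal{F}\in D^{s^*s}:[s,\mathcal{F}]\in \mathcal{G}^{(0)}\}$, which is the image of $\Theta_s\cap\mathcal{G}^{(0)}$ under the source homeomorphism $\Theta_s\cong D^{s^*s}$.

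The first genuinely algebraic step is to identify $U_s$. I would show that a germ $[s,\mathcal{F}]$ is a unit precisely when $\mathcal{F}$ contains some idempotent $e$ with $e\le s$, i.e. $e\in\mathcal{J}_s$: if $se=e$ with $e\in\mathcal{F}$ then $[s,\mathcal{F}]=[e,\mathcal{F}]$, and conversely a unit representative yields such an $e$. Since every $e\in\mathcal{J}_s$ satisfies $e\le s^*s$, this gives the clean formula
$$U_s=\bigcup_{e\in\mathcal{J}_s}D^e,$$
an \emph{open} subset of $D^{s^*s}$. The whole statement thus reduces to showing that $U_s$ is closed for every $s$ if and only if every $s$ admits a finite cover contained in $\mathcal{J}_s$.

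For the direction starting from a finite cover $\{e_1,\dots,e_n\}\subseteq\mathcal{J}_s$, I would prove the sharper identity $U_s=\bigcup_{i=1}^n D^{e_i}$. The inclusion $\supseteq$ is immediate. For $\subseteq$, take $\mathcal{F}\in D^e$ with $e\in\mathcal{J}_s$; the set $\{ee_1,\dots,ee_n\}$ is a finite cover of the idempotent $e$, because any nonzero idempotent $f\le e$ again lies in $\mathcal{J}_s$ and hence meets some $e_i$, giving $0\ne fe_i=f(ee_i)$. The cover-to-join property characterizing tight filters then forces $ee_i\in\mathcal{F}$ for some $i$, whence $e_i\in\mathcal{F}$ and $\mathcal{F}\in D^{e_i}$. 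Thus $U_s$ is a finite union of the compact open sets $D^{e_i}$, hence compact and therefore closed in the Hausdorff space $D^{s^*s}$. Conversely, if $\mathcal{G}_{\mathrm{tight}}(\mathcal{S})$ is Hausdorff then $U_s$ is a closed subset of the compact set $D^{s^*s}$, hence compact, so its open cover $\{D^e\}_{e\in\mathcal{J}_s}$ admits a finite subcover $D^{e_1},\dots,D^{e_n}$; this finite subfamily lies in $\mathcal{J}_s$ and is an outer cover, since for nonzero $e\in\mathcal{J}_s$ one picks (by Zorn, using that ultrafilters are tight) a tight filter $\mathcal{F}\in D^e\subseteq\bigcup_i D^{e_i}$, which yields $e\Cap e_i$ for some $i$.

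The step I expect to be the crux is the passage between the combinatorial notion of cover and the topological identity $U_s=\bigcup_i D^{e_i}$: this is exactly where the cover-to-join characterization of tight filters (rather than mere ultrafilters) is indispensable, and it is the only place where tightness is used in an essential way. By contrast, the reduction of Hausdorffness to closedness of the unit space and its localization to the generating bisections $\Theta_s$ is routine étale-groupoid bookkeeping once $U_s$ has been correctly identified.
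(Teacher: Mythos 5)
The paper states this proposition as an imported result, citing \cite{exe16}, Theorem 3.16, and gives no proof of its own, so there is no in-paper argument to compare against. Your proof is correct and reconstructs essentially the original Exel--Pardo argument: reduce Hausdorffness of an \'etale groupoid with Hausdorff unit space to closedness of $\mathcal{G}^{(0)}$, identify $\Theta_s\cap\mathcal{G}^{(0)}$ with $\bigcup_{e\in\mathcal{J}_s}D^e$ inside the compact set $D^{s^*s}$, and convert between finite covers of $\mathcal{J}_s$ and finite subcovers of this open set using the cover-to-join characterization of tight filters in one direction and compactness plus the existence of (tight) ultrafilters in the other.
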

Although one may try to extend the results of \cite[Section 12]{bro14} to the ultragraph setting, but we have two important differences which should be considered: first the graphs in \cite{exe16} are finite, while our ultragraphs here could contain infinitely many vertices and edges. Second, ultragraphs have set of vertices instead of single vertices in the directed graphs. These differences force us to change and extend the condition in \cite[Theorem 12.2]{bro14}.

\begin{defn}\label{defn8.2}
Let $(G,\mathcal{U},\varphi)$ be a self-similar ultragraph. A path $\alpha\in {\mathcal{U}}^{\geq1}$ is called \emph{strongly fixed by $g\in G$} if $g\cdot \alpha=\alpha$ and $\varphi(g,\alpha)=1 _G$.
\end{defn}

The next lemma shows that any extension of a strongly fixed by $g$ is again strongly fixed.

\begin{lem}\label{lem8.3}
Let $g\in G $. If $\alpha\in {\mathcal{U}}^{\geq1}$ is strongly fixed by g, then each its extension of the form $\gamma=\alpha\beta$, where $\beta\in {\mathcal{U}}^{\geq1}$,  is also strongly fixed by g.
\end{lem}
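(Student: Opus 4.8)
The plan is to verify the two defining conditions of Definition \ref{defn8.2} directly for the extension $\gamma=\alpha\beta$, using the concatenation rules for the extended action and cocycle introduced just after Definition \ref{defn3.2}. Recall that for any factorization $\gamma=\alpha\beta$ into sub-paths one has $g\cdot(\alpha\beta)=(g\cdot\alpha)(\varphi(g,\alpha)\cdot\beta)$ and $\varphi(g,\alpha\beta)=\varphi(\varphi(g,\alpha),\beta)$; the validity of these formulas for arbitrary sub-paths (not merely when $\alpha$ is a single edge) follows by the same inductive argument as in \cite[Proposition 2.5]{bro14}, and I would invoke it at the outset.

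First I would handle the action condition $g\cdot\gamma=\gamma$. Substituting the hypotheses $g\cdot\alpha=\alpha$ and $\varphi(g,\alpha)=1_G$ into the first concatenation formula gives $g\cdot\gamma=(g\cdot\alpha)(\varphi(g,\alpha)\cdot\beta)=\alpha(1_G\cdot\beta)=\alpha\beta=\gamma$, where the final step uses that $1_G$ acts as the identity automorphism on $\mathcal{U}^{\geq 1}$.

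Next I would handle the cocycle condition $\varphi(g,\gamma)=1_G$. Using the second concatenation formula together with $\varphi(g,\alpha)=1_G$ yields $\varphi(g,\gamma)=\varphi(\varphi(g,\alpha),\beta)=\varphi(1_G,\beta)$, so it remains only to show $\varphi(1_G,\beta)=1_G$. This follows from the cocycle axiom Definition \ref{defn3.2}(3)(a): setting $g=h=1_G$ there gives $\varphi(1_G,e)=\varphi(1_G,e)\varphi(1_G,e)$ in $G$, whence $\varphi(1_G,e)=1_G$ for every edge $e$; an induction on $|\beta|$, writing $\beta=e\beta'$ and applying $\varphi(1_G,\beta)=\varphi(\varphi(1_G,e),\beta')=\varphi(1_G,\beta')$, then extends this to all paths.

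The computations are entirely routine, and there is no genuine obstacle here. The one point I would take care to justify is that the two concatenation identities hold for arbitrary sub-paths $\alpha,\beta$ rather than only for single-edge factorizations, since the statement is exactly an assertion about such a general splitting; once that is in hand, both conditions of Definition \ref{defn8.2} reduce to the substitutions above.
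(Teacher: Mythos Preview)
Your argument is correct and follows exactly the same route as the paper's proof: both verify $g\cdot\gamma=\gamma$ and $\varphi(g,\gamma)=1_G$ by substituting the hypotheses into the concatenation formulas for the extended action and cocycle. You are in fact slightly more careful than the paper in explicitly justifying $\varphi(1_G,\beta)=1_G$ and in flagging that the concatenation identities must hold for arbitrary sub-path factorizations.
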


\begin{proof}
If $g\cdot \alpha=\alpha$ and $\varphi(g,\alpha)=1 _G$, we then have by definition
$$g\cdot\gamma=g\cdot (\alpha\beta) =(g\cdot \alpha)(\varphi(g,\alpha)\cdot \beta)=\alpha\beta=\gamma.$$
Moreover,
$$\varphi(g,\gamma)=\varphi(g,\alpha\beta)=\varphi(\varphi(g,\alpha),\beta)=\varphi(1_G,\beta)=1_G,$$
and hence $\gamma$ is strongly fixed by $g$.
\end{proof}

The preceding lemma says that if $\alpha \in {\mathcal{U} }^{\geq 1}$ is strongly fixed by $g$, then there exists a minimal strongly fixed subpath $\alpha_0$ of $\alpha$ such that $\alpha=\alpha_0{\alpha}'$. Given any $g \in G \backslash \{1_G\}$, minimal strongly fixed paths in ${\mathcal{U} }^{\geq 1}$ by $g$ have a key role for describing the Hausdorffness of ${\mathcal{G} }_{\mathrm{tight}}(\mathcal{S}_{G,{\mathcal{U} }})$.

\begin{lem}\label{lem8.4}
Let $(G,\mathcal{U},\varphi)$ be a self-similar ultragraph. For every $s=(\alpha,A,g,\beta)\in {\mathcal{S}_{G,\mathcal{U} }}$ and $(\gamma,B,1_G,\gamma) \in \mathcal{E}({\mathcal{S}_{G,\mathcal{U} }})$, we have $(\gamma,B,1_G,\gamma) \leq(\alpha,A,g,\beta)$ if and only if $\alpha=\beta$ and, moreover, one of the following conditions holds:

\begin{enumerate}
 \item $\gamma=\alpha\delta$ for some strongly fixed path $\delta\in  {\mathcal{U}}^{\geq 1}$ by $g$ with $r(\delta)\subseteq A$, or
  \item $\gamma=\alpha$, $B\subseteq A$ and $g=1_G $\;$ ($so $s$ is an idempotent$)$.
\end{enumerate}
\end{lem}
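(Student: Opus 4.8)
The plan is to use the characterization, recalled just before Theorem \ref{thm7.2}, that for an idempotent $e$ one has $e\leq s$ if and only if $se=e$. This reduces the whole statement to computing the product $(\alpha,A,g,\beta)\,q_{(\gamma,B)}=(\alpha,A,g,\beta)(\gamma,B,1_G,\gamma)$ through the three nonzero cases of the multiplication in Definition \ref{defn4.1} and deciding exactly when this product equals $q_{(\gamma,B)}=(\gamma,B,1_G,\gamma)$.

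First I would dispose of the case $\beta=\gamma\varepsilon$ with $\varepsilon\in\mathcal{U}^{\geq 1}$: there the product equals $(\alpha,A\cap(g\cdot s(\beta)),g,\beta)=(\alpha,A,g,\beta)=s$ (using $A\subseteq g\cdot s(\beta)$, which holds since $s\in\mathcal{S}_{G,\mathcal{U}}$), whose fourth coordinate is $\beta=\gamma\varepsilon\neq\gamma$, so it can never equal $q_{(\gamma,B)}$. The ``otherwise'' alternative gives $0\neq q_{(\gamma,B)}$. Hence a nonzero product equal to $q_{(\gamma,B)}$ can only arise from the case $\gamma=\beta$ or the case $\gamma=\beta\varepsilon$.

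In the case $\gamma=\beta$ (with $A\cap(g\cdot B)\neq\emptyset$) the product is $(\alpha,A\cap(g\cdot B),g,\beta)$; equating coordinates with $(\beta,B,1_G,\beta)$ forces $\alpha=\beta$, $g=1_G$ and $A\cap B=B$, i.e. $B\subseteq A$ — this is alternative (2), and then $s=(\alpha,A,1_G,\alpha)$ is an idempotent. In the case $\gamma=\beta\varepsilon$ (with $g\cdot r(\varepsilon)\subseteq A$) the product is $(\alpha(g\cdot\varepsilon),(g\cdot s(\varepsilon))\cap(\varphi(g,\varepsilon)\cdot B),\varphi(g,\varepsilon),\gamma)$. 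Equating the third coordinate gives $\varphi(g,\varepsilon)=1_G$, and equating the first gives $\alpha(g\cdot\varepsilon)=\beta\varepsilon=\gamma$. Here lies the main point: since the $G$-action preserves path length, comparing lengths forces $|\alpha|=|\beta|$, and as both $\alpha$ and $\beta$ are the length-$|\alpha|$ prefix of $\gamma$ we get $\alpha=\beta$; cancelling then yields $g\cdot\varepsilon=\varepsilon$. Together with $\varphi(g,\varepsilon)=1_G$ this says precisely that $\delta:=\varepsilon$ is strongly fixed by $g$, while $g\cdot r(\varepsilon)=r(\varepsilon)\subseteq A$ gives the range condition — this is alternative (1). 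The degenerate subcases where $\alpha$ or $\beta$ equals $\om$ are absorbed by the same length bookkeeping, reading $|\om|=0$.

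For the converse I would substitute each alternative back into the multiplication: under (1), $\gamma=\alpha\delta$ with $\delta$ strongly fixed places us in the case $\gamma=\beta\varepsilon$ with $\varepsilon=\delta$, and using $g\cdot\delta=\delta$, $\varphi(g,\delta)=1_G$ and $B\subseteq s(\gamma)=s(\delta)$ the product collapses to $(\gamma,B,1_G,\gamma)$; under (2) we are in the case $\gamma=\beta$ with $g=1_G$ and $A\cap B=B\neq\emptyset$, again giving $q_{(\gamma,B)}$. I expect the main obstacle to be the bookkeeping in the forward direction of the case $\gamma=\beta\varepsilon$ — specifically, justifying cleanly across the $\om$/non-$\om$ possibilities that $\alpha(g\cdot\varepsilon)=\beta\varepsilon$ implies $\alpha=\beta$ and $g\cdot\varepsilon=\varepsilon$, and checking that the set-intersection coordinate $(g\cdot s(\varepsilon))\cap(\varphi(g,\varepsilon)\cdot B)$ really reduces to $B$ rather than to a proper subset.
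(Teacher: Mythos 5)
Your proposal is correct and follows essentially the same route as the paper: reduce $e\leq s$ to $se=e$, run through the three nonzero cases of the multiplication in Definition \ref{defn4.1}, and use that the $G$-action preserves path length to force $\alpha=\beta$ and $g\cdot\varepsilon=\varepsilon$ in the case $\gamma=\beta\varepsilon$. The two points you flag as potential obstacles resolve exactly as you suspect (the paper handles the $\beta=\gamma\varepsilon$ case by calling it ``analogous,'' and the intersection coordinate reduces to $B$ because $g\cdot s(\delta)=s(g\cdot\delta)=s(\delta)\supseteq B$), so there is no gap.
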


\begin{proof}
For the ``if " part, assume $\alpha=\beta$. If statement $(1)$ holds  and $\delta$ is strongly fixed by $g$, then
\begin{align*}
(\alpha,A,g,\beta)(\gamma,B,{{1}_G},\gamma) &=(\alpha(g \cdot \delta) ,\;(g \cdot s(\delta))\cap(\varphi(g,\delta)\cdot B),\;\varphi(g,\delta),\;\gamma) \\
&=(\alpha\delta, (g \cdot s(\delta))\cap B,\;{{1}_G},\;\gamma) \\
&=(\gamma, s(\gamma)\cap B,\;{{1}_G}, \gamma) \\
&=(\gamma,B,{{1}_G},\gamma),\nonumber
\end{align*}
and hence $(\gamma,B,{{1}_G},\gamma) \leq(\alpha,A,g,\beta)$. If $(2)$ holds, this inequality is trivial by Definition \ref{defn4.1}.

 For the ``only if " direction, assume $(\gamma,B,1_G,\gamma) \leq(\alpha,A,g,\beta)$, that is
 \begin{equation}\label{(8.1)}
(\alpha,A,g,\beta)(\gamma,B,1_G,\gamma)=(\gamma,B,1_G,\gamma).
\end{equation}
Since $(\alpha,A,g,\beta)(\gamma,B,1_G,\gamma)\neq 0$, one of the following cases may occur:

Case 1: $\gamma=\beta\delta$ with $|\delta|\geq 1$. Then
$$(\gamma,B,1_G,\gamma)=(\alpha,A,g,\beta)(\beta\delta,B,1_G,\gamma)=(\alpha(g\cdot \delta),(g\cdot s(\delta))\cap (\varphi(g,\delta)\cdot B), \varphi(g,\delta),\;\gamma),$$
so
$$\alpha(g\cdot \delta)=\gamma=\beta\delta \text{ and } \varphi(g,\delta)=1_G.$$
Since $|g\cdot \delta|=|\delta|$, it turns out $\alpha=\beta$, $g\cdot \delta=\delta$ and $\gamma=\alpha\delta.$

Case 2: $\beta=\gamma\delta$ with $|\delta|\geq 1$. This case is analogous to Case 1.

Case 3: $\beta=\gamma$. By Definition \ref{defn4.1}, in this case we have
$$(\alpha,A,g,\beta)(\gamma,B,1_G,\gamma)=(\alpha, A\cap (g \cdot B),g,\gamma)$$
which is equal to $(\gamma,B,1_G,\gamma)$ by (\ref{(8.1)}) above. So $\alpha=\gamma=\beta $, $g=1_G$ and $B\subseteq A$ as desired. This completes the proof.
\end{proof}

Note that for given any $s=(\alpha,A,g,\beta)\in {\mathcal{S}_{G,\mathcal{U} }}$ with $g\neq 1_G$, if $\gamma$ is a minimal strongly fixed by $g$ and $r(\gamma)\subseteq A$, then Lemma \ref{lem8.4} and Corollary \ref{cor4.4} imply that $q_{(\alpha\gamma,s(\gamma))}$ is a maximal idempotent in $\mathcal{J}_s=\{e\in \mathcal{E}(\mathcal{S}_{G,\mathcal{U}}) : e \leq s \}$.

We now prove the  main result of this section. For convenience, we denote by ${\mathbf{SF}}_g$ the set of strongly fixed paths by $g\in G$. Also, for each $A \in {\mathcal{U} }^0$ we use the notation
$$A{\mathbf{SF}}_g:=\{\gamma \in {\mathbf{SF}}_g : r(\gamma)\subseteq A\}.$$

\begin{thm}\label{thm8.5}
Let $(G,\mathcal{U},\varphi)$ be a self-similar ultragraph as in Definition $\ref{defn3.2}$. If for any $g\in G\backslash \{1_G\}$, each set of the form $\{v\}{\mathbf{SF}}_g$ or $s(e){\mathbf{SF}}_g$, where $v\in U^0$ and $e\in{\mathcal{U} }^1$, has finitely many minimal elements, then the tight groupoid ${\mathcal{G} }_{\mathrm{tight}}(\mathcal{S}_{G,{\mathcal{U} }})$ is Hausdorff.
\end{thm}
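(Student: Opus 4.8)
The plan is to invoke Exel's criterion, Proposition \ref{prop8.1}: it suffices to show that every $s=(\alpha,A,g,\beta)\in\mathcal{S}_{G,\mathcal{U}}$ admits a finite cover in $\mathcal{E}(\mathcal{S}_{G,\mathcal{U}})$. The set $\mathcal{J}_s=\{e\in\mathcal{E}(\mathcal{S}_{G,\mathcal{U}}):e\le s\}$ is completely described by Lemma \ref{lem8.4}, so I would first use it to split into three cases. If $\alpha\neq\beta$, then Lemma \ref{lem8.4} forces $\mathcal{J}_s=\emptyset$, and the empty set is vacuously a finite cover. If $\alpha=\beta$ and $g=1_G$, then $s$ is itself the idempotent $q_{(\alpha,A)}$ and $\{s\}$ is a finite cover. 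Hence the whole content lies in the remaining case $\alpha=\beta$ and $g\neq 1_G$.

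In that case Lemma \ref{lem8.4} shows that every nonzero idempotent below $s$ has the form $q_{(\alpha\delta',B)}$ with $\delta'$ strongly fixed by $g$ and $r(\delta')\subseteq A$, i.e.\ $\delta'\in A\mathbf{SF}_g$. My candidate cover is
$$\mathcal{C}:=\{q_{(\alpha\delta,s(\delta))}:\delta \text{ is a minimal element of } A\mathbf{SF}_g\}.$$
Each such $q_{(\alpha\delta,s(\delta))}$ lies in $\mathcal{J}_s$ by Lemma \ref{lem8.4}(1) (note $r(\delta)\subseteq A\subseteq s(\alpha)$ makes $\alpha\delta$ a legitimate path), so $\mathcal{C}\subseteq\mathcal{J}_s$. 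To see that $\mathcal{C}$ is an outer cover, I would take any nonzero $e=q_{(\alpha\delta',B)}\le s$; since paths have finite length, the prefixes of $\delta'$ lying in $A\mathbf{SF}_g$ form a nonempty finite set, so $\delta'$ has a prefix $\delta$ that is minimal in $A\mathbf{SF}_g$. Writing $\delta'=\delta\delta''$, an application of Corollary \ref{cor4.4} (case (b) if $\delta''\neq\om$, case (a) if $\delta'=\delta$) gives $q_{(\alpha\delta',B)}\le q_{(\alpha\delta,s(\delta))}$, whence $e\Cap q_{(\alpha\delta,s(\delta))}$. Thus $\mathcal{C}$ is a cover, and it remains only to prove that $\mathcal{C}$ is \emph{finite}, i.e.\ that $A\mathbf{SF}_g$ has finitely many minimal elements.

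The finiteness is the heart of the matter and the point at which the ultragraph hypothesis enters. The key observation is that every prefix of a path has the same range, so two paths are prefix-comparable only if their ranges coincide; since $r(\delta)$ is always a singleton, this yields the disjoint decomposition
$$\min(A\mathbf{SF}_g)=\bigsqcup_{v\in A}\min(\{v\}\mathbf{SF}_g),$$
where $\min(\,\cdot\,)$ denotes the set of minimal elements under the prefix order. From this I would read off, for $A,B\in\mathcal{U}^0$, that $\min((A\cup B)\mathbf{SF}_g)=\min(A\mathbf{SF}_g)\cup\min(B\mathbf{SF}_g)$, while $\min((A\cap B)\mathbf{SF}_g)$ and $\min((A\setminus B)\mathbf{SF}_g)$ are both contained in $\min(A\mathbf{SF}_g)$. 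Hence the property ``$A\mathbf{SF}_g$ has finitely many minimal elements'' is preserved under the operations $\cup,\cap,\setminus$. Since $\mathcal{U}^0$ is generated from $\emptyset$, the singletons $\{v\}$, and the sets $s(e)$ under precisely these operations, and since the hypothesis guarantees finiteness for the generators $\{v\}\mathbf{SF}_g$ and $s(e)\mathbf{SF}_g$, an induction on the construction of $A$ shows that $A\mathbf{SF}_g$ has finitely many minimal elements for every $A\in\mathcal{U}^0$. This makes $\mathcal{C}$ finite and completes the proof.

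I expect the case reduction and the verification that $\mathcal{C}$ is a cover to be routine consequences of Lemma \ref{lem8.4} and Corollary \ref{cor4.4}. The genuinely new step, forced by the fact that sources are sets rather than single vertices so that $A$ may be infinite, is propagating the finiteness of minimal strongly fixed paths from the generating sets $\{v\}$ and $s(e)$ to an arbitrary $A\in\mathcal{U}^0$; this is exactly why the hypothesis is imposed only on these generators, and it is the main obstacle I would anticipate.
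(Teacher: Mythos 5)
Your proposal is correct and follows the same skeleton as the paper's proof: reduce via Proposition \ref{prop8.1} to producing finite covers, dispose of the cases $\alpha\neq\beta$ and $g=1_G$ by Lemma \ref{lem8.4}, and take as cover the idempotents $q_{(\alpha\tau,s(\tau))}$ indexed by the minimal elements of $A\mathbf{SF}_g$, checking the covering property via Lemma \ref{lem8.3} and Corollary \ref{cor4.4}. The one place you diverge is the finiteness of $\min(A\mathbf{SF}_g)$: the paper quotes Tomforde's Lemma 2.12 to write $A\subseteq X\cup\bigl(\bigcup_{e\in Y}s(e)\bigr)$ with $X$ and $Y$ finite and then transfers the hypothesis from this finite union of generators to $A$, whereas you run a structural induction over the generation of $\mathcal{U}^0$ from $\varnothing$, singletons and the sets $s(e)$ under $\cup,\cap,\setminus$. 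Both mechanisms ultimately rest on the same observation you make explicit --- that since ranges of paths are singletons, $\min(A\mathbf{SF}_g)$ decomposes disjointly as $\bigsqcup_{v\in A}\min(\{v\}\mathbf{SF}_g)$, so minimality is a per-vertex notion and passes to subsets and finite unions --- and your version has the small advantage of being self-contained, at the cost of an induction the paper avoids by citation. No gaps.
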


\begin{proof}
For every $g\in {G\backslash \{1_G\}}$, assume that every set of the form $\{v\}{\mathbf{SF}}_g$ or $s(e){\mathbf{SF}}_g$ contains finitely many minimal strongly fixed paths by $g$. By Proposition \ref{prop8.1}, it suffices to show that every $s \in \mathcal{S}_{G,{\mathcal{U} }}$ has a finite cover.

So, fix an arbitrary element $s=(\alpha,A,g,\beta)$ in $ \mathcal{S}_{G,{\mathcal{U} }}$. If $\alpha\neq\beta$, then $s$ dominates no idempotents by Lemma \ref{lem8.4}, and there is nothing to do. Moreover, whenever $g=1_G$ and $s=(\alpha,A,1_G,\alpha)$, $s$ itself is an idempotent, hence $\{s\}$ is a cover for $s$. Thus the nontrivial cases occur only for non-idempotent elements $s=(\alpha,A,g,\alpha)$ with $g\neq 1_G$. Arguments for the cases $\alpha =\om$ and $\alpha \in {\mathcal{U} }^{\geq 1}$ are similar, but have some partial differences. We will prove the statement for the case $\alpha \in {\mathcal{U} }^{\geq 1}$ and the proof in the other case is left to reader.

Thus assume $\alpha \in {\mathcal{U} }^{\geq 1}$. Lemma \ref{lem8.4}(1) says that the principal ideal $\mathcal{J}_s$ is equal to
$${\mathcal{J}_s}=\{q_{(\alpha\gamma,B)} : \gamma \in  A{\mathbf{SF}}_g  \; \text{and}\; B\subseteq s( \gamma)\}$$
where $q_{(\alpha\gamma,B)}=(\alpha\gamma,B,1_G,\alpha\gamma)$. We prove that
\begin{equation}\label{(8.2)}
\mathcal{C}=\{q_{(\alpha\tau,s(\tau))} : \tau \text{ is a minimal path in } A {\mathbf{SF}}_g \}
\end{equation}
is a finite cover for $s$. As noted before the theorem, $\mathcal{C}$ is in fact the set of all maximal idempotents dominated by $s$. Fix some $q_{(\alpha\gamma,B)}\leq s$ such that $\gamma \in A {\mathbf{SF}}_g$. By Lemma \ref{lem8.3}, there exists a minimal strongly fixed path $\tau \in {\mathbf{SF}}_g$ such that $\gamma = \tau {\gamma}'$. In particular, we have $q_{(\alpha\gamma,B)}\leq q_{(\alpha\tau,s(\tau))}$ by Corollary \ref{cor4.4}. Since $q_{(\alpha\tau,s(\tau))}\in \mathcal{C}$, we deduce that $\mathcal{C}$ is a cover for $s$.

It remains to show that $\mathcal{C}$ is a finite set. Since $A\in {\mathcal{U}}^0$, one may apply \cite[Lemma 2.12]{ana00} to find a finite set $X\subseteq U^0$ of vertices and a finite set of edges $Y\subseteq {\mathcal{U}}^1$ such that $A\subseteq X \cup (\bigcup_{e\in Y}s(e)).$
By hypothesis, $X \cup (\bigcup_{e\in Y}s(e))$ receives finitely many minimal strongly fixed paths by $g$, so dose $A$. Consequently, $\mathcal{C}$ is a finite set by (\ref{(8.2)}) and the proof is completed.
\end{proof}

 Note that the converse of Theorem \ref{thm8.5} is of course true. Indeed, for an element $s=(\om,s(e),g,\om)$ in $\mathcal{S}_{G,{\mathcal{U} }}$, Lemma \ref{lem8.4} yields
$${\mathcal{J}}_s=\{q_{(\alpha,A)} : \alpha\in s(e){\mathbf{SF}}_g,\; A\subseteq s(\alpha) \}.$$
Thus, $s$ has a finite cover if and only if $ s(e){\mathbf{SF}}_g$ has a finitely many minimal elements. A similar word can be also said for elements of the form $s=(\{v\},\{v\},g,g^{-1}\cdot \{v\})$, and therefore the desired statement is obtained by Proposition \ref{prop8.1}.


\section{Pseudo freeness and $E^*$-unitarity}

The notion of pseudo freeness is introduced in \cite{bro14} for self-similar graphs being compatible with $E^*$-unitary property of associated inverse semigroup. In this section, we investigate this property for inverse semigroups ${\mathcal{S}_{G,\mathcal{U} }}$ and extend \cite[Proposition 5.8]{bro14} and \cite[Theorem 3.3]{bur81} to the self-similar ultragraph setting.

\begin{defn}\label{defn5.1}
Let $(G,\mathcal{U},\varphi)$ be a self-similar ultragraph. We say a self-similar ultragraph $(G,\mathcal{U},\varphi)$ is {\it{pseudo free}} if for every $g \in G \backslash \{1_G\}$, there are no paths strongly fixed by $g$, that is
$$g\cdot \alpha\quad \text{and} \quad \varphi(g,\alpha)=1_G \quad  \Longrightarrow \quad g=1_G.$$
\end{defn}
Recall that an inverse semigroup $\mathcal{S}$ is said to be \emph{$E^*$-unitary} if any element $s$ in $\mathcal{S}$ which dominates a nonzero  idempotent is an idempotent itself; that is, if $s \in \mathcal{S}$ and $e \in \mathcal{E}({\mathcal{S}})$ satisfy $e\leq s$, then $s \in  \mathcal{E}({\mathcal{S}})$. So Lemma \ref {lem8.4} is a key tool here.

\begin{defn}[\cite{far05}]
Let $\mathcal{S}$ be an  inverse semigroup with zero. A \textit{prehomomorphism} of  $\mathcal{S}$ is a function
$$\theta : {\mathcal{S}} \backslash \{0\}\rightarrow H,$$
where $H$  is a group such that $\theta(st)=\theta(s)\theta(t)$ whenever $ st\neq 0$. A prehomomorphism $\theta$ is called {\it{idempotent pure}} if ${\theta}^{-1}(1_H)=\mathcal{E}(\mathcal{S})$. We know that every inverse semigroup $\mathcal{S}$ admits a prehomomorphism $\sigma :{\mathcal{S}} \backslash \{0\}\rightarrow U(\mathcal{S}) $ to the universal group $U(\mathcal{S})$ of $\mathcal{S}$. Then we say that $\mathcal{S}$ is \textit{strongly $E^{\ast}$-unitary} if $\sigma :{\mathcal{S}} \backslash \{0\}\rightarrow U(\mathcal{S}) $ is idempotent pure.
\end{defn}

Note that a strongly $E^*$-unitary inverse semigroup is certainly $E^*$-unitary, and one may combine \cite[Theorem 3.3]{bur81} and \cite[Proposition 5.8]{bro14} to imply that they are equivalent for the inverse semigroup of self-similar graphs.

Following \cite{bur81} to study strongly $E^*$-unitary of $\mathcal{S}_{G,\mathcal{U}}$, we construct a specific self-similar group $(G,{\tilde{\mathcal{U}}},{\tilde{\varphi}})$ from $(G,\mathcal{U},\varphi)$. To do this end, we collapse all vertices of $\mathcal{U}$ to a single vertex $v_0$, and construct the directed graph
$$\tilde{\mathcal{U}}=(\{{v_0}\},{\mathcal{U}}^1,\tilde{r},\tilde{s})$$
with one vertex such that $\tilde{r}(e)=\tilde{s}(e)= v_0$ for all $e\in {\mathcal{U}}^1$. Hence ${\tilde{\mathcal{U}}}^*$ is the monoid on ${\mathcal{U}}^1$ with the identity $v_0$. Moreover, the action $G \curvearrowright{\mathcal{U}}$ and 1-cocycle $\varphi$ induce naturally an action $G \curvearrowright\tilde{\mathcal{U}}$ and 1-cocycle ${\tilde{\varphi}}$  on ${G\times\tilde{\mathcal{U}}}^*$, making $(G,{\tilde{\mathcal{U}}})$ as a self-similar group in the sense of \cite{exe18}.

\begin{lem}[{\cite[Lemmas 3.1 and 3.2]{far05}}]\label{lem7..3}
Let $(G,\tilde{\mathcal{U}},\tilde{\varphi})$ be a self-similar group in the sense of \cite{exe18}. Then the associated  semigroup ${\mathcal{S}_{G,\tilde{\mathcal{U} }}}$ is strongly $E^*$-unitary if and only if $(G,\tilde{\mathcal{U}},\tilde{\varphi})$ is pseudo free.
\end{lem}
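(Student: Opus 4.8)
The plan is to exploit the fact that $\tilde{\mathcal{U}}$ has a single vertex $v_0$, so that $\mathcal{U}^0=\{\emptyset,\{v_0\}\}$ and every nonzero element of $\mathcal{S}_{G,\tilde{\mathcal{U}}}$ automatically has set-component $A=\{v_0\}$. Consequently $\mathcal{S}_{G,\tilde{\mathcal{U}}}$ is just the Exel--Pardo inverse semigroup of triples $(\alpha,g,\beta)$ attached to the single-vertex self-similar graph $\tilde{\mathcal{U}}$, and Definition \ref{defn5.1} of pseudo freeness together with Lemma \ref{lem8.4} specialize verbatim to this setting. Since every strongly $E^*$-unitary inverse semigroup is $E^*$-unitary, I would prove the equivalence through the two implications ``not pseudo free $\Rightarrow$ not $E^*$-unitary'' and ``pseudo free $\Rightarrow$ strongly $E^*$-unitary''.

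The first implication is immediate from Lemma \ref{lem8.4}. If $(G,\tilde{\mathcal{U}},\tilde{\varphi})$ is not pseudo free, pick $g\in G\setminus\{1_G\}$ and a path $\delta\in\mathcal{U}^{\geq 1}$ strongly fixed by $g$, and consider $s=(\om,\{v_0\},g,\om)\in\mathcal{S}_{G,\tilde{\mathcal{U}}}$. By Lemma \ref{lem8.4}(1) (with $\alpha=\beta=\om$) the nonzero idempotent $(\delta,\{v_0\},1_G,\delta)$ satisfies $(\delta,\{v_0\},1_G,\delta)\leq s$, whereas $s\notin\mathcal{E}(\mathcal{S}_{G,\tilde{\mathcal{U}}})$ because $g\neq 1_G$. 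Thus $s$ dominates a nonzero idempotent without being idempotent, so $\mathcal{S}_{G,\tilde{\mathcal{U}}}$ is not $E^*$-unitary, hence not strongly $E^*$-unitary.

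For the converse I would exhibit an idempotent-pure prehomomorphism into a group; since every prehomomorphism factors through the universal group $U(\mathcal{S}_{G,\tilde{\mathcal{U}}})$, the existence of one idempotent-pure prehomomorphism forces the canonical map $\sigma:\mathcal{S}_{G,\tilde{\mathcal{U}}}\setminus\{0\}\to U(\mathcal{S}_{G,\tilde{\mathcal{U}}})$ to be idempotent pure as well, which is exactly strong $E^*$-unitarity. The natural target is the group $H$ of the self-similar action, presented as $H=\langle\,\mathcal{U}^1\cup G \mid \text{(relations of $G$)},\ ge=(g\cdot e)\tilde{\varphi}(g,e)\ \ (g\in G,\ e\in\mathcal{U}^1)\,\rangle$, together with the map $\tau(\alpha,g,\beta)=\alpha g\beta^{-1}$. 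That $\tau$ is a prehomomorphism I would verify directly against the three nonzero cases of the multiplication in Definition \ref{defn4.1}, using the extended relation $g\varepsilon=(g\cdot\varepsilon)\tilde{\varphi}(g,\varepsilon)$; for instance, when $\gamma=\beta\varepsilon$ one computes $\alpha g\beta^{-1}\,\gamma h\delta^{-1}=\alpha g\varepsilon h\delta^{-1}=\alpha(g\cdot\varepsilon)\tilde{\varphi}(g,\varepsilon)h\delta^{-1}$, which is precisely $\tau$ applied to the product. Idempotents clearly map to $1_H$, so it remains to establish the reverse inclusion $\tau^{-1}(1_H)\subseteq\mathcal{E}(\mathcal{S}_{G,\tilde{\mathcal{U}}})$.

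This last step is the crux, and it is exactly where pseudo freeness enters. Composing with the length homomorphism $H\to\mathbb{Z}$ sending each edge to $1$ and each group letter to $0$, the relation $\alpha g\beta^{-1}=1_H$ forces $|\alpha|=|\beta|$; the hard part is to upgrade this to $\alpha=\beta$ and $g=1_G$. This amounts to a uniqueness-of-normal-form statement for $H$: that $\alpha g\beta^{-1}=1_H$ implies $g$ strongly fixes the common path, whence pseudo freeness yields $g=1_G$ and then $\alpha=\beta$. I expect this normal-form analysis of $H$ --- equivalently, the faithfulness of the induced action on $\mathcal{U}^\infty$ modulo the strongly-fixed ambiguity --- to be the main obstacle. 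It is precisely the content of \cite[Lemmas 3.1 and 3.2]{far05} in the single-vertex self-similar-group case, so I would either invoke it directly or reprove it by pushing all group letters to the right and reading off the unique reduced word under the pseudo-freeness hypothesis.
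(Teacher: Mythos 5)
The paper gives no proof of this lemma at all: it is imported verbatim from Exel--Starling \cite[Lemmas 3.1 and 3.2]{far05}, so there is nothing internal to compare against. Your reconstruction follows the route of that reference. The reduction to the single-vertex triple semigroup is right, and your backward implication is complete and correct: if $\delta$ is strongly fixed by $g\neq 1_G$, then Lemma \ref{lem8.4}(1) applied to $s=(\om,\{v_0\},g,\om)$ produces a nonzero idempotent $(\delta,\{v_0\},1_G,\delta)\leq s$ with $s$ non-idempotent, so $\mathcal{S}_{G,\tilde{\mathcal{U}}}$ fails even to be $E^*$-unitary. Your reduction of strong $E^*$-unitarity to the existence of \emph{some} idempotent-pure prehomomorphism is also standard and correct (any prehomomorphism factors through $\sigma$, and idempotents always land on $1_H$, so $\sigma^{-1}(1)\subseteq\tau^{-1}(1_H)$).

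The gap is exactly where you flag it. Defining $H$ by generators and relations $ge=(g\cdot e)\tilde{\varphi}(g,e)$ makes the prehomomorphism property of $\tau(\alpha,g,\beta)=\alpha g\beta^{-1}$ a formal computation, but it gives you no control over how much $H$ collapses: a priori the relations could identify $\alpha g\beta^{-1}$ with $1_H$ for non-idempotent triples, and the length grading only yields $|\alpha|=|\beta|$. The entire content of the lemma is the normal-form statement that, under pseudo freeness, $\alpha g\beta^{-1}=1_H$ forces $\alpha=\beta$ and $g=1_G$; you state this but do not prove it, proposing instead to invoke the very lemmas of \cite{far05} being proved. As a self-contained argument this is circular at the crux; as a reconstruction of how the cited result is obtained, it is faithful in outline, and since the paper itself treats the lemma as an external black box, your proposal is no less complete than the paper's own treatment --- but be aware that the ``push all group letters to the right and read off a unique reduced word'' step is the genuinely hard part and would need a real argument (e.g., a faithful action of $H$ on a suitable completion of the path space) to be carried out.
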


Now we prove the result.

\begin{thm}\label{thm10.4}
Let $(G,\mathcal{U},\varphi)$ be a self-similar ultragraph as in Definition $\ref{defn3.4}$. Then the following are equivalent:
\begin{enumerate}
  \item ${\mathcal{S}_{G,\mathcal{U} }}$ is $E^{\ast}$-unitary;
  \item ${\mathcal{S}_{G,\mathcal{U} }}$ is strongly $E^{\ast}$-unitary;
  \item $(G,\mathcal{U},\varphi)$ is pseudo free.
\end{enumerate}
\end{thm}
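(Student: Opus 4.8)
The plan is to prove Theorem \ref{thm10.4} by establishing the cycle of implications $(2)\Rightarrow(1)\Rightarrow(3)\Rightarrow(2)$. The implication $(2)\Rightarrow(1)$ is essentially definitional: as noted just before the theorem, a strongly $E^*$-unitary inverse semigroup is always $E^*$-unitary, since the universal prehomomorphism $\sigma$ being idempotent pure forces any $s$ dominating a nonzero idempotent to satisfy $\sigma(s)=1_{U(\mathcal{S})}$, whence $s\in\mathcal{E}(\mathcal{S}_{G,\mathcal{U}})$. So I would dispose of this step in one line.

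The first substantive step is $(1)\Rightarrow(3)$. I would argue the contrapositive: suppose $(G,\mathcal{U},\varphi)$ is \emph{not} pseudo free, so there exist $g\in G\setminus\{1_G\}$ and a path $\alpha\in\mathcal{U}^{\geq 1}$ strongly fixed by $g$, meaning $g\cdot\alpha=\alpha$ and $\varphi(g,\alpha)=1_G$. The idea is to exhibit a non-idempotent element of $\mathcal{S}_{G,\mathcal{U}}$ that dominates a nonzero idempotent, contradicting $E^*$-unitarity. A natural candidate is $s=(\alpha,A,g,\alpha)$ for a suitable $A\in\mathcal{U}^0$ with $\emptyset\neq A\subseteq s(\alpha)\cap g\cdot s(\alpha)$; since $g\neq 1_G$ this $s$ is not an idempotent (its group coordinate is $g$). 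I would then apply Lemma \ref{lem8.4}(1): taking $\delta=\alpha$ (or rather checking the criterion with the strongly fixed path), one sees that $s$ dominates an idempotent of the form $q_{(\alpha\delta,s(\delta))}$ built from the strongly fixed path, which is nonzero. The only care needed is to choose $A$ so that the relevant range condition $r(\delta)\subseteq A$ holds and the idempotent is genuinely nonzero; taking $A=s(\alpha)\cap(g\cdot s(\alpha))$ and using that $\alpha$ itself (as its own extension) is strongly fixed should suffice. This produces the required contradiction.

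The heart of the proof is $(3)\Rightarrow(2)$, and I expect this to be the main obstacle. The strategy is to reduce to the already-known self-similar \emph{group} case via the collapsed graph $(G,\tilde{\mathcal{U}},\tilde\varphi)$ constructed just before the theorem, for which Lemma \ref{lem7..3} tells us that strong $E^*$-unitarity of $\mathcal{S}_{G,\tilde{\mathcal{U}}}$ is equivalent to pseudo freeness of $(G,\tilde{\mathcal{U}},\tilde\varphi)$. First I would verify that pseudo freeness of $(G,\mathcal{U},\varphi)$ is equivalent to pseudo freeness of $(G,\tilde{\mathcal{U}},\tilde\varphi)$: since collapsing vertices does not change the edge set, the action on edges, or the cocycle $\varphi$, a path is strongly fixed in $\mathcal{U}$ exactly when the corresponding word is strongly fixed in $\tilde{\mathcal{U}}$. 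Then, assuming $(G,\mathcal{U},\varphi)$ is pseudo free, Lemma \ref{lem7..3} yields that $\mathcal{S}_{G,\tilde{\mathcal{U}}}$ is strongly $E^*$-unitary, giving an idempotent pure prehomomorphism $\tilde\sigma:\mathcal{S}_{G,\tilde{\mathcal{U}}}\setminus\{0\}\to U(\mathcal{S}_{G,\tilde{\mathcal{U}}})$.

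To transfer this to $\mathcal{S}_{G,\mathcal{U}}$, I would construct a prehomomorphism $\sigma:\mathcal{S}_{G,\mathcal{U}}\setminus\{0\}\to H$ into a suitable group $H$ (for instance the universal group of $\mathcal{S}_{G,\tilde{\mathcal{U}}}$, or a semidirect-type group built from the free monoid on $\mathcal{U}^1$ and $G$) by sending $(\alpha,A,g,\beta)\mapsto\tilde\sigma(\alpha,s(\alpha),g,\beta)$, i.e.\ by forgetting the generalized-vertex coordinate $A$ and reading off the underlying collapsed-graph data. The key verification is that this map respects the three cases of the multiplication in Definition \ref{defn4.1} whenever the product is nonzero — the $A$-coordinates only affect \emph{whether} a product vanishes, not the $(\alpha,g,\beta)$-shape of the result, so the multiplicativity should follow from that of $\tilde\sigma$. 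Finally I would check idempotent purity: $\sigma(s)=1_H$ forces the collapsed element $(\alpha,s(\alpha),g,\beta)$ to be an idempotent in $\mathcal{S}_{G,\tilde{\mathcal{U}}}$, hence $\alpha=\beta$ and $g=1_G$ by the structure of idempotents, which combined with the constraint $\emptyset\neq A\subseteq s(\alpha)$ forces $s=(\alpha,A,1_G,\alpha)\in\mathcal{E}(\mathcal{S}_{G,\mathcal{U}})$. The delicate point throughout is keeping track of the extra $A$-coordinate and confirming it genuinely plays no role in the group-valued invariant; this bookkeeping, rather than any conceptual difficulty, is where the real work lies.
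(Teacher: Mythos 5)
Your overall architecture --- disposing of $(2)\Rightarrow(1)$ as formal, proving $(1)\Rightarrow(3)$ by exhibiting a non-idempotent dominating a nonzero idempotent, and proving $(3)\Rightarrow(2)$ via the collapsed one-vertex graph $(G,\tilde{\mathcal{U}},\tilde{\varphi})$, Lemma \ref{lem7..3}, and the prehomomorphism obtained by composing the forgetful map $(\alpha,A,g,\beta)\mapsto (Q(\alpha),g,Q(\beta))$ with the universal prehomomorphism of $\mathcal{S}_{G,\tilde{\mathcal{U}}}$ --- is exactly the paper's, and your $(3)\Rightarrow(2)$ argument, including the point that the $A$-coordinate only governs \emph{whether} products vanish and the final idempotent-purity check, matches the paper's proof step for step.

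The gap is in your witness for $(1)\Rightarrow(3)$. If $\alpha$ is strongly fixed by $g\neq 1_G$, the element $s=(\alpha,A,g,\alpha)$ with $\emptyset\neq A\subseteq s(\alpha)$ dominates a nonzero idempotent only when, by Lemma \ref{lem8.4}(1), there exists a strongly fixed path $\delta$ with $r(\delta)\subseteq A\subseteq s(\alpha)$. Your suggestion $\delta=\alpha$ requires $\alpha\alpha$ to be a path of $\mathcal{U}$, i.e.\ $r(\alpha)\subseteq s(\alpha)$, which fails unless $\alpha$ happens to close up into a cycle; and no choice of $A\subseteq s(\alpha)$ can manufacture a strongly fixed path whose range lies inside $s(\alpha)$ --- the hypothesis only hands you one whose range is $r(\alpha)$. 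So in general your $s$ dominates no nonzero idempotent at all and yields no contradiction with $E^*$-unitarity. The repair is to put $\om$ in the outer slots, which is what the paper does: take $s=(\om,r(\alpha),g,\om)$ (the paper writes $(\om,s(\alpha),g,\om)$, but $r(\alpha)$ in the $A$-slot is what makes the product condition $g\cdot r(\varepsilon)\subseteq A$ automatic) and $e=q_{(\alpha,s(\alpha))}$; then $se=e$ follows directly from Definition \ref{defn4.1} because $\alpha=\om\alpha$ and $\alpha$ is strongly fixed, and $E^*$-unitarity forces $g=1_G$. One further caveat you share with the paper: the claim that pseudo freeness of $(G,\mathcal{U},\varphi)$ passes to $(G,\tilde{\mathcal{U}},\tilde{\varphi})$ is not literally a restatement, since $\tilde{\mathcal{U}}^*$ contains words that are not paths of $\mathcal{U}$, and so deserves a word of justification --- but this is not a point where you diverge from the paper's own argument.
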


\begin{proof}
As $(2)\Rightarrow (1)$ holds for all inverse semigroup with zero, it suffices to prove $(1)\Rightarrow (3) \Rightarrow (2)$.

$(1)\Rightarrow (3)$: Let $(g,\alpha)\in G\times {\mathcal{U} }^{\geq 1}$ such that $\varphi(g,\alpha)={{1}_G}$ and $ g\cdot \alpha=\alpha$. If we define $s:=(\om,s(\alpha),g,\om)\in {\mathcal{S}_{G,\mathcal{U} }}$ and $e:=(\alpha,s(\alpha),{{1}_G},\alpha)\in \mathcal{E}({\mathcal{S}_{G,\mathcal{U} }})$, then
\begin{align*}
se&=(\om,s(\alpha),g,\om)(\alpha,s(\alpha),{{1}_G},\alpha) \\
&=(\alpha, (g \cdot s(\alpha))\cap  (\varphi(g,\alpha)\cdot    s(\alpha)), \varphi(g,\alpha), \alpha ) \\
&=(\alpha,g \cdot s(\alpha),1_G,\alpha)\qquad \text{(because $\varphi(g,\alpha)\cdot s(\alpha)\subseteq g\cdot s(\alpha)$ by Definition \ref{defn3.2}(3))} \\
&=(\alpha,s(g \cdot \alpha),1_G,\alpha) \\
&=(\alpha,s(\alpha),{{1}_G},\alpha) \\
&=e.
\end{align*}
So, by statement (1), $s$ must be an  idempotent in ${\mathcal{S}_{G,\mathcal{U} }}$ as well, which means $g={{1}_G}$. Therefore, $(G,\mathcal{U},\varphi)$ is pseudo free.

$(3)\Rightarrow (2)$: Consider the self-similar ultragraph $(G,\tilde{\mathcal{U}},\tilde{\varphi})$ constructed above. Since $(G,\mathcal{U},\varphi)$ is pseudo free, then so is $(G,\tilde{\mathcal{U}},\tilde{\varphi})$, and ${\mathcal{S}_{G,\tilde{\mathcal{U}} }}$ is strongly $E^{\ast}$-unitary by Lemma \ref{lem7..3}. Recall from \cite[Definition 4.1]{bro14} that the inverse semigroup ${\mathcal{S}_{G,\tilde{\mathcal{U}} }}$ is
$${\mathcal{S}_{G,\tilde{\mathcal{U}} }}=\{(\alpha,g,\beta) : g\in G, \; \alpha,\beta\in {\widetilde{\mathcal{U}}}^* \; \mathrm{and} \;  s(\alpha)=g \cdot s(\beta) \},$$
hence we may define $Q : \mathcal{U}^\sharp \rightarrow\tilde{\mathcal{U}}^*$ by
$$Q(\alpha)=
\left\{
  \begin{array}{ll}
    \alpha & \text{if } \alpha\in \mathcal{U}^{\geq 1} \\
    v_0 & \text{if}~ \alpha =\om
  \end{array}
\right.
$$
and the function $\tau  :{\mathcal{S}_{G,\mathcal{U} }}\rightarrow  {\mathcal{S}_{G,\tilde{\mathcal{U} }}}$ by
$$\tau(\alpha,A,g,\beta)=(Q(\alpha),g,Q(\beta)).$$

If $\sigma :{\mathcal{S}_{G,\tilde{\mathcal{U} }}}\rightarrow U({\mathcal{S}_{G,\tilde{\mathcal{U} }}})$ is the standard map from ${\mathcal{S}_{G,\tilde{\mathcal{U} }}}$ to its universal group, then the map
$$\sigma \circ \tau :{\mathcal{S}_{G,{\mathcal{U} }}}\rightarrow U({\mathcal{S}_{G,\tilde{\mathcal{U} }}})$$
is a prehomomorphism. Also, $\sigma$ is idempotent pure because ${\mathcal{S}_{G,\tilde{\mathcal{U} }}}$  is strongly $E^{\ast}$-unitary. Now fix $(\alpha,A,g,\beta)\in {\mathcal{S}_{G,{\mathcal{U} }}}$. If $\sigma \circ \tau(\alpha,A,g,\beta)={{1}_G}$, then this implies that $(Q(\alpha),g,Q(\beta))$ is a nonzero idempotent, thus $Q(\alpha)=Q(\beta)$ and $g={{1}_G}$. Hence $\alpha=\beta$ and $(\alpha,A,1_G,\beta)$ is an idempotent in $\mathcal{S}_{G,\mathcal{U}}$. This follows that $({\sigma  \circ \tau})^{-1}(\{{{1}_G}\})=\mathcal{E}({\mathcal{S}_{G,{\mathcal{U} }}})$, and $\sigma \circ \tau$ is an idempotent pure prehomomorphism. Consequently, ${\mathcal{S}_{G,{\mathcal{U} }}}$ is strongly $E^{\ast}$-unitary completing the proof.
\end{proof}

In the end, we remark that if an inverse semigroup $S$ is $E^*$-unitary, then $\mathcal{G}_{\mathrm{tight}}(S)$ is a Hausdorff groupoid \cite[Corollary 3.17]{exe16}. Hence, Theorem \ref{thm10.4} implies, in particular, that $\mathcal{G}_{\mathrm{tight}}(\mathcal{S}_{G,\mathcal{U}})$ is Hausdorff provided the self-similar ultragraph $(G,\mathcal{U},\varphi)$ is pseudo free. This may be also derived from Theorem \ref{thm8.5}. 


\subsection*{Acknowledgement}
This work was partially supported by Shahid Chamran University of Ahvaz under the grant number SCU.MM1403.279.


\end{document}